\newtheorem{theorem}{Theorem}[section]
\newtheorem{proposition}[theorem]{Proposition}
\newtheorem{lemma}[theorem]{Lemma}
\theoremstyle{definition}
\theoremstyle{remark}
\numberwithin{equation}{section}
\newcommand{\SExt}{\smash{\sqrt{\mathrm{Ext}}}}
\newcommand{\SExtn}{\sqrt{\mathrm{Ext}}}
\newcommand{\Li}{\mathrm{Li}}
\newcommand{\mcg}{\mathrm{Mod}_g}
\newcommand{\mc}{\mathbf{g}}
\renewcommand{\tt}{\mathcal{T}_g}
\newcommand{\qut}{\mathcal{Q}^1\mathcal{T}_g}
\newcommand{\qum}{\mathcal{Q}^1\mathcal{M}_g}
\newcommand{\mf}{\mathcal{MF}_g}
\newcommand{\pmf}{\mathcal{PMF}_g}
\newcommand{\IT}{\Sigma}
\newcommand{\M}{\mathrm{M}}
\newcounter{count}
\newcounter{counterk} 
\newcounter{counterl} 
\newcounter{counterc} 
\newcounter{countercc} 
\newcounter{countere} 
\newcounter{counterd} 
\newcounter{countern} 
\newcounter{counterr}
\begin{document}

\title[Effective mapping class group dynamics III]{Effective mapping class group dynamics III:\\ Counting filling closed curves on surfaces}

\author{Francisco Arana--Herrera}
\address{Department of Mathematics, Stanford University, 450 Jane Stanford Way, Stanford, CA 94305, USA.}
\email{farana@stanford.edu}

\begin{abstract}
	We prove a quantitative estimate with a power saving error term for the number of filling closed geodesics of a given topological type and length $\leq L$ on an arbitrary closed, orientable, negatively curved surface. More generally, we prove estimates of the same kind for the number of free homotopy classes of filling closed curves of a given topological type on a closed, orientable surface whose geometric intersection number with respect to a given filling geodesic current is $\leq L$. The proofs rely on recent progress made on the study of the effective dynamics of the mapping class group on Teichmüller space and the space of closed curves of a closed, orientable surface, and introduce a novel method for addressing counting problems of mapping class group orbits that naturally yields power saving error terms. This method is also applied to study counting problems of mapping class group orbits of Teichmüller space with respect to Thurston's asymmetric metric.
\end{abstract}

\maketitle

%    Text of article.

\thispagestyle{empty}

\tableofcontents

\section{Introduction}

Counting problems for closed geodesics on hyperbolic surfaces have been extensively studied since the 1950s. Huber's prime geodesic theorem \cite{Hu56} can be considered as one of the first major breakthroughs in this subject. According to this theorem, the number $p(X,L)$ of primitive closed geodesics of length $\leq L$ on a closed, orientable hyperbolic surface $X$ admits the following estimate,
\begin{equation}
\label{eq:fil_count}
p(X,L) = \Li\left(e^L\right)  + O_X\left(e^{(1-\kappa) L}\right),
\end{equation}
where the gap $\kappa = \kappa(X) > 0$ depends only on the smallest non-zero eigenvalue of the Laplacian of $X$, and where $\Li \colon [2,+\infty) \to \mathbf{R}$ is the Eulerian logarithmic integral
\[
\Li(x) := \int_2^x \frac{dt}{\log t}.
\]
Let us highlight the fact that $\Li(x)$ is asymptotic to $x/\log(x)$ as $x \to +\infty$. The backbone of Huber's proof, and of many of the subsequent improvements and generalizations due to several authors \cite{He76,Ra78,Sa80}, is Selberg's famous trace formula \cite{Se56}. A proof of an estimate analogous to (\ref{eq:fil_count}) for arbitrary compact, negatively curved Riemannian manifolds was given by Margulis in his thesis \cite{Mar04} using dynamical and geometric arguments.

Neither Huber's nor Margulis's methods can be used to prove an estimate for the number $s(X,L)$ of simple closed geodesics of length $\leq L$ on a closed, orientable hyperbolic surface $X$. This problem, which garnered great interest towards the end of the last century \cite{BS85,MR95a,MR95b,Ri01}, would witness major developments through the work of Mirzakhani. In her thesis \cite{Mir04,Mir08b}, Mirzakhani showed that the following asymptotic estimate holds as $L \to +\infty$,
\begin{equation}
\label{eq:simp_asymp}
s(X,L) \sim n(X) \cdot L^{6g-6},
\end{equation}
where $n(X) > 0$ is a constant depending on the geometry of $X$, and where $g \geq 2$ is the genus of $X$. An important driving force behind Mirzakhani's proof is the ergodicity of the action of the mapping class group on the space of singular measured foliations, a result proved by Masur in \cite{Mas85}. 

The methods introduced in Mirzakhani's thesis did not provide an error term. It would actually take more than 15 years and several developments in Teichmüller dynamics for Eskin, Mirzakhani, and Mohammadi \cite{EMM19} to show that
\begin{equation}
\label{eq:simp_count}
s(X,L) = n(X) \cdot L^{6g-6} + O_X\left(L^{6g-6-\kappa}\right),
\end{equation}
where the gap $\kappa = \kappa(g) > 0$ depends only on the genus of $X$. The main source of effective estimates in the proof of (\ref{eq:simp_count}) is the exponential mixing property of the Teichmüller geodesic flow, a result proved by Avila and Resende \cite{AR12} building on previous work of Avila, Gouëzel, and Yoccoz \cite{AGY06}.

Both the methods introduced in Mirzakhani's thesis and those introduced in her later work with Eskin and Mohammadi can be used to study more refined counting problems of closed geodesics on hyperbolic surfaces. Two closed curves on homeomorphic surfaces are said to have the same topological type if there exists a homeomorphism between the surfaces identifying their free homotopy classes. Given a closed, orientable hyperbolic surface $X$ of genus $g \geq 2$, a closed curve $\beta$ on $X$, and $L > 0$, denote by $c(X,\beta,L)$ the number of closed geodesic on $X$ of the same topological type as $\beta$ and length $\leq L$. In \cite{EMM19}, Eskin, Mirzakhani, and Mohammadi showed that if $\beta$ is simple then
\begin{equation}
\label{eq:simp_type_count}
c(X,\beta,L) = n(X,\beta) \cdot L^{6g-6} + O_{X,\beta}\left(L^{6g-6-\kappa}\right),
\end{equation}
where $n(X,\beta) > 0$ is a constant depending on the geometry of $X$ and the topological type of $\beta$, and where the gap $\kappa = \kappa(g) > 0$ depends only on the genus of $X$. Let us highlight the fact that the proof of this estimate makes crucial use of the assumption that the closed curve $\beta$ is simple.

As the estimates in (\ref{eq:fil_count}), (\ref{eq:simp_asymp}), and (\ref{eq:simp_count}) show, simple closed geodesics account for just a tiny fraction of all primitive closed geodesics of a closed, orientable hyperbolic surface. Furthermore, primitive closed geodesics are generically filling, i.e., all but a quantitatively small number of primitive closed geodesics of a closed, orientable hyperbolic surface cut the surface into discs.

Counting problems for closed geodesics of non-simple topological types had been previously studied by Mirzakhani. A closed curve on a closed, orientable surface is said to be filling if it intersects every homotopically non-trivial closed curve. In \cite{Mir16}, Mirzakhani showed that if $X$ is a closed, orientable hyperbolic surface of genus $g \geq 2$ and $\beta$ is a filling closed curve on $X$, then, asymptotically as $L \to +\infty$,
\begin{equation}
\label{eq:closed_asymp}
c(X,\beta,L) \sim n(X,\beta) \cdot L^{6g-6},
\end{equation}
where $n(X,\beta) > 0$ is a constant depending on the geometry of $X$ and the topological type of $\beta$. As in her thesis, the proof of this estimate is also based on dynamical arguments, but the key player in this case is the ergodicity of the earthquake flow, a result proved earlier by herself in \cite{Mir08a}. An asymptotic estimate analogous to (\ref{eq:closed_asymp}) for closed geodesics of any topological type was later proved by Erlandsson and Souto \cite{ES19} using original arguments introduced in their earlier work \cite{ES16}. Neither Mirzhakani's methods nor the methods of Erlandsson and Souto provide an error term.

The problem of proving a quantitative estimate with a power saving error term for the counting function $c(X,\beta,L)$ in the case where $\beta$ is non-simple has since remained open. This problem was recently advertised by Wright \cite[Problem 18.2]{Wri19}. In this paper we solve this problem for most topological types of closed curves by showing that a quantitative estimate analogous to (\ref{eq:simp_type_count}) holds in the case where $\beta$ is filling. More generally, we prove estimates of the same kind for the number of free homotopy classes of filling closed curves of a given topological type on a closed, orientable surface whose geometric intersection number with respect to a given filling geodesic current is $\leq L$.

In this paper we introduce a novel method for addressing counting problems of this nature which naturally yields power saving error terms. We refer to this method as the tracking method. This method relies on recent progress made in the prequels \cite{Ara20b} and \cite{Ara20c} on the study of the effective dynamics of the mapping class group on Teichmüller space and the space of closed curves of a closed, orientable surface. These recent developments in turn rely on the exponential mixing rate, the hyperbolicity, and the renormalization dynamics of the Teichmüller geodesic flow as their main driving forces. The tracking method can thus be interpreted as a new incarnation of these fundamental ideas in the context of counting problems for closed curves on surfaces.

This method can also be used to study counting problems of mapping class group orbits in other settings. For example, we prove a quantitative estimate with a power saving error term for the number of points in a mapping class group orbit of Teichmüller space that lie within a Thurston asymmetric metric ball of given center and large radius. This estimate effectivizes an asymptotic counting result of Rafi and Souto proved in \cite{RS19}.

\subsection*{Statement of the main theorem.} For the rest of this section we fix an integer $g \geq 2$ and a connected, oriented, closed surface $S_g$ of genus $g$. We refer the reader to \S 2 for a detailed treatment of the technical concepts and terminology that will be used throughout the rest of this section. In the following discussion we will not distinguish between closed curves on $S_g$ and their free homotopy classes. In addition, all metrics considered on $S_g$ will be Riemannian. 

Denote by $\mathcal{C}_g$ the space of geodesic currents on $S_g$. Closed curves and negatively curved metrics on $S_g$ embed naturally into $\mathcal{C}_g$. We refer to the geodesic current $h \in \mathcal{C}_g$ corresponding to a negatively curved metric $h$ on $S_g$ as its Liouville current. Denote by $i(\cdot,\cdot)$ the geometric intersection number pairing on $\mathcal{C}_g$. If $h$ is a negatively curved metric on $S_g$ and $\beta$ is a closed curve on $S_g$ then $i(h,\beta)$ is the length of the unique geodesic representative of $\beta$ with respect to $h$. A geodesic current $\alpha$ on $S_g$ is said to be filling if $i(\alpha,\beta) > 0$ for every non-zero $\beta \in \mathcal{C}_g$.  The Liouville current of any negatively curved metric on $S_g$ is filling. Denote by $\mcg$ the mapping class group of $S_g$. This group acts naturally on $\mathcal{C}_g$ preserving the geometric intersection number pairing.

Let $\alpha \in \mathcal{C}_g$ be a filling geodesic current on $S_g$ and $\beta$ be a closed curve on $S_g$. For every $L > 0$ consider the counting function
\[
c(\alpha,\beta,L) := \#\{ \gamma \in \mcg \cdot \beta  \ | \  i(\alpha,\gamma) \leq L \}.
\]
Notice that if $h \in \mathcal{C}_g$ is the Liouville current of a negatively curved metric $h$ on $S_g$ then $c(h,\beta,L)$ is the number of closed geodesics on $S_g$ with respect to $h$ of the same topological type as $\beta$ and length $\leq L$. Thus, this definition recovers the counting function $c(X,\beta,L)$ introduced above for closed, orientable hyperbolic surfaces $X$ and closed curves $\beta$ as a particular case. The following effective estimate for the counting function $c(\alpha,\beta,L)$ in the case where $\beta$ is filling is the main result of this paper.

\begin{theorem}
	\label{theo:main}
	There exists $\kappa = \kappa(g) > 0$ such that for every filling geodesic current $\alpha \in \mathcal{C}_g$ and every filling closed curve $\beta$ on $S_g$ there exists a constant $n(\alpha,\beta) > 0$ such that for every $L > 0$,
	\[
	c(\alpha,\beta,L) = n(\alpha,\beta) \cdot L^{6g-6} + O_{\alpha,\beta}\left(L^{6g-6-\kappa}\right).
	\]
\end{theorem}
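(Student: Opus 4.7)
The plan is to unfold $c(\alpha,\beta,L)$ as a sum over mapping classes, translate the constraint $i(\alpha,\phi\beta)\le L$ into a geometric condition on a Teichmüller trajectory associated to $\phi$, and then apply the effective mapping class group dynamics established in the prequels \cite{Ara20b,Ara20c} to turn this into a count with power saving error term.

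Since $\beta$ is filling, its $\mcg$-stabilizer is finite, so
\[
c(\alpha,\beta,L) \;=\; \frac{1}{|\mathrm{Stab}_{\mcg}(\beta)|} \cdot \#\{\phi \in \mcg : i(\alpha, \phi\beta) \leq L\}.
\]
The filling assumption on $\alpha$ makes $B_\alpha := \{\mu \in \mathcal{C}_g : i(\alpha,\mu) \leq 1\}$ a compact subset of $\mathcal{C}_g$, so by homogeneity we are counting the $\phi \in \mcg$ for which $\phi\beta$ lies in the dilating compact set $L \cdot B_\alpha$. This reformulates the problem as a quantitative orbit-counting problem for the action $\mcg \curvearrowright \mathcal{C}_g$.

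The heart of the tracking method is to attach to each contributing $\phi$ a distinguished Teichmüller geodesic segment in $\tt$ of length approximately $\log L$, with endpoints near a fixed basepoint $X_0$ and $\phi \cdot X_0$ respectively, and whose forward and backward foliations in $\mf$ are essentially controlled by $\beta$ and $\alpha$. The condition $i(\alpha,\phi\beta) \leq L$ translates via Kerckhoff's extremal length description of the Teichmüller metric into a length bound on such trajectories, so the counting of contributing $\phi$ becomes the counting of certain Teichmüller arcs modulo $\mcg$. The effective equidistribution of $\mcg$-orbits in $\tt$ proved in \cite{Ara20b}, powered by the exponential mixing of the Teichmüller geodesic flow \cite{AGY06,AR12}, handles the counting of these arcs, while the effective equidistribution of $\mcg$-orbits in $\mathcal{C}_g$ from \cite{Ara20c} supplies the quantitative control required to pull the count back to $\mcg \cdot \beta$ rather than to $\mcg \cdot X_0$.

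Pushing these two effective inputs through the tracking reduction should yield the leading term $n(\alpha,\beta)\cdot L^{6g-6}$, with $n(\alpha,\beta)$ expressed as a Thurston-measure of $B_\alpha$ weighted by a Mirzakhani-type constant attached to $\beta$, in agreement with the asymptotic \eqref{eq:closed_asymp} of \cite{Mir16}; the exponent $6g-6 = \dim \mf$ is dictated by the homogeneity of Thurston measure. The main obstacle will be the tracking step itself: unlike in the simple case, $\phi\beta$ cannot be renormalized to a measured lamination directly, so the assignment $\phi \mapsto$ (Teichmüller trajectory) must be uniformly defined and uniformly controlled across all contributing $\phi$, with width estimates sharp enough that counting trajectories recovers counting mapping classes up to an $L^{-\kappa}$ relative error. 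The filling assumption on $\beta$ is precisely what allows this uniformity, playing the role that the ergodicity of the earthquake flow plays in \cite{Mir16}, while replacing that (non-effective) ergodicity by the effective Teichmüller-flow dynamics of \cite{Ara20b,Ara20c} is what ultimately produces the power saving.
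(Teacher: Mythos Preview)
Your outline correctly identifies the overall architecture the paper uses: reduce to counting mapping classes via finite stabilizer, invoke the tracking principle of \cite{Ara20c} to replace $i(\alpha,\phi\beta)$ by Teichm\"uller data, and feed the result into the effective lattice-point counts of \cite{Ara20b}. In that sense you are on the same track as the paper.

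However, the proposal glosses over the mechanism by which the tracking principle is converted into an actual count with power saving, and this is where most of the work in the paper lies. The tracking estimate has the form $i(\alpha,\phi\beta)\approx i(\alpha,\Re(q_s))\cdot i(\beta,\Re(q_s(Y,\phi^{-1}X)))\cdot e^{r}$, so the coefficient in front of $e^r$ \emph{varies with $\phi$} through the directions $[\Re(q_s)]$ and $[\Re(q_s(Y,\phi^{-1}X))]$ in $\pmf$. A plain ball count in $\tt$ (which is what your sketch suggests) cannot see this variation. The paper instead partitions $\pmf\times\pmf$ into pairs of Dehn--Thurston simplices of diameter $\delta$, applies the \emph{bisector} counting theorem of \cite{Ara20b} on each piece, and then optimizes $\delta=L^{-\eta}$ to balance the oscillation of the coefficient against the number of error terms. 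This localization-and-summation step is the actual engine of the proof and is absent from your plan.

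The second gap is more serious: for the partition argument to yield a power saving, one needs that the maps $[\lambda]\mapsto i(\alpha,E_X([\lambda]))$ and $[\lambda]\mapsto \SExt_X(\lambda)$ are \emph{Lipschitz} on $\pmf$ in Dehn--Thurston coordinates, with constants uniform over compacta. These regularity statements (Theorems~\ref{theo:curr_lip}, \ref{theo:ext_lip}, \ref{theo:ext_lip_2}) are proved from scratch in \S4 of the paper via convexity in train-track charts; they are not available in the prequels and are not captured by the phrase ``width estimates sharp enough.'' Without them the diameter-$\delta$ partition does not control the coefficient variation, and the method stalls. Your sketch should flag this as a separate, nontrivial ingredient rather than folding it into the tracking step.
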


A version of Theorem \ref{theo:main} with a more explicit description of the leading term and a more precise control of the implicit constant will be introduced in \S5 as Theorem \ref{theo:main_strong}. A particular application of Theorem \ref{theo:main} solves the open problem of proving a quantitative estimate with a power saving error term for the number $c(X,\beta,L)$ of closed geodesics of the same topological type as $\beta$  and length $\leq L$ on a  closed, orientable hyperbolic surface $X$ in the generic case where $\beta$ is filling.

A stronger version of Theorem \ref{theo:main} for counting functions of non-connected filling closed multi-curves on closed, orientable surfaces will be introduced in \S 5 as Theorem \ref{theo:main_multi}. A particular application of Theorem \ref{theo:main_multi} effectivizes asymptotic estimates of the author \cite{A20} for counting functions of filling closed multi-geodesics on closed, orientable hyperbolic surfaces that keep track of the lengths of the individual components of the multi-geodesics.

\subsection*{Counting with respect to Thurston's asymmetric metric.} The techniques introduced in the proof of Theorem \ref{theo:main} can also be applied to study counting problems of mapping class group orbits in other settings. Denote by $\mathcal{T}_g$ the Teichmüller space of marked hyperbolic structures on $S_g$. Thurston's asymmetric metric quantifies the minimal Lipschitz constant among Lipschitz maps between marked hyperbolic structures on $S_g$. More precisely, for every $X,Y \in \mathcal{T}_g$,
\[
d_\mathrm{Thu}(X,Y) := \log\left( \inf_{f \colon X \to Y} \mathrm{Lip}(f)\right),
\]
where the infimum runs over all Lipschitz maps $f \colon X \to Y$ in the homotopy class given by the markings of $X$ and $Y$, and where $\mathrm{Lip}(f)$ denotes the Lipschitz constant of such maps. Denote by $A_R(X) \subseteq \mathcal{T}_g$ the closed ball of radius $R > 0$ centered at $X \in \mathcal{T}_g$ with respect to Thurston's asymmetric metric.

The mapping class group $\mcg$ acts naturally on $\mathcal{T}_g$ by changing the markings. This action preserves Thurston's asymmetric metric. In \cite{RS19}, Rafi and Souto proved an asymptotic estimate for the number of points in a mapping class group orbit of Teichmüller space that lie within a Thurston asymmetric metric ball of given center and large radius. More precisely, Rafi and Souto showed that for every $X,Y \in \mathcal{T}_g$ the following asymptotic estimate holds as $R \to +\infty$,
\begin{equation}
\label{eq:asymp_thu}
\#\left(\mcg \cdot Y \cap A_R(X)\right) \sim C(X,Y) \cdot e^{(6g-6)R},
\end{equation}
where $C(X,Y) > 0$ is a constant depending on the geometry of $X$ and $Y$. Using the techniques introduced in the proof of Theorem \ref{theo:main} we prove the following effectivization of (\ref{eq:asymp_thu}). 

\begin{theorem}
	\label{theo:main_3}
	There exists a constant $\kappa = \kappa(g) > 0$ such that for every pair of marked hyperbolic structures $X,Y \in \mathcal{T}_g$ there exists a constant $C(X,Y) > 0$ such that for every $R > 0$,
	\[
	\#\left(\mcg \cdot Y \cap A_R(X)\right) = C(X,Y) \cdot e^{(6g-6)R} + O_{X,Y}\left(e^{(6g-6-\kappa)R} \right).
	\]
\end{theorem}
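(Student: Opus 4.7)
I plan to prove Theorem~\ref{theo:main_3} via the tracking method used for Theorem~\ref{theo:main}, adapted to the Thurston metric setting. The first step is to recast the count in measured lamination terms. Thurston's formula for his asymmetric metric states
\[
d_\mathrm{Thu}(X,Y) = \log \sup_{\lambda \in \ml \setminus \{0\}} \frac{i(L_Y, \lambda)}{i(L_X, \lambda)},
\]
where $L_X, L_Y \in \mathcal{C}_g$ are the Liouville currents of $X, Y$ (so that $i(L_Z, \lambda) = \ell_Z(\lambda)$ by Bonahon). Combined with the $\mcg$-equivariance of $d_\mathrm{Thu}$, this converts the condition $\phi \cdot Y \in A_R(X)$ for $\phi \in \mcg$ into the sup-condition $i(L_Y, \phi^{-1}\lambda) \leq e^R \cdot i(L_X, \lambda)$ for every $\lambda \in \ml$. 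Using the homogeneity of intersection number, rescaling $\lambda$ by $e^{-R}$, and invoking $\mcg$-equivariance of $i$, this becomes a count of orbit points of $\mcg \cdot L_Y$ meeting a region of the current space whose shape, up to a factor of $e^{-R}$, depends only on $X$ and $Y$.

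I would then invoke the effective equidistribution of the $\mcg$-orbit of $L_Y$ in the space of currents with respect to Thurston's measure on $\ml$, as established in the prequel \cite{Ara20c}. Applied to a test function encoding the (rescaled) Thurston ball, this equidistribution outputs the leading term $C(X,Y) \cdot e^{(6g-6)R}$---the factor $e^{(6g-6)R}$ being the Jacobian of the $e^{-R}$-rescaling on the $(6g-6)$-dimensional space $\ml$---together with a power saving error term $O_{X,Y}(e^{(6g-6-\kappa)R})$ inherited from the effective equidistribution.

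The main obstacle is the non-smooth geometry of Thurston balls in the space of measured laminations. Unlike the convex level sets of a single intersection number that underlie Theorem~\ref{theo:main}, the rescaled Thurston ball is cut out by a supremum of linear conditions on $\ml$, one for each ergodic measured lamination, and hence has a piecewise-linear boundary that is rough at many directions at once. To preserve a power saving I would approximate the indicator of the rescaled ball from above and below by smooth test functions at a scale $e^{-\eta R}$ for some small $\eta > 0$, apply the effective equidistribution to each smoothed function, and optimize $\eta$ so as to balance the smoothing error (controlled by the regularity of Thurston's measure near the boundary of the ball, via a transversality/Lipschitz argument for the sup-condition) against the dynamical error from the equidistribution, thereby extracting the final gap $\kappa = \kappa(g) > 0$. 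A separate truncation would control the contribution of laminations too close to the zero lamination, where the sup-condition becomes degenerate.
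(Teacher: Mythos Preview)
Your approach is genuinely different from the paper's, and there is a gap in the key input you invoke.

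\textbf{What the paper does.} The paper does not pass through any effective equidistribution statement for $\mcg$-orbits in the space of currents. Instead it uses a tracking principle specific to Thurston's metric (Theorem~\ref{theo:thurston_track}, proved in \cite{Ara20c}): for all but a sparse set of $\mathbf g\in\mcg$,
\[
d_\mathrm{Thu}(X,\mathbf g.Y)=d_\mathcal T(X,\mathbf g.Y)+\log D_X(\Re(q_s))+\log \ell_{\Im(q_e)}(\mathbf g.Y)+O_{\mathcal K}(e^{-\kappa r}),
\]
where $D_X(\lambda):=\sup_{\beta\in\mathcal S_g} i(\beta,\lambda)/\ell_\beta(X)$. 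This reduces the Thurston-ball count to a Teichm\"uller bisector count $N(X,Y,U,V,R)$ exactly as in \S5, with $D_X$ playing the role of $i(\alpha,\cdot)$ and $\ell_{\cdot}(Y)$ playing the role of $i(\beta,\cdot)$. The Lipschitz regularity needed to control the variation of these correction terms across small $F$-simplices is supplied by Theorems~\ref{theo:d_convex} and \ref{theo:d_lip} (convexity of $D_X$ in train-track charts, hence Lipschitz in Dehn--Thurston coordinates), together with Theorem~\ref{theo:curr_lip} for the length functional. Theorem~\ref{theo:bisect_count} then gives the bisector count with a power saving, and the same partition-and-sum argument as in the proof of Theorem~\ref{theo:main_strong} yields Theorem~\ref{theo:main_3_strong}.

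\textbf{The gap in your plan.} You invoke ``effective equidistribution of the $\mcg$-orbit of $L_Y$ in the space of currents with respect to Thurston's measure on $\ml$, as established in the prequel \cite{Ara20c}.'' That is not what \cite{Ara20c} contains: the results quoted from \cite{Ara20c} in this paper are the tracking principles (Theorems~\ref{theo:track} and \ref{theo:thurston_track}), not an effective orbit-equidistribution theorem for currents against general test functions. The tracking method of this paper is precisely the machinery one would need to \emph{prove} such an effective equidistribution statement; treating it as a black box skips the actual content. Moreover, even granting such a statement, making it effective against the indicator of $\{D_X\le 1\}$ still requires exactly the regularity input the paper proves---that $D_X$ is Lipschitz in Dehn--Thurston coordinates (Theorem~\ref{theo:d_lip})---which your outline gestures at but does not supply. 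In short: your plan would work if the black box existed, but the paper's contribution is to build that box, and it does so via the bisector route rather than via a general equidistribution theorem.
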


A version of Theorem \ref{theo:main_3} with a more explicit description of the leading term and a more precise control of the implicit constant will be introduced in \S6 as Theorem \ref{theo:main_3_strong}.

\subsection*{The tracking method.} The proof of Theorem \ref{theo:main} introduces a novel method for addressing counting problems of mapping class group orbits of filling, closed curves on closed, orientable surfaces which naturally yields power saving error terms. We refer to this method as the tracking method. At the heart of this method is the tracking principle proved in the prequel \cite{Ara20c}. According to this principle, the action of the mapping class group on the space of closed curves of a closed, orientable surface effectively tracks the corresponding action on Teichmüller space in the following sense: for all but quantitatively few mapping classes, the information of how a mapping class moves a given point of Teichmüller space determines, up to a power saving error term, how it changes the geometric intersection numbers of a given closed curve with respect to arbitrary geodesic currents. See Theorem \ref{theo:track} for a precise statement of this principle. The main driving force behind this principle is the renormalization dynamics of the Teichmüller geodesic flow, in particular, the functional analytic approach to renormalization introduced by Forni \cite{F02} and developed by Athreya and Forni \cite{AF08}.

A careful application of the tracking principle reduces the original counting problem for mapping class group orbits of filling closed curves on closed, orientable surfaces to a counting problem for mapping class group orbits of Teichmüller space. This represents a significant advantage because the tools available for studying such counting problems on Teichmüller space are much more robust than those available in the original setting. A careful application of the effective bisector counting estimates for mapping class group orbits of Teichmüller space proved in the prequel \cite{Ara20b} leads to a solution of the reduced counting problem. See Theorem \ref{theo:bisect_count} for a precise statement of the estimates used in the proof. These estimates are inspired by ideas introduced by Margulis in his thesis \cite{Mar04}, build on previous work of Athreya, Bufetov, Eskin, and Mirzakhani \cite{ABEM12}, and rely on the exponential mixing property of the Teichmüller geodesic flow \cite{AGY06,AR12} in a crucial way. 

To ensure the estimates considered throughout the proof yield a power saving error term in the end, we study the regularity of certain functions of interest on the space of singular measured foliations of a closed, orientable surface. More specifically, we show that the geometric intersection number with respect to a given geodesic current and the extremal length with respect to a given marked complex structure define Lipschitz functions on the space of singular measured foliations when parametrized using Dehn-Thurston coordinates. See Theorems \ref{theo:curr_lip} and \ref{theo:ext_lip} for precise statements.

\subsection*{Non-filling closed curves.} The importance in the proof of Theorem \ref{theo:main} of the assumption that the closed curve $\beta$ is filling is twofold. On one hand, this assumption guarantees that the stabilizer of $\beta$ with respect to the mapping class group action is finite, so counting closed curves in the mapping class group orbit of $\beta$ is equivalent to counting mapping classes in terms of how they act on $\beta$. On the other hand, this assumption ensures that $\beta$ gets streched at an exponential rate along any Teichmüller geodesic flow orbit, so the tracking principle described above can be applied directly. The author believes the techniques introduced in this paper should serve as a good starting point for a proof of a version of Theorem \ref{theo:main} that holds in the general case where the closed curve $\beta$ is not filling.

\subsection*{Finite index subgroups of the mapping class group.} The main results of this paper, Theorems \ref{theo:main} and \ref{theo:main_3}, and their stronger counterparts, Theorems \ref{theo:main_strong}, \ref{theo:main_multi}, and \ref{theo:main_3_strong}, also hold when the corresponding counting functions are defined using an arbitrary finite index subgroup of the mapping class group. This follows from the fact that the bisector counting estimates for mapping class group orbits of Teichmüller space used in the proofs of these results also hold for arbitrary finite index subgroups of the mapping class group, as explained in the prequel \cite{Ara20b}.

\subsection*{Organization of the paper.} In \S 2 we cover the background material and set up the notation that will be used throughout the rest of this paper. In \S 3 we discuss the main results of the prequels \cite{Ara20b} and \cite{Ara20c} concerning the effective dynamics of the mapping class group on Teichmüller space and the space of closed curves of a closed, orientable surface. In \S 4 we study the regularity of geometric intersection numbers and extremal lengths on the space of singular measured foliations of a closed, orientable surface. In \S 5 we use the tracking method to prove Theorems \ref{theo:main}. In \S 6 we discuss how to apply the tracking method to prove Theorem \ref{theo:main_3}.

\subsection*{Acknowledgments.} The author is very grateful to Alex Wright and Steve Kerckhoff for their invaluable advice, patience, and encouragement. The author would especially like to thank Alex Eskin for very influential conversations during the early stages of this project. The author would also like to thank Jayadev Athreya, Jenya Sapir, and Benjamin Dozier for very enlightening conversations. This work got started while the author was participating in the ``Dynamics: Topology and Numbers" trimester program at the Hausdorff Research Institute for Mathematics (HIM). The author is very grateful for the HIM's hospitality and for the hard work of the organizers of the trimester program.

\section{Background material}

\subsection*{Outline of this section.} In this section we cover the background material and set up the notation that will be used throughout the rest of this paper. We begin with a quick overview of the Teichmüller metric and its relation to the Teichmüller geodesic flow. We then cover some aspects of the theory of singular measured foliations and their different coordinate systems. We end this section with a brief discussion of extremal lengths and geodesic currents.

\subsection*{The Teichmüller metric.} For the rest of this section we fix an integer $g \geq 2$ and a connected, oriented, closed surface $S_g$ of genus $g$. Denote by $\mathcal{T}_g$ the Teichmüller space of marked complex structures on $S_g$ and by $\mcg$ the mapping class group of $S_g$. As explained in \S 1, the proof of Theorem \ref{theo:main}, the main result of this paper, relies on the dynamics of the marking changing action of the mapping class group on Teichmüller space in a crucial way. In \S 3 we provide a quantitative description of these dynamics in terms of the Teichmüller metric. See Theorems \ref{theo:teich_count} and \ref{theo:bisect_count}. 

The Teichmüller metric $d_\mathcal{T}$ on $\mathcal{T}_g$ quantifies the minimal dilation among quasiconformal maps between marked complex structures on $S_g$. More precisely, for every $X,Y \in \mathcal{T}_g$,
\[
d_\mathcal{T}(X,Y) := \log\left( \inf_{f \colon X \to Y} K(f)\right),
\]
where the infimum runs over all quasiconformal maps $f \colon X \to Y$ in the homotopy class given by the markings of $X$ and $Y$, and where $K(f)$ denotes the dilation of such maps. See \cite[Chapter 11]{FM11} for a more detailed definition. The action of the mapping class group on Teichmüller space preserves the Teichmüller metric. The Teichmüller metric is complete and its geodesics can be described explicitly in terms of a natural flow on the unit cotangent bundle of Teichmüller space, as we now explain.

\subsection*{The Teichmüller geodesic flow.} Let $X$ be a closed Riemann surface and $K$ be its canonical bundle. A quadratic differential $q$ on $X$ is a holomorphic section of the symmetric square $K \vee K$. In local coordinates, $q = f(z) \thinspace dz^2$ for some holomorphic function $f(z)$. If $X$ has genus $g$, the number of zeroes of $q$ counted with multiplicity is $4g-4$. The zeroes of $q$ are also called singularities. We sometimes denote quadratic differentials by $(X,q)$ to keep track of the Riemann surface they are defined on.

A half-translation structure on a surface $S$ is an atlas of charts to $\mathbf{C}$ on the complement of a finite set of points $\Sigma \subseteq S$ whose transition functions are of the form $z \mapsto \pm z + c$ with $c \in \mathbf{C}$. Every quadratic differential $q$ gives rise to a half-translation structure on the Riemann surface it is defined on by considering local coordinates on the complement of the zeroes of $q$ for which $q = dz^2$. Viceversa, every half-translation structure induces a quadratic differential on its underlying surface by pulling back the differential $dz^2$ on the corresponding charts. The area of a quadratic differential is the total Euclidean area of the corresponding half-translation structure.

Pulling back the measured foliations corresponding to the $1$-forms $dx$ and $dy$ on $\mathbf{C}$ using the charts of a half-translation structure induces a pair of singular measured foliation on the underlying surface. For the half translation structure induced by a quadratic differential $q$, we denote these singular measured foliations by $\Re(q)$ and $\Im(q)$, and refer to them as the vertical and horizontal foliations of $q$. 

The group $\mathrm{SL}(2,\mathbf{R})$ acts naturally on half-translation structures by postcomposing the corresponding charts with the linear action of $\mathrm{SL}(2,\mathbf{R})$ on $\mathbf{C} =\mathbf{R}^2$. In particular, $\mathrm{SL}(2,\mathbf{R})$ acts naturally on quadratic differentials preserving their area. The Teichmüller geodesic flow on quadratic differentials corresponds to the action of the diagonal subgroup $\{a_t\}_{t \in \mathbf{R}} \subseteq \mathrm{SL}(2,\mathbf{R})$ given by
\begin{equation*}
%\label{eq:diag}
a_t := \left( 
\begin{array}{c c}
e^t & 0 \\
0 & e^{-t}
\end{array} 
\right).
\end{equation*}

Denote by $\qut$ the Teichmüller space of marked unit area quadratic differentials on $S_g$. This space can be canonically identified with the unit cotangent bundle of $\mathcal{T}_g$ by considering the natural projection $\pi \colon \qut \to \mathcal{T}_g$ which maps every marked unit area quadratic differential on $S_g$ to its underlying marked complex structure. The natural $\mathrm{SL}(2,\mathbf{R})$ action on quadratic differentials induces a corresponding action on $\qut$. The unit speed geodesics of the Teichmüller metric are precisely the projection to $\mathcal{T}_g$ of the unit speed orbits of the Teichmüller geodesic flow on $\qut$.

\subsection*{The Masur-Veech measure.} Via its canonical identification with the unit cotangent bundle of $\mathcal{T}_g$, the space $\qut$ inherits a natural measure $\mu_{\mathrm{mv}}$ known as the Masur-Veech measure. We normalize this measure in the same way as \cite[\S 2]{Ara20b}. The natural properly discontinuous marking changing action of the mapping class group on $\qut$ preserves the Masur-Veech measure. Denote by $\widehat{\mu}_{\mathrm{mv}}$ local pushforward of $\mu$ to the quotient $\qum := \qut/\mcg$. Independent works of Masur \cite{Mas82} and Veech \cite{Ve82} guarantee this measure is finite. Throughout this paper we consider the constant
\begin{equation}
\label{eq:bg}
b_g := (6g-6) \cdot \widehat{\mu}_{\mathrm{mv}}\left(\qum\right).
\end{equation}

\subsection*{The boundary at infinity of Teichmüller space.} A crucial step in the proof of Theorem \ref{theo:main}, the main result of this paper, consists in appropriately localizing certain counting functions of mapping class group orbits of Teichmüller space in terms of information on the boundary at infinity. Denote by $\mf$ the space of singular measured foliations on $S_g$. Weighted homotopy classes of simple closed curves on $S_g$ embed densely into $\mf$ \cite[Proposition 6.18]{FLP12}. The space $\mf$ can be endowed with a natural $\mathbf{R}^+$ action which scales transverse measures. Denote by $\pmf$ the space of projective singular measured foliations on $S_g$. This space is compact \cite[Theorem 6.15]{FLP12}. Given a singular measured foliation $\lambda \in \mf$ denote by $[\lambda] \in \pmf$ its projective class. 

The space $\pmf$ can be interpreted as the boundary at infinity of $\mathcal{T}_g$ in the following way. The projective classes $[\Re(q)], [\Im(q)] \in \pmf$ of the horizontal and vertical foliations $\Re(q), \Im(q) \in \mf$ of a marked unit area quadratic differential $q \in \qut$ correspond to the forwards and backwards endpoints on the boundary at infinity of the Teichmüller geodesic generated by $q$. Furthermore, by work of Hubbard and Masur \cite{HM79}, every pair of projective singular measured foliations satisfying a natural transversality condition corresponds to a unique $q \in \qut$ in this way. The same work of Hubbard and Masur also shows that for every $X \in \mathcal{T}_g$ and every projective singular measured foliation $[\lambda] \in \pmf$ there exists a unique $q \in \pi^{-1}(X)$ such that $[\Re(q)] = [\lambda]$.

Denote by $\Delta \subseteq \mathcal{T}_g \times \mathcal{T}_g$ the diagonal of $\mathcal{T}_g \times \mathcal{T}_g$ and by $S(X) := \pi^{-1}(X)$ the fiber of the projection $\pi \colon \qut \to \tt$ above $X \in \mathcal{T}_g$. Consider the maps $q_s,q_e \colon \mathcal{T}_g \times \mathcal{T}_g - \Delta \to \mathcal{Q}^1\mathcal{T}_g$ which to every pair of distinct points $X \neq Y \in \mathcal{T}_g$ assign the marked quadratic differentials $q_s(X,Y) \in S(X)$ and $q_e(X,Y) \in S(Y)$ corresponding to the cotangent directions at $X$ and $Y$ of the unique Teichmüller geodesic segment from $X$ to $Y$. For $X \neq Y \in \mathcal{T}_g$, if $r := d_\mathcal{T}(X,Y) > 0$, then $q_e(X,Y) = a_r.q_s(X,Y)$. The localization procedure used in the proof of Theorem \ref{theo:main} is performed in terms of sectors of Teichmüller space defined in the following way. Given $X \in \mathcal{T}_g$ and $U \subseteq \pmf$ consider the sector
\[
\mathrm{Sect}_U(X) := \{Y \in \mathcal{T}_g \setminus \{X\} \ | \ [\Re(q_s(X,Y))] \in U\}.
\]

\subsection*{Coordinate systems on the space of singular measured foliations.} It will be convenient for our purposes to consider two different coordinates systems on the space of singular measured foliations: train track coordinates and Dehn-Thurston coordinates. We now give a quick overview of the fundamental aspects of these coordinate systems. For more details we refer the reader to \cite{PH92}.

Let us begin by describing train track coordinates. Train track coordinates have the advantage of being quite flexible in their construction and appearing rather naturally in the study of related subjects. A train track $\tau$ on $S_g$ is an embedded $1$-complex satisfying the following conditions:
\begin{enumerate}
	\item Each edge of $\tau$ is a smooth path with a well defined tangent vector at each endpoint. All edges at a given vertex are tangent.
	\item For each component $R$ of $S_g \setminus \tau$, the double of $R$ along the smooth part of its boundary $\partial R$ has negative Euler characteristic.
\end{enumerate}
The vertices of a train track where three or more edges meet are called switches. Without loss of generality we assume all the vertices of the train tracks considered are switches. By considering the inward pointing tangent vectors of the edges incident to a switch, one can divide these edges into incoming and outgoing edges. A train track $\tau$ on $S_g$ is said to be maximal if all the components of $S_g \setminus \tau$ are trigons, i.e., interiors of discs with three non-smooth points on their boundaries.

A singular measured foliation $\lambda \in \mf$ is said to be carried by a train track $\tau$ on $S_g$ if it can be obtained by collapsing the complementary regions in $S_g$ of a measured foliation of a tubular neighborhood of $\tau$ whose leaves run parallel to the edges of $\tau$. In this situation, the invariant transverse measure of $\lambda$ corresponds to a counting measure $v$ on the edges of $\tau$ satisfying the switch conditions: at every switch of $\tau$ the sum of the measures of the incoming edges equals the sum of the measures of the outgoing edges. Every $\lambda \in \mf$ is carried by some maximal train track $\tau$ on $S_g$. 

Let $\tau$ be a maximal train track on $S_g$. Denote by $U(\tau) \subseteq \mf$ the cone of singular measured foliations on $S_g$ carried by $\tau$ and by $V(\tau) \subseteq \smash{(\mathbf{R}_{\geq0})^{18g-18}}$ the $6g-6$ dimensional cone of non-negative counting measures on the edges of $\tau$ satisfying the switch conditions. These cones can be identified through a natural $\mathbf{R}^+$-equivariant bijection $\Phi_\tau \colon U(\tau) \to V(\tau)$ we refer to as the train track chart induced by $\tau$ on $\mf$. These charts give rise to coordinates on $\mf$ called train track coordinates. The transition maps of these coordinates are piecewise integral linear. Thus, these coordinates can be used to endow $\mf$ with a natural $6g-6$ dimensional piecewise integral linear structure. The Thurston measure $\nu_\mathrm{Thu}$ is the unique, up to scaling, piecewise integral linear measure on $\mf$. We normalize this measure in the same way as \cite[\S 2]{Ara20b}.

Dehn-Thurston coordinates parametrize $\mf$ in terms of intersection and twisting numbers with respect to the components of a pair of pants decomposition of $S_g$. Consider the piecewise linear manifold $\IT:= \mathbf{R}^2 / \langle-1\rangle$ endowed with the quotient Euclidean metric. The product $\smash{\IT^{3g-3}}$ is a piecewise linear manifold which we endow with the $L^2$ product metric. Every set of Dehn-Thurston coordinates provides an $\mathbf{R}^+$-equivariant homeomorphism $F \colon \mf \to \smash{\IT^{3g-3}}$. The change of coordinates maps between Dehn-Thurston coordinates and train track coordinates are piecewise linear. Although Dehn-Thurston coordinates are rather restricted in their construction they have the advantage of providing global piecewise linear parametrizations of $\mf$.

\subsection*{Extremal lengths.} An important aspect of the proof of Theorem \ref{theo:main}, the main result of this paper, is controling the regularity of the codimension-$1$ subset of $\mf$ of all horizontal foliations $\Re(q)$ of marked unit area quadratic differentials $q \in \qut$ that lie over a given marked complex structure $X \in \mathcal{T}_g$. The regularity of this subset is studied using the notion of extremal length.

Let $X \in \mathcal{T}_g$ be a marked complex structure on $S_g$. Fix a Riemannian metric $h$ on $S_g$ in the conformal class of $X$. Denote by $dA_h$ and $ds_h$ the area and length elements of $h$. Every metric on $S_g$ in the conformal class of $X$ can be obtained as a product $\rho h$ for some non-negative measurable function $\rho \colon S_g \to \mathbf{R}_{\geq 0}$. For any such function define the total area of the metric $\rho h$ as
\[
\mathrm{Area}(\rho h) := \int_X \rho^2 \thinspace dA_h.
\]
Let $\gamma$ be a simple closed curve on $S_g$. For every non-negative measurable function $\rho \colon S_g \to \mathbf{R}_{\geq 0}$ define
\[
\ell_{\rho h}(\gamma) := \inf_{\gamma' \sim \gamma} \int_{\gamma'} \rho \thinspace ds_h,
\]
where the infimum runs over all simple closed curves $\gamma'$ on $S_g$ homotopic to $\gamma$. The square root of the extremal length of $\gamma$ with respect to $X$ is defined as
\[
\SExt_X(\gamma) := \sup_{\rho} \frac{\ell_{\rho h}(\gamma)}{\sqrt{\mathrm{Area}(\rho h)}},
\]
where the supremum runs over all non-negative measurable functions $\rho \colon S_g \to \mathbf{R}_{\geq 0}$ such that $0 < \mathrm{Area}(\rho h)<+\infty$. By work of Kerckhoff \cite{Ker80}, there exists a unique continuous, homogeneous extension of $\SExt_X$ to a function $\SExt_X \colon \mf \to \mathbf{R}$ on the space of singular measured foliations. Furthermore, this extension satisfies the following crucial identify for every $q \in \qut$,
\[
\SExt_{\pi(q)}(\Re(q)) = \mathrm{Area}(q) = 1.
\]

In the context of the study of counting problems for mapping class group orbits of Teichmüller space, Athreya, Bufetov, Eskin, and Mirzakhani \cite{ABEM12} introduced the Hubbard-Masur function $\Lambda \colon \mathcal{T}_g \to \mathbf{R}^+$ which to every marked complex structure $X \in \mathcal{T}_g$ assigns the positive value
\[
\Lambda(X) := \nu_{\mathrm{Thu}}\left(\{\lambda \in \mf \ | \ \SExt_X(\lambda) \leq 1 \} \right).
\]
Combining results of Dumas \cite{Du15} with Gardiner’s variational formula \cite{Ga84}, Mirzakhani showed the function $\Lambda \colon \mathcal{T}_g \to \mathbf{R}^+$ is constant \cite[Theorem 5.10]{Du15}. Following the convention in \cite{Ara20b}, we refer to the value of this function as the Hubbard-Masur constant and denote it by
\begin{equation}
\label{eq:hmc}
\Lambda_g := \Lambda(X), \ \forall X \in \mathcal{T}_g.
\end{equation}

\subsection*{Geodesic currents.} In \cite{Bon88}, Bonahon gave a unified treatment of several seemingly unrelated notions of length for closed curves on closed, orientable surfaces using the concept of geodesic currents. Theorem \ref{theo:main}, the main result of this paper, is stated using this general notion.

To define geodesic currents let us endow the surface $S_g$ with an auxiliary hyperbolic metric. The projective tangent bundle $PTS_g$ admits a $1$-dimensional foliation by lifts of geodesics on $S_g$. A geodesic current on $S_g$ is a Radon transverse measure of the geodesic foliation of $PTS_g$. Equivalently, a geodesic current on $S_g$ is a $\pi_1(S_g)$-invariant Radon measure on the space of unoriented geodesics of the universal cover of $S_g$. Endow the space of geodesic currents on $S_g$ with the weak-$\star$ topology. Different choices of auxiliary hyperbolic metrics on $S_g$ yield canonically identified spaces of geodesic currents \cite[Fact 1]{Bon88}. Denote the space of geodesic currents on $S_g$ by $\mathcal{C}_g$. This space supports a natural $\mathbf{R}^+$ scaling action and a natural $\mcg$ action \cite[\S 2]{RS19}. Denote by  $\mathcal{PC}_g$ the space of projective geodesic currents on $S_g$. This space is compact \cite[Corollary 5]{Bon88}.

Homotopy classes of weighted closed curves on $S_g$ embed into $\mathcal{C}_g$ by considering their geodesic representatives with respect to any auxiliary hyperbolic metric. By work of Bonahon \cite[Proposition 2]{Bon88}, this embedding is dense. Moreover, the geometric intersection number pairing for closed curves on $S_g$ extends in a unique way to a continuous, symmetric, bilinear pairing $i(\cdot,\cdot)$ on $\mathcal{C}_g$ \cite[Proposition 3]{Bon88}. This pairing is invariant with respect to the diagonal action of $\mcg$ on $\mathcal{C}_g \times \mathcal{C}_g$.

Many different metrics on $S_g$ embed into $\mathcal{C}_g$ in such a way that the geometric intersection number of the metric with any closed curve is equal to the length of the geodesic representatives of the closed curve with respect to the metric. We refer to the geodesic current corresponding to any such metric as its Liouville current. Examples of metrics admitting Liouville currents include:
\renewcommand{\labelitemi}{\textperiodcentered}
\begin{itemize}
	\item Hyperbolic metrics \cite{Bon88},
	\item Negatively curved Riemannian metrics \cite{O90},
	\item Negatively curved Riemannian metrics with cone singularities of angle $\geq 2\pi$ \cite{HP97},
	\item Singular flat metrics induced by quadratic differentials \cite{DLR10},
	\item Singular flat metrics with cone singularities of angle $\geq 2\pi$ \cite{BL18}.
\end{itemize}

A closed curve on $S_g$ is said to be filling if it intersects every homotopically non-trivial closed curve on $S_g$. A geodesic current $\alpha \in \mathcal{C}_g$ is said to be filling if $i(\alpha,\beta) > 0$ for every non-zero $\beta \in \mathcal{C}_g$. Denote by $\mathcal{C}_g^* \subseteq \mathcal{C}_g$ the open subset of filling geodesic currents on $S_g$. Relevant examples of filling geodesic currents include filling closed curves and the Liouville currents listed above. Singular measured foliations  on $S_g$ embed into $\mathcal{C}_g$ by considering their geodesic representatives with respect to any auxiliary hyperbolic metric \cite{Lev83}. Given a filling geodesic current $\alpha \in \mathcal{C}_g^*$ consider the positive constant
\begin{equation*}
%\label{eq:freq}
c(\alpha) := \nu_{\mathrm{Thu}}\left(\{\lambda \in \mf \ | \ i(\alpha,\lambda) \leq 1\}\right).
\end{equation*}

\subsection*{Notation.} Let $A,B \in \mathbf{R}$ and $*$ be a set of parameters. We write $A \preceq_* B$ if there exists a constant $C= C(*) > 0$ depending only on $*$ such that $A \leq C \cdot B$. We write $A \asymp_* B$ if $A \preceq_* B $ and $B \preceq_* A$. We write $A = O_*(B)$ if there exists a constant $C = C(*) > 0$ depending only on $*$ such that $|A| \leq C \cdot B$. 

\section{Effective mapping class group dynamics}

\subsection*{Outline of this section.} In this section we discuss the main results of the prequels \cite{Ara20b} and \cite{Ara20c} needed to apply the tracking method sketched in \S 1 to prove Theorem \ref{theo:main}. We first discuss the effective estimates for counting functions of mapping class group orbits of Teichmüller space proved in the prequel \cite{Ara20b}. See Theorems \ref{theo:teich_count} and \ref{theo:bisect_count}. These estimates are the main counting tool used in the proof of Theorem \ref{theo:main}. We then discuss the main result of the prequel \cite{Ara20c}, which shows that the action of the mapping class group on the space of closed curves of a closed, orientable surface effectively tracks the corresponding action on Teichmüller space. See Theorem \ref{theo:track}. This result is the cornerstone of the tracking method. 

\subsection*{Counting mapping class group orbits of Teichmüller space.} For the rest of this section we fix an integer $g \geq 2$ and a connected, oriented, closed surface $S_g$ of genus $g$. Recall that $\tt$ denotes the Teichmüller space of marked complex structures on $S_g$ and that $d_\mathcal{T}$ denotes the Teichmüller metric on $\mathcal{T}_g$. Recall that $\mcg$ denotes the mapping class group of $S_g$, that this group acts properly discontinuously on $\tt$ by changing the markings, and that this action preserves the Teichmüller metric. For every pair of marked complex structures $X,Y \in \tt$ and every $R > 0$ consider the counting function
\[	
N(X,Y,R) := \#\{\mc \in \mcg \ | \ d_\mathcal{T}(X,\mc.Y)\}.
\]

Recall the definitions of the constants $b_g > 0$ and $\Lambda_g > 0$ introduced in (\ref{eq:bg}) and (\ref{eq:hmc}). In the prequel \cite{Ara20b}, building on previous work of Athreya, Bufetov, Eskin, and Mirzakhani \cite{ABEM12}, we prove the following effective estimate for the counting function $N(X,Y,R)$.
 
\begin{theorem} 
	\cite[Theorem 4.1]{Ara20b}
 	\label{theo:teich_count}
 	There exists a constant $\kappa = \kappa(g) > 0$ such that for every compact subset $\mathcal{K} \subseteq \tt$, every $X,Y \in \mathcal{K}$, and every $R>0$,
 	\[
 	N(X,Y,R)=  \frac{\Lambda_g^2}{b_g}\cdot e^{(6g-6)R} + O_\mathcal{K}\left(e^{(6g-6-\kappa)R}\right).
 	\]
\end{theorem}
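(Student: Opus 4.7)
The plan is to effectivize the Athreya--Bufetov--Eskin--Mirzakhani counting argument \cite{ABEM12} by replacing each ergodicity input with an exponential mixing estimate, thereby converting the original $o(e^{(6g-6)R})$ remainder into a genuine power saving. The geometric backbone is the cotangent parametrization of the ball $B_R(X) = \{Z \in \tt : d_\mathcal{T}(X,Z) \leq R\}$: a point $Z$ at distance $t \leq R$ from $X$ is encoded by $(q,t)$ with $q = q_s(X,Z) \in S(X)$ and $\pi(a_t q) = Z$. Under the Hubbard--Masur identification $S(X) \simeq \pmf$ and the fact that $\Lambda \equiv \Lambda_g$ on $\tt$, the natural volume on $B_R(X)$ disintegrates as a conformal measure on $S(X)$ of total mass $\Lambda_g$ against the Jacobian $e^{(6g-6)t}\, dt$; carrying this out on both sides explains the predicted leading constant $\Lambda_g^2/b_g$.

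\medskip

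The argument would proceed in three steps. First, fix a scale $\varepsilon > 0$ and build $\mcg$-invariant bumps $\phi_X, \phi_Y$ on $\tt$ of unit $L^1$-mass concentrated on the $\mcg$-orbits of $X,Y$ with Sobolev norms $\preceq_\mathcal{K} \varepsilon^{-C}$; replace $N(X,Y,R)$ by its smoothing $\widetilde{N}$ against $\phi_X \otimes \phi_Y$. Compactness of $\mathcal{K}$ controls the orbit points landing in the $O(\varepsilon)$-thick annulus around $\partial B_R(X)$ by $O_\mathcal{K}(\varepsilon \cdot e^{(6g-6)R})$, so $|N - \widetilde{N}| \preceq_\mathcal{K} \varepsilon \cdot e^{(6g-6)R}$. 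Second, unfold the $\mcg$-sum in $\widetilde{N}$ and pass to cotangent coordinates, rewriting $\widetilde{N}(X,Y,R)$ as a Teichmüller-flow integral
\[
\int_0^R e^{(6g-6)t} \int_{\qum} \Phi_X(q) \cdot \Phi_Y(a_{-t} q) \, d\widehat{\mu}_{\mathrm{mv}}(q) \, dt,
\]
where $\Phi_X, \Phi_Y$ are smooth test functions on $\qum$ obtained from $\phi_X, \phi_Y$ via the conformal disintegration of $\widehat{\mu}_{\mathrm{mv}}$ transverse to $\{a_t\}$, normalized so that $\int \Phi_X \, d\widehat{\mu}_{\mathrm{mv}} = \int \Phi_Y \, d\widehat{\mu}_{\mathrm{mv}} = \Lambda_g$ (this is where the Dumas--Mirzakhani constancy of $\Lambda$ enters). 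Third, invoke the Sobolev-effective exponential mixing of $(a_t, \widehat{\mu}_{\mathrm{mv}})$ due to Avila--Gouëzel--Yoccoz \cite{AGY06} and Avila--Resende \cite{AR12} to obtain
\[
\int_{\qum} \Phi_X \cdot (\Phi_Y \circ a_{-t}) \, d\widehat{\mu}_{\mathrm{mv}} = \frac{\Lambda_g^2}{\widehat{\mu}_{\mathrm{mv}}(\qum)} + O\!\left(\varepsilon^{-C} e^{-\alpha t}\right);
\]
integrating against $e^{(6g-6)t}\, dt$ from $0$ to $R$ produces the main term $\frac{\Lambda_g^2}{b_g} e^{(6g-6)R}$ with remainder $O_\mathcal{K}(\varepsilon^{-C} e^{(6g-6-\alpha)R})$. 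Balancing the two error sources by choosing $\varepsilon \asymp e^{-\eta R}$ for small $\eta > 0$ yields the theorem with $\kappa = \kappa(g) > 0$.

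\medskip

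The main obstacle I anticipate is executing the reduction to the mixing integral cleanly: the test functions $\Phi_X, \Phi_Y$ naturally live on codimension-one transversals to $\{a_t\}$ rather than smoothly on $\qum$, so one must either further smooth in the flow direction --- paying another $\varepsilon^{-1}$ in the Sobolev bound --- or use a bisector-style mixing statement in the spirit of Margulis \cite{Mar04} and Eskin--McMullen that applies directly to such sections. A second delicate point is maintaining polynomial-in-$\varepsilon^{-1}$ Sobolev control uniformly as $(X,Y)$ ranges over $\mathcal{K}$: this is secured by working above the thick part of $\mm$ via Mumford compactness, and it is precisely the failure of these estimates near the boundary of $\mm$ that forces the implicit constant to depend on $\mathcal{K}$.
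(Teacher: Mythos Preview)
This theorem is not proved in the present paper: it is quoted verbatim from the prequel \cite[Theorem 4.1]{Ara20b}, and the paper supplies no argument beyond the citation. There is therefore no proof here against which to compare your proposal.

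That said, your sketch matches what the paper itself says about the content of \cite{Ara20b}: the introduction and \S3 describe those estimates as ``inspired by ideas introduced by Margulis in his thesis \cite{Mar04}, build[ing] on previous work of Athreya, Bufetov, Eskin, and Mirzakhani \cite{ABEM12}, and rely[ing] on the exponential mixing property of the Teichm\"uller geodesic flow \cite{AGY06,AR12} in a crucial way.'' Your three-step outline---smoothing at scale $\varepsilon$, unfolding to a flow correlation integral, applying Sobolev-effective exponential mixing, and optimizing $\varepsilon$---is exactly the Margulis-style effectivization of \cite{ABEM12} that this description points to. The obstacles you flag (that $\Phi_X,\Phi_Y$ live on codimension-one transversals and must be further thickened in the flow direction, and that uniformity over $\mathcal{K}$ requires staying in the thick part) are real and are the technical heart of \cite{Ara20b}; you have correctly identified where the work lies, but the resolution of those points is carried out in that prequel, not here.
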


To apply the tracking method sketched in \S 1 to prove Theorem \ref{theo:main} we need effective estimates for counting functions of mapping class group orbits of Teichmüller space more specialized than $N(X,Y,R)$. To define these counting functions precisely let us first review some of the notation introduced in previous sections. Recall that $\qut$ denotes the Teichmüller space of marked unit area quadratic differentials on $S_g$, that $\pi \colon \qut \to \tt$ denotes the natural projection to $\mathcal{T}_g$, and that $S(X) := \pi^{-1}(X)$ for every $X \in \mathcal{T}_g$. Recall that $\Delta \subseteq \mathcal{T}_g \times \mathcal{T}_g$ denotes the diagonal of $\mathcal{T}_g$ and that $q_s,q_e \colon \mathcal{T}_g \times \mathcal{T}_g - \Delta \to \mathcal{Q}^1\mathcal{T}_g$ denote the maps which to every pair of distinct points $X \neq Y \in \mathcal{T}_g$ assign the marked quadratic differentials $q_s(X,Y) \in S(X)$ and $q_e(X,Y) \in S(Y)$ corresponding to the cotangent directions at $X$ and $Y$ of the unique Teichmüller geodesic segment from $X$ to $Y$. 

Recall that $\mf$ denotes the space of singular measured foliations on $S_g$ and that $\mathbf{R}^+$ acts naturally on this space by scaling the transverse measures. Recall that $\pmf$ denotes the space of projective singular measured foliations on $S_g$ and that $[\lambda] \in \pmf$ denotes the equivalence class of $\lambda \in \mf$. Consider the maps $\Re,\Im \colon \qut \to \mf$ which to every $q \in \qut$ assign its vertical and horizontal foliations $\Re(q), \Im(q) \in \mf$. Given $X \in \mathcal{T}_g$ and $U \subseteq \pmf$ consider the sector
\[
\mathrm{Sect}_U(X) := \{Y \in \mathcal{T}_g \setminus \{X\} \ | \ [\Re(q_s(X,Y))] \in U\}.
\]
For every $X,Y \in \mathcal{T}_g$, every $U,V \subseteq \pmf$, and every $R > 0$, consider the bisector counting function
\begin{equation}
\label{eq:N_count}
N(X,Y,U,V,R) = \# \left\lbrace
\begin{array}{l | l}
\mc \in \mcg \ & \ d_\mathcal{T}(X,\mc.Y) \leq R, \\ 
& \ \mc.Y \in \mathrm{Sect}_U(X), \\
& \ \mc^{-1}.X \in \mathrm{Sect}_V(Y).
\end{array}
\right\rbrace.
\end{equation}

To give an effective estimate for this counting function we restrict our attention to a particular class of subsets $U,V \subseteq \pmf$ we now introduce. Recall that $\Sigma$ denotes the piecewise linear manifold $\Sigma:=\mathbf{R}^2 / \langle-1\rangle$ endowed with the quotient Euclidean metric. Recall that the product $\IT^{3g-3}$ is a piecewise linear manifold which we endow with the $L^2$ product metric. The projectivization $P\Sigma^{3g-3}$ of $\Sigma^{3g-3}$ can be canonically identified with the subset $\smash{\Sigma_u^{3g-3}} \subseteq \smash{\Sigma^{3g-3}}$ of points in $\Sigma^{3g-3}$ of unit $L^1$-norm through a homeomorphism we denote by $u \colon \pmf \to \smash{\Sigma_u^{3g-3}}$. We endow the subset $\smash{\Sigma_u^{3g-3}} \subseteq \Sigma^{3g-3}$ with the induced Euclidean metric. The subset $\smash{\Sigma_u^{3g-3}} \subseteq \Sigma^{3g-3}$ can be represented as a union of $2^{3g-3}$ closed affine simplices of dimension $6g-7$ we refer to as the facets of $\smash{\Sigma_u^{3g-3}}$.

Recall that any set of Dehn-Thurston coodinates provides a homeomorphism $F \colon \mf \to \IT^{3g-3}$ equivariant with respect to the natural $\mathbf{R}^+$ scaling actions on $\mf$ and $\IT^{3g-3}$. We denote the projectivization of any such homeomorphism by $PF \colon \pmf \to P\Sigma^{3g-3}$. A measurable subset $U \subseteq \pmf$ is said to be an $F$-simplex if there exists an open affine simplex $W \subseteq \smash{\Sigma_u^{3g-3}}$ of dimension $6g-7$ contained in an $F$-facet of $\smash{\Sigma_u^{3g-3}}$ such that $W \subseteq u \circ PF(U) \subseteq \smash{\overline{W}}$.

Recall that $\SExt_X(\lambda)$ denotes the square root of the extremal length of a singular measured foliation $\lambda \in \mf$ with respect to a marked complex structure $X \in \mathcal{T}_g$. Recall that $\nu_\mathrm{Thu}$ denotes the Thurston measure on $\mf$. For every $X \in \mathcal{T}_g$ denote by $\nu_X$ the measure on $\pmf$ which to every measurable subset $U \subseteq \pmf$ assigns the value
\[
\nu_X(U) = \nu_{\mathrm{Thu}}\left(\left\lbrace\lambda \in \mf \ | \ \SExt_X(\lambda) \leq 1, \ [\lambda] \in U\right\rbrace\right).
\]

In the prequel \cite{Ara20b} we prove the following effective estimate for the bisector counting function $N(X,Y,U,V,R)$ under the assumption that $U,V \subseteq \pmf$ are simplices in Dehn-Thurston coordinates. This result will be the main counting tool used in the proof of Theorem \ref{theo:main}.

\begin{theorem}
	\cite[Theorem 10.6]{Ara20b}
	\label{theo:bisect_count}
	There exists a constant $\kappa_1 = \kappa_1(g) > 0$ such that for every compact subset $\mathcal{K} \subseteq \mathcal{T}_g$, every $X,Y \in \mathcal{K}$, every set of Dehn-Thurston coordinates $F \colon \mf \to \Sigma^{3g-3}$, every pair of $F$-simplices $U,V \subseteq \pmf$, and every $R > 0$,
	\[
	N(X,Y,U,V,R) = \frac{\nu_X(U) \cdot \nu_Y(V)}{b_g} \cdot e^{(6g-6)R} + O_{\mathcal{K},F}\left(e^{(6g-6-\kappa_1)R}\right).
	\]
\end{theorem}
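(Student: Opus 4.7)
The plan is to follow the Margulis--Athreya--Bufetov--Eskin--Mirzakhani strategy of counting mapping class group orbits by exploiting exponential mixing of the Teichmüller geodesic flow, refined with a careful localization at the boundary at infinity to capture the sector constraints on $U$ and $V$. The passage from the ball count of Theorem~\ref{theo:teich_count} to a bisector count requires converting the angular conditions $\mc.Y \in \mathrm{Sect}_U(X)$ and $\mc^{-1}.X \in \mathrm{Sect}_V(Y)$ into conditions on the horizontal and vertical foliations of the marked quadratic differentials realising the geodesic segment from $X$ to $\mc.Y$.

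First I would set up a smooth unfolding of the counting function. Using the disintegration of the Masur--Veech measure $\mu_{\mathrm{mv}}$ along the fibers $S(X) = \pi^{-1}(X)$, I would construct for small parameters $\epsilon, \delta > 0$ a smooth bump function $f$ on $\qut$ supported in a product-like neighborhood of
\[
\{q \in S(X) \ | \ [\Re(q)] \in U\}
\]
of Teichmüller-diameter $\asymp \epsilon$ in the base and angular thickness $\asymp \delta$ in the $[\Re(q)]$-direction, normalized so that
\[
\int_{\qut} f \, d\mu_{\mathrm{mv}} \approx c \cdot \nu_X(U),
\]
where the constant $c$ comes from the Hubbard--Masur disintegration of $\mu_{\mathrm{mv}}$ over the boundary at infinity with extremal length serving as a transverse coordinate (this is where the Hubbard--Masur constant $\Lambda_g$ enters). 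A symmetric test function $g$ is built near $Y$ adapted to $V$ through the vertical foliation endpoint.

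Next I would compare $N(X,Y,U,V,R)$ to the smoothed correlation
\[
\mathcal{I}(R) := \int_{\qut/\mcg} \biggl( \sum_{\mc \in \mcg} f(\mc.q) \biggr) \cdot g(a_{-R}.q) \, d\widehat{\mu}_{\mathrm{mv}}(q)
\]
and apply the exponential mixing of the Teichmüller geodesic flow of Avila--Gouëzel--Yoccoz and Avila--Resende against appropriate Sobolev norms of $f$ and $g$. Mixing produces
\[
\mathcal{I}(R) = \frac{\bigl(\int f \, d\mu_{\mathrm{mv}}\bigr)\bigl(\int g \, d\mu_{\mathrm{mv}}\bigr)}{\widehat{\mu}_{\mathrm{mv}}(\qum)} + O_\mathcal{K}\bigl(\|f\|_{C^k}\|g\|_{C^k}\, e^{-\alpha R}\bigr)
\]
for some spectral gap $\alpha = \alpha(g) > 0$. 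The factor $e^{(6g-6)R}$ is produced when one translates this time-$R$ flow correlation into a count of MCG elements of displacement $\leq R$, through the Jacobian of the geodesic flow in the unstable direction; combined with the disintegration above and the constant $b_g = (6g-6)\cdot \widehat{\mu}_{\mathrm{mv}}(\qum)$, this delivers the main term $\nu_X(U)\nu_Y(V)\cdot e^{(6g-6)R}/b_g$.

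The principal obstacle, and the source of the power saving, will be balancing two competing error sources. Replacing the rough bisector set by a smooth test configuration incurs a boundary error of order $(\epsilon + \delta)\cdot e^{(6g-6)R}$, and making this estimate quantitative requires the boundary of the bisector set to be regular in the angular direction. The hypothesis that $U$ and $V$ are $F$-simplices in Dehn--Thurston coordinates guarantees piecewise linear boundaries in $\pmf$, and the Lipschitz regularity of extremal length in Dehn--Thurston coordinates (Theorem~\ref{theo:ext_lip}) then yields $\nu_X(\partial_\delta U) \ll_F \delta$, with an implicit constant uniform over $F$-simplices. Meanwhile the mixing error grows like $\epsilon^{-k}\delta^{-k} e^{-\alpha R}$ because the test functions are thinly supported. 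Choosing $\epsilon$ and $\delta$ as suitable small negative powers of $e^R$ balances these two errors and produces the power-saving rate $e^{(6g-6-\kappa_1)R}$ for a constant $\kappa_1 = \kappa_1(g) > 0$ that, crucially, does not depend on the specific $F$-simplices $U,V$ chosen within the coordinate system $F$.
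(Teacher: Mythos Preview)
The paper does not prove this statement: Theorem~\ref{theo:bisect_count} is quoted verbatim as \cite[Theorem 10.6]{Ara20b} and used as a black box throughout \S5. There is therefore no ``paper's own proof'' to compare against here.

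That said, your outline is a reasonable high-level sketch of the Margulis-type strategy that the prequel \cite{Ara20b} carries out, building on \cite{ABEM12} and the exponential mixing results of \cite{AGY06,AR12}. A few remarks on your sketch as written. First, the correlation integral $\mathcal{I}(R)$ you wrote down does not by itself produce a count of mapping classes with displacement $\leq R$; one needs an additional averaging over flow times (a ``wave'' or radial integration) and a comparison between the smoothed count and the actual lattice-point count, which is where the $e^{(6g-6)R}$ Jacobian factor genuinely enters. Second, you invoke Theorem~\ref{theo:ext_lip} from the present paper to control $\nu_X(\partial_\delta U)$, but Theorem~\ref{theo:bisect_count} is a result of the prequel and logically precedes \S4 here; the prequel \cite{Ara20b} supplies its own boundary-regularity inputs for $F$-simplices, and the Lipschitz results of \S4 in this paper are developed for the separate purpose of controlling the variation of $i(\alpha,E_X(\cdot))$ in the proof of Theorem~\ref{theo:count_loc}. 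Third, the dependence of the implicit constant on $F$ (and its independence from the particular $F$-simplices $U,V$) is exactly the point, and you identify it correctly: piecewise-linear boundaries in a fixed Dehn--Thurston chart have uniformly controlled $\delta$-neighborhoods. Your sketch captures the essential mechanism, but be aware that a full proof requires substantially more care in the thickening/unthickening and in the passage from flow correlations to orbit counts than the outline suggests.
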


\subsection*{The tracking principle.} We now discuss the main result of the prequel \cite{Ara20c}, which shows that the action of the mapping class group on the space of closed curves of a closed, orientable surface effectively tracks the corresponding action on Teichmüller space. 

Let $X,Y \in \mathcal{T}_g$, $C > 0$, and $\kappa > 0$. Motivated by Theorem \ref{theo:teich_count}, we say that a subset of mapping classes $\M \subseteq \mcg$ is $(X,Y,C,\kappa)$-sparse if the following bound holds for every $R > 0$,
\[
\#\{\mc \in  \M \ | \ d_\mathcal{T}(X,\mc.Y) \leq R \} \leq C \cdot e^{(6g-6-\kappa)R}.
\] 

Recall that $\mathcal{C}_g$ denotes the space of geodesic currents on $S_g$ and that $i(\cdot,\cdot)$ denotes the geometric intersection number pairing on $\mathcal{C}_g$. The following theorem, the main result of the prequel \cite{Ara20c}, shows that the action of the mapping class group on the space of closed curves of a closed, orientable surface effectively tracks the corresponding action on Teichmüller space in the following sense: for all but quantitatively few mapping classes, the information of how a mapping class moves a given point of Teichmüller space determines, up to a power saving error term, how it changes the geometric intersection numbers of a given closed curve with respect to arbitrary geodesic currents. This result is the cornerstone of the tracking method sketched in \S1 that will be used to prove Theorem \ref{theo:main}.

\begin{theorem}
	\cite[Theorem 9.1]{Ara20c}
	\label{theo:track} 
	There exists a constant $\kappa = \kappa(g) > 0$ such that for every compact subset $\mathcal{K} \subseteq \mathcal{T}_g$ and every closed curve $\beta$ on $S_g$ there exists a constant $C  = C(\mathcal{K},\beta) > 0$ with the following property. For every $X,Y \in \mathcal{K}$ there exists an $(X,Y,C,\kappa)$-sparse subset of mapping classes $M = M(X,Y,\beta) \subseteq \mcg$ such that for every compact subset $K \subseteq \mathcal{C}_g$, every geodesic current $\alpha \in K$, and every $\mc \in \mcg \setminus M$, if $r := d_\mathcal{T}(X,\mc.Y)$, $q_s := q_s(X,\mc.Y)$, and $q_e := q_e(X,\mc.Y)$, then
	\begin{equation}
	\label{eq:X1}
	i(\alpha,\mc.\beta) = i(\alpha,\Re(q_s)) \cdot i(\mc.\beta,\Im(q_e)) \cdot e^r + O_{\mathcal{K},K}\left(e^{(1-\kappa)r}\right).
	\end{equation}
\end{theorem}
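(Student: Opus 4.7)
The plan is to prove the tracking identity (\ref{eq:X1}) by exploiting the hyperbolicity of the Teichmüller geodesic flow and isolating a quantitatively sparse exceptional set $M$ via an effective non-divergence argument. The guiding heuristic is that for generic $\mc$, as $r := d_\mathcal{T}(X,\mc.Y)$ grows, the geodesic current $\mc.\beta$ becomes nearly proportional in $\mathcal{C}_g$ to the vertical foliation $\Re(q_e)$ at the terminal quadratic differential $q_e$. Pairing this proportionality against $\Im(q_e)$ and using $i(\Re(q_e),\Im(q_e)) = \mathrm{Area}(q_e) = 1$ identifies the proportionality constant as $i(\mc.\beta,\Im(q_e))$, so one expects $\mc.\beta$ to be close to the foliation $i(\mc.\beta,\Im(q_e))\cdot\Re(q_e)$ up to an error decaying exponentially in $r$. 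Since the flow $a_t$ sends $\Re(q_s) \mapsto e^r\Re(q_s) = \Re(q_e)$, pairing this approximation against $\alpha$ produces precisely the main term in (\ref{eq:X1}).

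First I would construct the sparse set $M = M(X,Y,\beta)$. Fix a large compact subset $\mathcal{K}_0 \subseteq \mm$ and a small $\epsilon = \epsilon(g) > 0$, and define $M$ as the set of $\mc$ for which the projection to $\mm$ of the Teichmüller geodesic segment from $X$ to $\mc.Y$ has total time outside $\mathcal{K}_0$ exceeding $\epsilon r$. Quantitative non-divergence for the Teichmüller flow (the effective versions of Masur's non-divergence theorem due to Athreya and others), combined with the counting estimate of Theorem \ref{theo:teich_count}, shows that once $\mathcal{K}_0$ is chosen large enough in terms of $\mathcal{K}$ and $\beta$, the resulting $M$ satisfies the $(X,Y,C,\kappa)$-sparse bound for some $\kappa = \kappa(g) > 0$.

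Next I would prove the quantitative alignment statement for $\mc \notin M$. Over $\mathcal{K}_0$ the Teichmüller flow is uniformly hyperbolic on the Hodge bundle, with stable direction along the horizontal foliation and unstable along the vertical. Applying Forni's functional-analytic approach to the Kontsevich-Zorich cocycle and its quantitative refinement by Athreya-Forni, one obtains a uniform spectral gap $\kappa = \kappa(g) > 0$ with the following consequence: for $\mc \notin M$, the geodesic current $\mc.\beta$ lies within distance $O_{\mathcal{K},\beta}(e^{-\kappa r})$ of the ray $\mathbf{R}^+ \cdot \Re(q_e)$ in an appropriate norm on $\mathcal{C}_g$ (say Bonahon's norm induced by an auxiliary hyperbolic metric on $S_g$), scaled by $e^r$ to absorb the exponential growth of the unstable direction. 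Converting norm-closeness into a statement about the intersection pairing using the continuity and bilinearity of $i(\cdot,\cdot)$, and using the uniform bounds $i(\alpha,\Re(q_s)) = O_{\mathcal{K},K}(1)$ and $i(\mc.\beta,\Im(q_e)) = O_{\mathcal{K},\beta}(1)$ (both of which follow from Kerckhoff's extremal length formula $\mathrm{Ext}_X(\Re(q_s)) = \mathrm{Ext}_{\mc.Y}(\Im(q_e)) = 1$ and the mapping class invariance of extremal length), yields the error term $O_{\mathcal{K},K}(e^{(1-\kappa)r})$ in (\ref{eq:X1}).

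The main obstacle is the alignment step. Quantitatively showing that $\mc.\beta$ converges, as an element of $\mathcal{C}_g$ rather than merely at the level of a few numerical intersection statistics, to a positive multiple of $\Re(q_e)$ uniformly in $\mc \notin M$ requires expressing the "horizontal" angular deviation of $\mc.\beta$ in coordinates adapted to $q_e$ as a cocycle over the Teichmüller flow, and then harnessing the spectral gap of this cocycle (which is essentially the Kontsevich-Zorich cocycle, controlled via Forni's weighted Sobolev space techniques and ultimately via the exponential mixing of the Teichmüller flow of Avila-Gouëzel-Yoccoz and Avila-Resende). The compact recurrence guaranteed by the sparse set construction is crucial for upgrading a Lyapunov-type statement, which is only valid almost everywhere in moduli space, into a uniform quantitative estimate holding simultaneously for all $\mc \notin M$.
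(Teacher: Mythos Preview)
This theorem is not proved in the present paper; it is quoted verbatim from the prequel \cite{Ara20c} as Theorem~9.1 there, and the paper only records (in \S1 and \S3) that its proof rests on the renormalization dynamics of the Teichm\"uller geodesic flow, specifically the functional-analytic approach of Forni and Athreya--Forni. So there is no proof here to compare against, only a description of ingredients.

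Your sketch is broadly consistent with those stated ingredients: you correctly identify the heuristic that $\mc.\beta$ should align with $\Re(q_e)$ at rate $e^{-\kappa r}$, you construct $M$ via recurrence to a compact set in $\mm$, and you invoke the Forni/Athreya--Forni spectral gap to quantify the alignment. One point to flag: your alignment step asserts that $\mc.\beta$ is close to $\mathbf{R}^+\cdot\Re(q_e)$ \emph{as a geodesic current} in some norm on $\mathcal{C}_g$, and then deduces (\ref{eq:X1}) by pairing with $\alpha$. This is stronger than what is needed, and making it precise is delicate because $\beta$ is an arbitrary (possibly highly self-intersecting) closed curve, not a cohomology class or a measured foliation. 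The Athreya--Forni estimates control deviations of ergodic averages of translation flows against test functions via the Kontsevich--Zorich cocycle; translating this into a uniform statement about $i(\alpha,\mc.\beta)$ for all $\alpha$ in a compact set of currents requires an intermediate step---essentially showing that the length of $\mc.\beta$ in the flat metric of $q_e$, decomposed into horizontal and vertical components, has vertical component dominated by the horizontal one up to $O(e^{-\kappa r})$. That is the content of the renormalization argument, and your sketch gestures at it but does not isolate it. Without access to \cite{Ara20c} one cannot say whether your outline matches the actual architecture there, but the pieces you name are the right ones.
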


Theorem \ref{theo:track} can be interpreted as follows: for all but quantitatively few mapping classes $\mc \in \mcg$, to estimate the geometric intersection number $i(\alpha,\mc.\beta)$, it is not necessary to know how $\alpha$ and $\mc.\beta$ interact between themselves, but rather, it is enough to know how they independently interact with objects determined by the action of $\mc$ on $\mathcal{T}_g$. Let us highlight the fact that the terms $i(\alpha,\Re(q_s))$ and $i(\mc.\beta,\Im(q_e))$ in (\ref{eq:X1}) can be controlled using the information prescribed by the bisector counting function $N(X,Y,U,V,R)$ introduced in (\ref{eq:N_count}).

Recall that $\mathcal{C}_g^* \subseteq \mathcal{C}_g$ denotes the open subset of filling geodesic currents on $S_g$. When proving Theorem \ref{theo:main} we will directly use the following multiplicative version of Theorem \ref{theo:track} for filling geodesic currents and filling closed curves on $S_g$ rather than refering back to the original statement.

\begin{theorem}
	\label{theo:track_mult} 
	There exists a constant $\kappa_2 = \kappa_2(g) > 0$ such that for every compact subset $\mathcal{K} \subseteq \mathcal{T}_g$, every compact subset $K \subseteq \mathcal{C}_g^*$, and every filling closed curve $\beta$ on $S_g$ there exist constants $C  = C(\mathcal{K},\beta) > 0$ and $A = A(\mathcal{K},K,\beta) > 0$ with the following property. For every $X,Y \in \mathcal{K}$ there exists an $(X,Y,C,\kappa_2)$-sparse subset $M = M(X,Y,\beta) \subseteq \mcg$ such that for every $\alpha \in K$ and every $\mc \in \mcg \setminus M$, if $r := d_\mathcal{T}(X,\mc.Y)$, $q_s := q_s(X,\mc.Y)$, and $q_e := q_e(X,\mc.Y)$, then
	\begin{gather*}
	i(\alpha,\mc.\beta) \leq i(\alpha,\Re(q_s)) \cdot i(\mc.\beta,\Im(q_e)) \cdot e^r \cdot (1 + A \cdot e^{-\kappa_2 r}),\\
	i(\alpha,\mc.\beta) \geq i(\alpha,\Re(q_s)) \cdot i(\mc.\beta,\Im(q_e)) \cdot e^r \cdot (1 - A \cdot e^{-\kappa_2 r}).
	\end{gather*}
\end{theorem}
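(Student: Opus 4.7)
The plan is to derive Theorem \ref{theo:track_mult} directly from Theorem \ref{theo:track} by dividing the additive error term by a uniform positive lower bound on the prospective main term. I take $\kappa_2 := \kappa$ and use the same sparse set $M$ and constant $C$ produced by Theorem \ref{theo:track}. The additive estimate then reads
\[
i(\alpha,\mc.\beta) = i(\alpha,\Re(q_s)) \cdot i(\mc.\beta,\Im(q_e)) \cdot e^r + O_{\mathcal{K},K}(e^{(1-\kappa_2)r}),
\]
so everything reduces to exhibiting constants $\delta_1 = \delta_1(\mathcal{K},K) > 0$ and $\delta_2 = \delta_2(\mathcal{K},\beta) > 0$ such that $i(\alpha, \Re(q_s)) \geq \delta_1$ and $i(\mc.\beta, \Im(q_e)) \geq \delta_2$ for all relevant $\alpha, \beta, \mc, X, Y$.

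For the first factor, the identity $\SExt_X(\Re(q)) = 1$ for $q \in S(X)$ places $\Re(q_s(X,\mc.Y))$ in the unit level set of $\SExt_X$ with $X \in \mathcal{K}$. Joint continuity of $\SExt$ on $\mathcal{T}_g \times \mf$ together with compactness of $\mathcal{K}$ yield a compact subset $K_1 \subseteq \mf \setminus \{0\}$, depending only on $\mathcal{K}$, containing all such foliations. Since $\alpha$ ranges over the compact set $K \subseteq \mathcal{C}_g^*$ of filling currents and $i(\cdot,\cdot)$ is continuous and strictly positive on $K \times (\mf \setminus \{0\})$, the minimum $\delta_1 := \min\{i(\alpha,\lambda) : \alpha \in K, \lambda \in K_1\} > 0$ provides the required lower bound.

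For the second factor, one must absorb the dependence on $\mc$. The $\mcg$-equivariance of $q_e$, $\Re$, $\Im$ together with the $\mcg$-invariance of $i(\cdot,\cdot)$ give
\[
i(\mc.\beta, \Im(q_e(X, \mc.Y))) = i(\beta, \Im(q_e(\mc^{-1}.X, Y))).
\]
Reversing the direction of a Teichmüller geodesic interchanges the horizontal and vertical foliations at a given endpoint, so $\Im(q_e(\mc^{-1}.X, Y)) = \Re(q_s(Y, \mc^{-1}.X))$, and this foliation again lies in $K_1$ by the same extremal length argument applied at $Y \in \mathcal{K}$. Fillingness of $\beta$ makes $i(\beta, \cdot)$ continuous and strictly positive on $\mf \setminus \{0\}$, hence bounded below by $\delta_2 := \min_{\lambda \in K_1} i(\beta, \lambda) > 0$.

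Combining these two bounds, the main term $i(\alpha, \Re(q_s)) \cdot i(\mc.\beta, \Im(q_e)) \cdot e^r$ dominates $\delta_1 \delta_2 e^r$; dividing the additive estimate of Theorem \ref{theo:track} by this lower bound converts the error into a multiplicative factor $1 + O_{\mathcal{K},K,\beta}(e^{-\kappa_2 r})$, which yields the two-sided bound with a constant $A = A(\mathcal{K},K,\beta) > 0$. The only non-routine step is the equivariance identity $\Im(q_e(\mc^{-1}.X, Y)) = \Re(q_s(Y, \mc^{-1}.X))$, which is what allows the lower bound on the second factor to become independent of $\mc$; once this identity is in hand, the remainder is a short compactness argument.
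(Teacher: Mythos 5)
Your proof is correct and follows essentially the same route as the paper's: both take the additive tracking estimate of Theorem \ref{theo:track} and show that the product $i(\alpha,\Re(q_s))\cdot i(\mc.\beta,\Im(q_e))$ is bounded below by a constant depending only on $\mathcal{K}$, $K$, $\beta$, so that the additive error can be absorbed multiplicatively. The paper arrives at the lower bound on the second factor slightly more directly, observing that $\mc^{-1}.q_e \in S(Y)\subseteq \pi^{-1}(\mathcal{K})$ and using the compactness of $\Im(\pi^{-1}(\mathcal{K}))\subseteq \mf\setminus\{0\}$, whereas you pass through the reversal identity $\Im(q_e(\mc^{-1}.X,Y)) = \Re(q_s(Y,\mc^{-1}.X))$ to land in your extremal-length-normalized set $K_1$; since $\Im(\pi^{-1}(\mathcal{K}))=\Re(\pi^{-1}(\mathcal{K}))$ the two routes coincide, and the reversal step, while valid, is not strictly necessary.
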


\begin{proof}
	Fix $\mathcal{K} \subseteq \mathcal{T}_g$ compact, $K \subseteq \mathcal{C}_g^*$ compact, and $\beta$ a filling closed curve on $S_g$. Let $X,Y \in \mathcal{K}$ and $\alpha \in K$. Suppose $\mc \in \mcg$ is such that $\mc.Y \neq X$. Denote $q_s:=q_s(X,\mc.Y) \in S(X)$ and $q_e:= q_e(X,\mc.Y) \in S(\mc.Y)$. As $K \subseteq \mathcal{C}_g^*$ is a compact subset of filling geodesic currents on $S_g$, as $\alpha \in K$, as $\mathcal{K} \subseteq \mathcal{T}_g$ is compact, and as $q_s \in S(X) \subseteq \pi^{-1}(\mathcal{K})$,
	\begin{equation}
	\label{eq:e1}
	i(\alpha,\Re(q_s)) \succeq_{\mathcal{K},K} 1.
	\end{equation}
	As $\beta$ is a filling closed curve on $S_g$, as $\mathcal{K} \subseteq \mathcal{T}_g$ is compact, and as $\mc^{-1}.q_e \in S(Y) \subseteq \pi^{-1}(\mathcal{K})$,
	\begin{equation}
	\label{eq:e2}
	i(\mc.\beta,\Im(q_e)) = i(\beta,\Im(\mc^{-1}.q_e)) \succeq_{\mathcal{K},\beta} 1.
	\end{equation}
	Putting together (\ref{eq:e1}) and (\ref{eq:e2}) we deduce
	\begin{equation}
	\label{eq:e3}
	i(\alpha,\Re(q_s)) \cdot i(\mc.\beta,\Im(q_e)) \succeq_{\mathcal{K},K,\beta} 1.
	\end{equation}
	Theorem \ref{theo:track_mult} follows directly from Theorem \ref{theo:track} and (\ref{eq:e3}).
\end{proof}

\subsection*{Sparsity in terms of geometric intersection numbers.} The tracking principle in Theorem \ref{theo:track_mult} holds after discarding a subset of mapping classes that is sparse with respect to the Teichmüller metric. To apply this principle in the proof of Theorem \ref{theo:main} we need to translate this notion of sparsity into a notion defined in terms of the geometric intersection numbers. The following bound, which is a direct consequence of the fact that the Teichmüller geodesic flow contracts the transverse measures of the horizontal foliations of quadratic differentials at an exponential rate, achieves this purpose.

\begin{proposition}
	\label{prop:coarse}
	Let $\mathcal{K} \subseteq \mathcal{T}_g$ be a compact subset and $K \subseteq \mathcal{C}_g^*$ be a compact subset of filling geodesic currents. Then, for every $X,Y \in \mathcal{K}$, every $\alpha,\beta \in K$, and every $\mc \in \mcg$, if $r := d_\mathcal{T}(X,\mc.Y)$, 
	\[
	e^r \preceq_{\mathcal{K},K} i(\alpha,\mc.\beta).
	\]
\end{proposition}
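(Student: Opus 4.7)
The plan is to extract the factor $e^r$ from the exponential contraction of the horizontal foliation along the Teichmüller geodesic segment from $X$ to $\mc.Y$, and then to convert this contraction into the desired lower bound on $i(\alpha,\mc.\beta)$ via two successive compactness-and-filling comparisons. Assuming $\mc.Y \neq X$ (the remaining case being handled trivially by proper discontinuity of the $\mcg$-action together with the filling property of $\alpha$ and $\beta$), set $q_s := q_s(X,\mc.Y) \in S(X)$ and $q_e := q_e(X,\mc.Y) \in S(\mc.Y)$. Since $q_e = a_r.q_s$, the horizontal transverse measures satisfy $\Im(q_e) = e^{-r}\Im(q_s)$, which is the contraction mechanism referenced in the statement.

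The first step would be to establish $i(\Im(\mc^{-1}.q_e),\beta) \succeq_{\mathcal{K},K} 1$. Because the $\mathrm{SL}(2,\mathbf{R})$-action and the $\mcg$-action on $\qut$ commute, $\mc^{-1}.q_e$ belongs to $S(Y) \subseteq \pi^{-1}(\mathcal{K})$, a compact subset of $\qut$, so $\Im(\mc^{-1}.q_e)$ ranges over a compact set of non-zero measured foliations. Combined with $\beta \in K$, a compact set of filling currents, the continuous pairing $(\lambda,\mu) \mapsto i(\lambda,\mu)$ attains a positive uniform lower bound on this compact domain, by exactly the reasoning used to derive (\ref{eq:e2}) in the proof of Theorem \ref{theo:track_mult}. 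Combining with $\mcg$-invariance of intersection numbers and the contraction formula then gives
\[
i(\Im(q_s),\mc.\beta) \;=\; e^r\cdot i(\Im(q_e),\mc.\beta) \;=\; e^r\cdot i(\Im(\mc^{-1}.q_e),\beta) \;\succeq_{\mathcal{K},K}\; e^r.
\]

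The second step is to compare $i(\alpha,\mc.\beta)$ with $i(\Im(q_s),\mc.\beta)$. Fixing any auxiliary filling length function $\ell$ on $\mathcal{C}_g$ (say, hyperbolic length with respect to a fixed base point), filling of $\alpha$ and compactness of $K$ give $i(\alpha,\gamma) \succeq_K \ell(\gamma)$ uniformly, while continuity together with the fact that $\Im(q_s)$ varies in a compact subset of $\mf$ (because $q_s \in \pi^{-1}(\mathcal{K})$) gives $i(\Im(q_s),\gamma) \preceq_\mathcal{K} \ell(\gamma)$ uniformly. Dividing yields $i(\alpha,\gamma) \succeq_{\mathcal{K},K} i(\Im(q_s),\gamma)$ for every non-zero current $\gamma$, and specializing to $\gamma = \mc.\beta$ concludes the proof. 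The only real care required is in verifying that the comparison constants in both steps are genuinely uniform over the compact sets $\mathcal{K}$ and $K$; this is a standard joint-continuity-and-compactness argument on $\PC_g$ exploiting the positivity of $i(\cdot,\cdot)$ on non-zero currents guaranteed by the filling property, closely parallel to the derivation of (\ref{eq:e1})--(\ref{eq:e2}) in the proof of Theorem \ref{theo:track_mult}.
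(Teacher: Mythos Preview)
Your proof is correct and follows essentially the same approach as the paper: both arguments extract $e^r$ from the contraction identity $\Im(q_e)=e^{-r}\Im(q_s)$, bound $i(\beta,\Im(\mc^{-1}.q_e))$ below uniformly via compactness and the filling property (the paper's (\ref{eq:b3})), and then compare $i(\Im(q_s),\mc.\beta)$ with $i(\alpha,\mc.\beta)$ via a compactness-plus-filling ratio argument (the paper's (\ref{eq:b2}), which invokes projective compactness of $\mathcal{C}_g$ directly where you route through an auxiliary length function $\ell$). The only cosmetic difference is that the paper excludes the slightly larger exceptional set $\{\mc:\mc.\mathcal{K}\cap\mathcal{K}\neq\emptyset\}$ rather than just $\{\mc:\mc.Y=X\}$, but both are handled by proper discontinuity in the same way.
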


\begin{proof}
	Let $X,Y \in \mathcal{K}$ and $\alpha,\beta \in K$. Suppose that $\mc \in \mcg$ is such that $\mc.\mathcal{K}\cap\mathcal{K} = \emptyset$ and in particular $\mc.Y \neq X$. Let $r := d_\mathcal{T}(X,\mc.Y) > 0$, $q_s:=q_s(X,\mc.Y) \in S(X)$ and $q_e:= q_e(X,\mc.Y) \in S(\mc.Y)$. 
	Recall that $\{a_t\}_{t \in \mathbf{R}}$ denotes the Teichmüller geodesic flow on $\qut$. As $a_r.q_s = q_e$ and as the Teichmüller geodesic flow contracts the transverse measures of the horizontal foliations of quadratic differentials at an exponential rate, $\Im(q_e) = \Im(a_r.q_s) = e^{-r} \cdot \Im(q_s)$. In particular,
	\begin{equation}
	\label{eq:b1}
	i(\mc.\beta,\Im(q_e)) \cdot e^r = i(\mc.\beta,\Im(q_s)).
	\end{equation}
	As $K \subseteq \mathcal{C}_g^*$ is a compact subset of filling geodesic currents on $S_g$, as $\alpha \in K$, as $\mathcal{K} \subseteq \mathcal{T}_g$ is compact, as $q_s \in S(X) \subseteq \pi^{-1}(\mathcal{K})$, and as $\mathcal{C}_g$ is projectively compact, 
	\begin{equation}
	\label{eq:b2}
	i(\mc.\beta,\Im(q_s)) \preceq_{\mathcal{K},K} i(\alpha,\mc.\beta).
	\end{equation}
	As $K \subseteq \mathcal{C}_g^*$ is a compact subset of filling geodesic currents on $S_g$, as $\beta \in K$, as $\mathcal{K} \subseteq \mathcal{T}_g$ is compact, and as $\mc^{-1}.q_e \in S(Y) \subseteq \pi^{-1}(\mathcal{K})$, 
	\begin{equation}
	\label{eq:b3}
	i(\mc.\beta,\Im(q_e)) = i(\beta,\Im(\mc^{-1}.q_e)) \succeq_{\mathcal{K}} 1.
	\end{equation}
	Putting together (\ref{eq:b1}), (\ref{eq:b2}), and (\ref{eq:b3}) we deduce that for every $\mc \in \mcg$ such that $\mc.\mathcal{K} \cap \mathcal{K} = \emptyset$, if $r := d_\mathcal{T}(X,\mc.Y)  > 0$, then
	\begin{equation}
	\label{eq:Y1}
	e^r \preceq_{\mathcal{K},K} i(\alpha,\mc.\beta).
	\end{equation}
	As the action of $\mcg$ on $\mathcal{T}_g$ is properly discontinuous and as $\mathcal{K} \subseteq \mathcal{T}_g$ is compact, there exist only finitely many mapping classes $\mc \in \mcg$ such that $\mc.\mathcal{K} \cap \mathcal{K} \neq \emptyset$. Thus, the estimate in (\ref{eq:Y1})  holds for every $\mc \in \mcg$ by increasing the implicit constant.
\end{proof}

\section{Lipschitz functions on the space of singular measured foliations}

\subsection*{Outline of this section.} In this section we study the regularity of certain functions of interest on the space of singular measured foliations. More specifically, we show that the geometric intersection number with respect to a given geodesic current and the extremal length with respect to a given marked complex structure define Lipschitz functions on the space of singular measured foliations  when parametrized using Dehn-Thurston coordinates. See Theorems \ref{theo:curr_lip} and \ref{theo:ext_lip}. These results will play an important role in the proof of Theorem \ref{theo:main}. These results are proved using train track coordinates and some general facts about convex functions in Euclidean spaces. 

\subsection*{Convexity of geometric intersection numbers.} For the rest of this section we fix an integer $g \geq 2$ and a connected, oriented, closed surface $S_g$ of genus $g$. Recall that $\mf$ denotes the space of singular measured foliations on $S_g$. Let $\tau$ be a maximal train track on $S_g$. Recall that the cone $U(\tau) \subseteq \mf$ of singular measured foliations on $S_g$ carried by $\tau$ can be identified with the cone $V(\tau) \subseteq \smash{(\mathbf{R}_{\geq0})^{18g-18}}$ of non-negative counting measures on the edges of $\tau$ satisfying the switch conditions through a natural $\mathbf{R}^+$-equivariant bijection $\Phi_\tau \colon U(\tau) \to V(\tau)$ we refer to as the train track chart induced by $\tau$ on $\mf$. Recall that $\mathcal{C}_g$ denotes the space of geodesic currents on $S_g$ and that $i(\cdot,\cdot)$ denotes the geometric intersection number pairing on $\mathcal{C}_g$. The following theorem of Mirzakhani shows that geometric intersection numbers are convex in train track coordinates.

\begin{theorem}
	\cite[Theorem A.1]{Mir04}
	\label{theo:current_convex}
	Let $\tau$ be a maximal train track on $S_g$, $\Phi_\tau \colon U(\tau) \to V(\tau)$ be the  train track chart induced by $\tau$ on $\mf$, and $\alpha \in \mathcal{C}_g$ be a geodesic current on $S_g$. Then, the composition $i(\alpha,\cdot) \circ \Phi_\tau^{-1} \colon V(\tau) \to \mathbf{R}$ is convex.
\end{theorem}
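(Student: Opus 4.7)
The plan is to combine Bonahon's density theorem with a direct subadditivity argument on the cone of train-track-carried foliations.

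As a first step, I would reduce to the case where $\alpha = \gamma$ is a closed curve. Bonahon's density theorem ensures that weighted closed curves are dense in $\mathcal{C}_g$; combined with the joint continuity of the intersection pairing $i(\cdot,\cdot)$ and the fact that pointwise limits of convex functions are convex, this reduces the problem to weighted closed curves. The bilinearity of $i$ then strips off the weight.

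The core of the proof is to establish the subadditivity inequality
\[
i(\gamma, \lambda_{v_1+v_2}) \leq i(\gamma, \lambda_{v_1}) + i(\gamma, \lambda_{v_2}), \qquad v_1, v_2 \in V(\tau),
\]
where $\lambda_v := \Phi_\tau^{-1}(v)$. Together with the positive homogeneity $i(\gamma, \lambda_{tv}) = t \, i(\gamma, \lambda_v)$ for $t \geq 0$ (a consequence of $\Phi_\tau$ being $\mathbf{R}^+$-equivariant), subadditivity is equivalent to convexity of $i(\gamma, \cdot) \circ \Phi_\tau^{-1}$, since for $t \in [0,1]$ one has $i(\gamma, \lambda_{tv_1+(1-t)v_2}) \leq i(\gamma, \lambda_{tv_1}) + i(\gamma, \lambda_{(1-t)v_2}) = t\, i(\gamma, \lambda_{v_1}) + (1-t)\, i(\gamma, \lambda_{v_2})$. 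To prove the subadditivity, I would realize both $\lambda_{v_1}$ and $\lambda_{v_2}$ as measured foliations supported in a common regular neighborhood $N(\tau)$ of the train track, with transverse measures on each edge $e$ given by $(v_1)_e$ and $(v_2)_e$ respectively. For each $i \in \{1,2\}$, pick an efficient representative $\gamma_i^*$ of $\gamma$ minimizing the transverse measure with $\lambda_{v_i}$, and then use a surgery argument at the switches of $\tau$ to produce a single representative $\gamma^*$ of $\gamma$ whose transverse measure with the superposition $\lambda_{v_1+v_2}$ is at most the sum of the transverse measures picked up by $\gamma_1^*$ and $\gamma_2^*$ from their respective foliations.

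The principal obstacle is this surgery step: constructing a single representative $\gamma^*$ that is simultaneously near-efficient for both transverse measures. The combinatorial structure of $\tau$, in particular the switch conditions governing the possible edge-crossing patterns of representatives of $\gamma$, should allow such a $\gamma^*$ to be built with edge-crossing counts componentwise dominated by those of $\gamma_1^*$ and $\gamma_2^*$; since the transverse measure with $\lambda_v$ is a linear functional of $v$ determined entirely by these crossing counts, the desired bound would follow. Extending convexity from closed curves back to arbitrary $\alpha \in \mathcal{C}_g$ is then automatic from the density/continuity step.
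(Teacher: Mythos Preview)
The paper does not prove this statement itself; it is quoted from Mirzakhani's thesis. However, the paper's proof of the parallel result for extremal length (Theorem~\ref{theo:ext_convex}) is explicitly modeled on Mirzakhani's argument, so one can read off the intended strategy there. That strategy differs from yours in an essential way: rather than reducing the \emph{current} $\alpha$ to a closed curve, Mirzakhani reduces the \emph{foliation} side. One uses density of rationally weighted simple closed curves in $U(\tau)$ and homogeneity to assume $\lambda_{v_1}=n\alpha'$ and $\lambda_{v_2}=m\beta'$ for simple closed curves $\alpha',\beta'$ carried by $\tau$; one then realizes $n\alpha'\cup m\beta'$ concretely inside a tie neighborhood of $\tau$, removes bigons, and smooths every crossing in the direction transverse to the ties. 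The resulting simple multi-curve is carried by $\tau$ with counting measure $v_1+v_2$, and the key fact that smoothing a minimally self-intersecting multi-curve does not increase intersection with a fixed current (the analogue of Lemma~\ref{lem:ext_smooth}) yields subadditivity.

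Your route---fixing a closed curve $\gamma$ on the current side and arguing on representatives of $\gamma$---is in principle viable, but the surgery step as you describe it does not work. Given two efficient representatives $\gamma_1^*,\gamma_2^*$ of the \emph{same} free homotopy class, there is no operation that ``surgers'' them into a third representative $\gamma^*$; cutting and pasting two homotopic curves does not produce another curve in that class. What you actually need is the statement that a single closed curve $\gamma$ admits one representative that is simultaneously efficient against every $\lambda_v$ carried by $\tau$ (a quasi-transverse position with respect to $\tau$), so that $v\mapsto i(\gamma,\lambda_v)$ is in fact \emph{linear}, not merely convex. That is true, but it is precisely the content you have deferred to the ``principal obstacle,'' and it is not a consequence of the switch conditions alone. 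Mirzakhani's approach avoids this issue entirely by working on the other factor, where the smoothing construction is explicit and the monotonicity under smoothing is elementary.
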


\subsection*{Convexity of extremal lengths.} Recall that $\mathcal{T}_g$ denotes the Teichmüller space of marked complex structures on $S_g$. Recall that for every $X \in \mathcal{T}_g$ we denote by $\SExt_X \colon \mf \to \mathbf{R}$ the function which to every $\lambda \in \mf$ assigns the square root of the extremal length of $\lambda$ with respect to $X$. Our next goal is to show these functions are convex in train track coordinates.
These functions cannot be represented using geodesic currents. In particular, Theorem \ref{theo:current_convex} does not apply to them. To prove these functions are convex we draw inspiration from arguments in \cite{Mir04} and \cite{MT20}.

In the following discussion we distinguish between parametrized closed multi-curves and their homotopy classes. In particular, adopting the convention in \cite{MT20}, we refer to a finite collection $C := (c_i)_{i=1}^k$ of parametrized closed curves on $S_g$ as a concrete multi-curve. Two concrete multi-curves on $S_g$ are said to be homotopic if there exists a bijection between their homotopically nontrivial components that pairs components equivalent up homotopy and reparametrization. 

Let $C$ be a concrete multi-curve on $S_g$. We refer to any intersection between the components of $C$ as a crossing of $C$. Disregarding orientations, any intersection between the components of $C$ can be smoothed out in two different ways. See Figure \ref{fig:smoothing}. We refer to any concrete multi-curve obtained by smoothing out a sequence of crossings of $C$ as a smoothing of $C$. 

\begin{figure}[h]
	\centering
	\vspace{+0.23cm}
	\begin{subfigure}[b]{0.3\textwidth}
		\centering
		\includegraphics[width=0.5\textwidth]{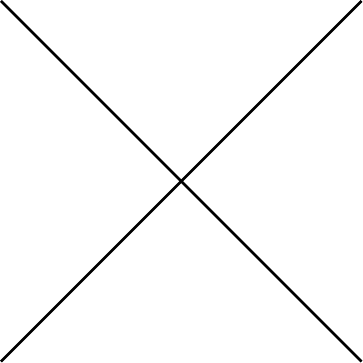}
		\caption{Crossing.}
	\end{subfigure}
	\quad \quad 
	~ %add desired spacing between images, e. g. ~, \quad, \qquad, \hfill etc. 
	%(or a blank line to force the subfigure onto a new line)
	\begin{subfigure}[b]{0.3\textwidth}
		\centering
		\includegraphics[width=0.5\textwidth]{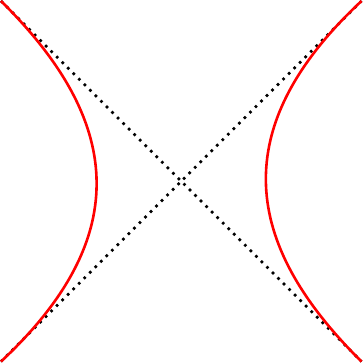}
		\caption{Smoothing I.}
	\end{subfigure}
	\quad \quad 
	~ %add desired spacing between images, e. g. ~, \quad, \qquad, \hfill etc. 
	%(or a blank line to force the subfigure onto a new line)
	\begin{subfigure}[b]{0.3\textwidth}
		\centering
		\includegraphics[width=0.5\textwidth]{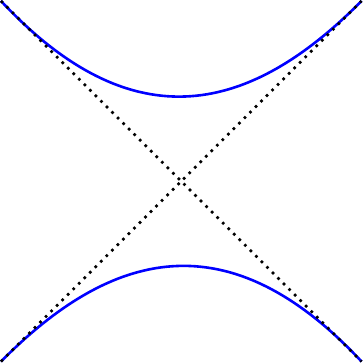}
		\caption{Smoothing II.}
	\end{subfigure}
	\caption{The two different smoothings of a crossing of a concrete multi-curve.} 
	\label{fig:smoothing}
\end{figure}

A concrete multi-curve is said to be essential if it has no null-homotopic components. A concrete multi-curve is said to be self-transverse if all of its components are transverse to themselves and to every other component. Concrete multi-curves can be transformed by performing ambient isotopies, i.e., isotopies of the surface, and Reidemeister moves in embedded disks of the surface. See Figure \ref{fig:red} for pictures of the different Reidemeister moves. A Reidemeister move is said to be performed in a simplifying direction if it does not increase the number of intersections of the arcs involved.

\begin{figure}[h!]
	\centering
	\begin{subfigure}[b]{1.0\textwidth}
		\centering
		\begin{tabular}{c c c}
			\includegraphics[scale=.8]{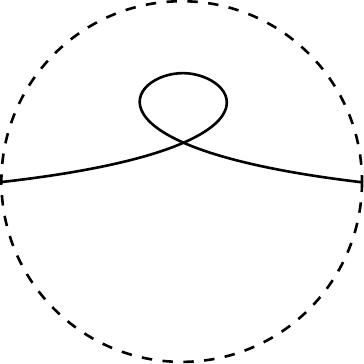} & \begin{tabular}{c} $\rightarrow$  \\[2.69cm] \end{tabular}& \begin{tabular}{c} \includegraphics[scale=.8]{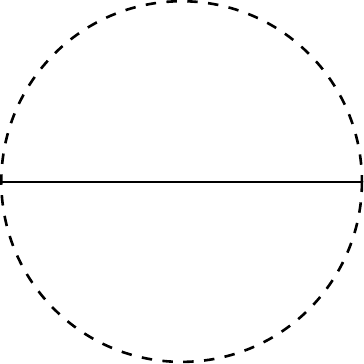} \\[2.6cm] \end{tabular}
		\end{tabular}
	\vspace{-2.5cm}
	\caption{Reidemeister I move.} \vspace{+0.4cm}
	\end{subfigure}
	\begin{subfigure}[b]{1.0\textwidth}
		\centering
		\begin{tabular}{c c c}
			\includegraphics[scale=.8]{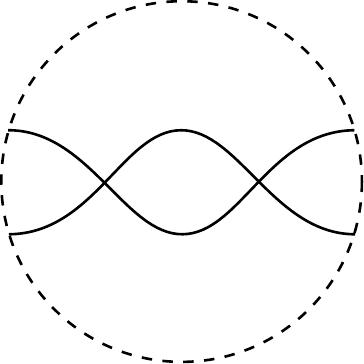} & \begin{tabular}{c} $\rightarrow$  \\[2.69cm] \end{tabular}& \begin{tabular}{c} \includegraphics[scale=.8]{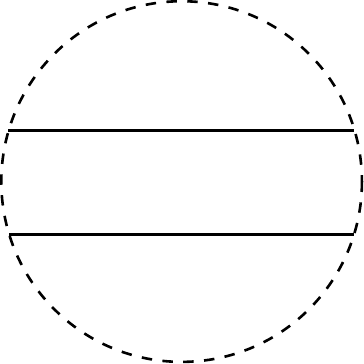} \\[2.6cm] \end{tabular}
		\end{tabular}
		\vspace{-2.5cm}
		\caption{Reidemeister II move.} \vspace{+0.4cm}
	\end{subfigure}
	\begin{subfigure}[b]{1.0\textwidth}
		\centering
		\begin{tabular}{c c c}
			\includegraphics[scale=.8]{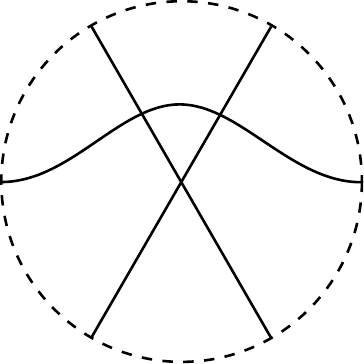} & \begin{tabular}{c} $\leftrightarrow$  \\[2.69cm] \end{tabular}& \begin{tabular}{c} \includegraphics[scale=.8]{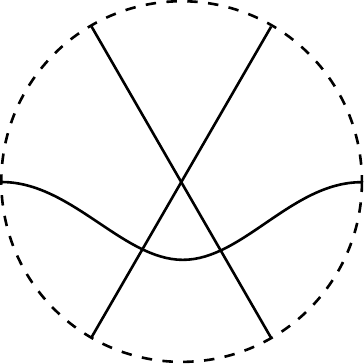} \\[2.6cm] \end{tabular}
		\end{tabular}
		\vspace{-2.5cm}
		\caption{Reidemeister III move.}
	\end{subfigure}
	\captionsetup{width=\linewidth}
	\caption{Reidemeister moves with arrows representing the simplifying directions.} \label{fig:red} 
\end{figure}

\begin{theorem}
	\cite[Theorem 2.2]{HS94} \cite[Theorem 1]{dG97}
	\label{theo:red}
	Let $C$ be an essential and self-transverse concrete multi-curve on $S_g$ minimizing the number of self-crossings in its homotopy class and having no pair of components homotopic to distinct powers of the same closed curve. Then, for every essential and self-transverse concrete multi-curve $C'$ on $S_g$ homotopic to $C$ there exists a finite sequence of ambient isotopies and Reidemeister I, II, and III moves that transform $C'$ into $C$, with the Reidemeister I and II moves being performed only in the simplifying direction.
\end{theorem}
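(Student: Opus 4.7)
The plan is to reduce the theorem to two classical ingredients and then compose them. First, I would transform $C'$ via simplifying Reidemeister I and II moves (and ambient isotopies) into an auxiliary concrete multi-curve $C''$ whose number of self-crossings realizes the minimum in its homotopy class. Second, I would transform $C''$ into $C$ using only Reidemeister III moves and ambient isotopies. Concatenating the two sequences yields a sequence of moves from $C'$ to $C$ with R1 and R2 moves appearing only in the simplifying direction, as required.

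For the first step I would follow the innermost-disk argument of Hass and Scott. Fix an auxiliary hyperbolic metric and lift $C'$ to the universal cover $\widetilde{S}_g$. If $C'$ is not in minimal self-crossing position, a combinatorial Euler characteristic argument applied to the complementary regions in $\widetilde{S}_g$ shows that there must exist either a singular monogon (an embedded disk bounded by a single subarc of a component with one self-intersection point on its boundary) or a singular bigon (an embedded disk cobounded by two subarcs meeting at exactly two crossings). Passing to an innermost such disk in the surface lets me remove it via a Reidemeister I or II move in the simplifying direction, strictly reducing the crossing count. Since the crossing count is a nonnegative integer, the iteration terminates at a self-transverse, crossing-minimizing representative $C''$ homotopic to $C$.

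For the second step I would invoke de Graaf's theorem: any two essential, self-transverse, crossing-minimizing representatives of a homotopy class satisfying the non-parallel-powers hypothesis are related by a sequence of ambient isotopies and Reidemeister III moves alone. The main obstacle lies in this step. The hypothesis that no pair of components of $C$ is homotopic to distinct powers of the same closed curve is indispensable, since otherwise components may braid around one another in combinatorially distinct minimal configurations that cannot be reconciled by R3 moves. The proof proceeds by placing $C$ and $C''$ in general position against each other, decomposing the surface along $C \cup C''$, and arguing that each complementary region (which must be a disk, by minimality and the non-parallel-powers hypothesis) can be swept across by a finite sequence of R3 moves and ambient isotopies. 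Composing Phase 1 and Phase 2 produces the desired sequence from $C'$ to $C$.
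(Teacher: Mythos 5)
The paper does not give its own proof of this theorem---it is quoted as a known result of Hass--Scott \cite{HS94} and de Graaf \cite{dG97}, with the citations supplying the proof. Your two-phase outline (simplify $C'$ to a crossing-minimal $C''$ via innermost-monogon/bigon removal, then connect $C''$ to $C$ via R3 moves and ambient isotopies) correctly reflects how those two references combine to give the statement; Phase 1 is essentially Hass--Scott's algorithm and Phase 2 is de Graaf's theorem. Two technical points in your sketch deserve more care. First, in Phase 1 you need innermostness in the strong sense that no other arcs of the multi-curve pass through the interior of the disk, so that a single simplifying R1 or R2 move removes it; showing that a non-minimal multi-curve always has such a strongly-innermost monogon or bigon (possibly after auxiliary R3 moves that do not change the crossing count) is the substantive content of the Hass--Scott argument and should not be treated as immediate from the Euler-characteristic count. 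Second, in Phase 2 the claim that each complementary region of $C \cup C''$ is a disk requires not just minimality of each multi-curve separately but also that $C$ and $C''$ are in minimal position relative to each other, and the non-parallel-powers hypothesis enters precisely to rule out annular complementary regions that cannot be swept across by R3 moves. Neither point invalidates your strategy, but both are where the real work in the cited references lives; for the purposes of the paper no proof is expected since the theorem is quoted from the literature.
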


Using Theorem \ref{theo:red} we deduce the following result which shows that the smoothings of a minimally self-intersecting concrete multi-curve are witnessed by all homotopic concrete multi-curves.

\begin{lemma}
	\label{lem:smooth}
	Let $C$ be a self-transverse concrete multi-curve on $S_g$ minimizing the number of self-crossings in its homotopy class and having no pair of components homotopic to distinct powers of the same closed curve. Suppose $C'$ is a smoothing of $C$. Then, every concrete multi-curve $D$ on $S_g$ homotopic to $C$ has a smoothing $D'$ homotopic to $C'$.
\end{lemma}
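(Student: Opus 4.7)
The plan is to apply Theorem \ref{theo:red} in reverse and track the smoothed crossings through the resulting Reidemeister sequence. First, we may assume $D$ is essential and self-transverse: null-homotopic components contribute nothing to the statement, and a $C^\infty$-small perturbation makes $D$ self-transverse without changing the homotopy class of any smoothing we construct. By Theorem \ref{theo:red}, there is a finite sequence of ambient isotopies and Reidemeister I, II, III moves transforming $D$ into $C$, with R-I and R-II occurring only in the simplifying direction. Reversing this sequence yields a chain $C = C_0 \to C_1 \to \cdots \to C_n = D$ in which R-I and R-II now strictly add crossings, while R-III preserves their number.

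Next, I would track the prescribed smoothed crossings of $C$ along the chain. Let $T_0$ be the set of crossings of $C$, together with their prescribed smoothing types, whose smoothing produces $C'$. Given $T_i$, define $T_{i+1}$ by the following rules: an ambient isotopy carries each crossing to a unique corresponding crossing and we follow the tracked ones along the isotopy; an unsimplifying R-I or R-II move is supported in a small disk $\mathcal{D}_i$ that contains none of the tracked crossings (the crossings created by the move are precisely the \emph{new} ones inside $\mathcal{D}_i$), so $T_i$ passes unchanged to $T_{i+1}$; an R-III move is supported in a disk $\mathcal{D}_i$ containing exactly three crossings, and there is a canonical bijection between the three crossings before and after the move pairing those lying between the same two strands, through which any tracked crossings are transferred with the same smoothing type. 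Set $D' := C_n'$, the smoothing of $D$ at $T_n$.

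It remains to show $D' \sim C'$, which I would prove by induction on $i$: namely, the smoothing $C_i'$ of $C_i$ at $T_i$ is homotopic to $C'$ for every $i$. The base case and the ambient isotopy step are immediate. For an unsimplifying R-I or R-II move, the tracked crossings lie outside the supporting disk $\mathcal{D}_i$, so the same move can be performed after smoothing, furnishing an explicit homotopy from $C_i'$ to $C_{i+1}'$. The main obstacle is the R-III step when the three involved crossings include tracked ones; this reduces to a local verification in $\mathcal{D}_i$ that, for each choice of tracked subset among the three crossings and each choice of smoothing types, the concrete multi-arcs obtained before and after R-III with the prescribed crossings smoothed agree up to an isotopy rel $\partial \mathcal{D}_i$. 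Since there are only finitely many such local configurations, this is a direct case-by-case check using the fact that after smoothing any nonempty subset of the three crossings, the R-III move degenerates to an isotopy that slides the remaining strand across the smoothed region. Once this is verified, induction yields $D' = C_n' \sim C_0' = C'$, completing the proof.
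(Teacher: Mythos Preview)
Your proposal is correct and follows essentially the same approach as the paper's proof: reduce to essential self-transverse $D$, invoke Theorem \ref{theo:red}, reverse the Reidemeister sequence, and verify that smoothings can be tracked through each move, with the only nontrivial case being R-III, handled by a finite local check. The paper is slightly terser (it simply says ``checking the $27$ different possible smoothings'' for R-III and claims homotopy rather than isotopy rel $\partial\mathcal{D}_i$), but the skeleton and the key idea are the same.
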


\begin{proof}
	Without loss of generality we can assume that $C$ and $D$ are essential and that $D$ is self-transverse. Indeed, the lemma does not change if null-homotopic components are discarded and self-transversality can be achieved through an arbitrarily small perturbation that does not introduce new crossings. By Theorem \ref{theo:red} there exists a finite sequence of ambient isotopies and Reidemeister I, II, and III moves that transform $D$ into $C$, with the Reidemeister I and II moves being performed only in the simplifying direction. The lemma can then be proved by following this sequence of transformations backwards from $C$ to $D$ and showing that after each step one can find a sequence of smoothings that yields a concrete multi-curve homotopic to $C'$. The only case which requires a non-trivial analysis is that of Reidemeister III moves. Checking the $27$ different possible smoothings of the arcs involved in a Reidemeister III move finishes the proof. See Figure \ref{fig:smooth} for an example.
\end{proof}

\begin{figure}[h]
	\centering
	\begin{subfigure}[b]{0.4\textwidth}
		\centering
		\includegraphics[width=0.5\textwidth]{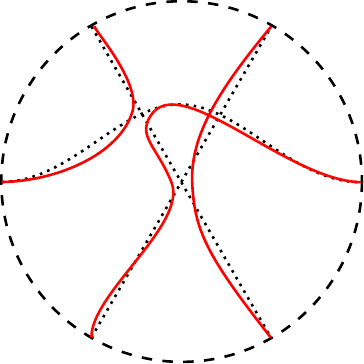}
		\caption{Before Reidemeister III move.}
	\end{subfigure}
	\quad \quad \quad
	~ %add desired spacing between images, e. g. ~, \quad, \qquad, \hfill etc. 
	%(or a blank line to force the subfigure onto a new line)\\
	\begin{subfigure}[b]{0.4\textwidth}
		\centering
		\includegraphics[width=0.5\textwidth]{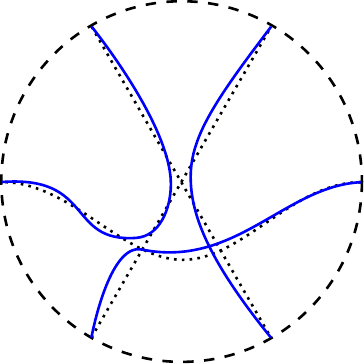}
		\caption{After Reidemeister III move.}
	\end{subfigure}
	\caption{Homotopic smoothings of the arcs involved in a Reidemeister III move.} 
	\label{fig:smooth}
\end{figure}

Let $\alpha$ and $\beta$ be a pair of parametrized simple closed curves on $S_g$ intersecting transversely. We say $\alpha$ and $\beta$ intersect minimally if they realize the minimum number of intersections among pairs of closed curves homotopic to $\alpha$ and $\beta$. We say $\alpha$ and $\beta$ form a bigon if there exists an embedded closed disk in $S_g$ whose boundary is the union of an arc of $\alpha$ and an arc of $\beta$ intersecting at exactly two points. The following well known result provides a criterion for checking when a pair of parametrized simple closed curves intersect minimally.
 
\begin{proposition}
	\label{prop:bigon_crit}
	\cite[Proposition 1.7]{FM11}
	Let $\alpha$ and $\beta$ be a pair of parametrized simple closed curves on $S_g$ intersecting transversely and forming no bigons. Then $\alpha$ and $\beta$ intersect minimally.
\end{proposition}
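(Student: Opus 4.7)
The plan is to lift $\alpha$ and $\beta$ to the universal cover $\mathbf{H}^2$ of $S_g$ (with respect to an auxiliary hyperbolic metric) and express $|\alpha \cap \beta|$ as a count of $\pi_1(S_g)$-orbits of pairs of lifts whose endpoints link on the circle at infinity $\partial \mathbf{H}^2$, a quantity that will be manifestly a homotopy invariant.

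First I would observe that each lift $\tilde\alpha$ of $\alpha$ is a properly embedded line in $\mathbf{H}^2$ with two distinct endpoints on $\partial \mathbf{H}^2$, and that the set of pairs of endpoints arising from lifts of $\alpha$ depends only on the free homotopy class of $\alpha$ (it is the set of fixed-point pairs on $\partial \mathbf{H}^2$ of the conjugates of the element representing $\alpha$ in $\pi_1(S_g)$), and similarly for $\beta$.

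The central technical step, and what I expect to be the main obstacle, is the following \emph{bigon-lifting lemma}: if $\alpha$ and $\beta$ form no bigon in $S_g$, then no lift $\tilde\alpha$ of $\alpha$ and lift $\tilde\beta$ of $\beta$ together bound a bigon in $\mathbf{H}^2$. The argument is an innermost disk argument. Assuming for contradiction that some pair of lifts bounds a bigon, one selects an innermost such bigon $D \subseteq \mathbf{H}^2$ whose interior meets no other lifts of $\alpha$ or $\beta$. One then must verify that the covering projection $\mathbf{H}^2 \to S_g$ restricts to an embedding on $D$: any non-trivial deck transformation identifying two points of $D$ would, by analyzing its action on $\partial D$, either preserve $\tilde\alpha$ or $\tilde\beta$ setwise (impossible because the lifts are non-compact axes while $D$ is compact) or produce a strictly smaller bigon inside $D$, contradicting innermostness. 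Projecting $D$ then yields a bigon between $\alpha$ and $\beta$ in $S_g$, contrary to hypothesis.

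Given the bigon-lifting lemma, any two lifts of $\alpha$ and $\beta$ meet transversely in at most one point, and they meet if and only if their pairs of endpoints link on $\partial \mathbf{H}^2$. Consequently $|\alpha \cap \beta|$ equals the number of $\pi_1(S_g)$-orbits of pairs of lifts whose endpoints link, a quantity depending only on the free homotopy classes of $\alpha$ and $\beta$. For any other pair $(\alpha',\beta')$ of transverse curves homotopic to $(\alpha,\beta)$, the lifts of $\alpha'$ and $\beta'$ have the same endpoints on $\partial \mathbf{H}^2$, so each linking pair of lifts must still contribute at least one intersection point of $\alpha'$ with $\beta'$, giving $|\alpha \cap \beta| \leq |\alpha' \cap \beta'|$ and hence minimality.
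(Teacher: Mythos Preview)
The paper does not supply a proof of this proposition; it simply quotes it from Farb--Margalit \cite[Proposition 1.7]{FM11}. Your proposal is precisely the argument given there: lift to the universal cover, prove the bigon-lifting lemma by an innermost-bigon argument, conclude that any two lifts intersect at most once and do so exactly when their endpoints link on $\partial\mathbf{H}^2$, and then observe that the resulting count of $\pi_1(S_g)$-orbits of linking pairs is a homotopy invariant that lower-bounds $|\alpha'\cap\beta'|$ for any competing pair. So your approach is correct and is the standard one.

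One small caution on the embedding step in your bigon-lifting lemma: the cleanest way to see that an innermost bigon $D$ projects injectively is to note that a non-trivial deck transformation identifying two points of $D$ would have to carry $D$ into itself (by innermostness), hence fix a point of the compact disk $D$, contradicting the fact that non-trivial deck transformations of a hyperbolic surface act freely on $\mathbf{H}^2$. Your version, arguing via ``preserve $\tilde\alpha$ or $\tilde\beta$ setwise,'' is a bit indirect and the case analysis as stated is not quite complete; the fixed-point argument is sharper.
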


We now extend the definition of $\SExt_X$ to concrete multi-curves. Let $X \in \mathcal{T}_g$ be a marked complex structure on $S_g$. Fix a Riemannian metric $h$ on $S_g$ in the conformal class of $X$. Denote by $dA_h$ and $ds_h$ the area and length elements of $h$. Every metric on $S_g$ in the conformal class of $X$ can be obtained as a product $\rho h$ for some non-negative measurable function $\rho \colon S_g \to \mathbf{R}_{\geq 0}$. For any such function define
\[
\mathrm{Area}(\rho h) := \int_X \rho^2 \thinspace dA_h.
\]
Let $C$ be a concrete multi-curve on $S_g$. For every non-negative measurable function $\rho \colon S_g \to \mathbf{R}$ define
\[
\ell_{\rho h}(C) := \inf_{D \sim C} \thinspace \sum_{i=1}^k \thinspace \int_{d_i} \rho \thinspace ds_h,
\]
where the infimum runs over all concrete multi-curves $D := (d_i)_{i=1}^k$ homotopic to $C$. The square root of the extremal length of $C$ with respect to $X$ is defined as
\[
\SExt_X(C) := \sup_{\rho} \frac{\ell_{\rho h}(C)}{\sqrt{\mathrm{Area}(\rho h)}},
\]
where the supremum runs over all non-negative measurable functions $\rho \colon S_g \to \mathbf{R}_{\geq 0}$ such that $0 < \mathrm{Area}(\rho h)<+\infty$. The following result, which is a direct consequence of the definitions and Lemma \ref{lem:smooth}, describes the behavior of $\SExt_X$ under the operation of smoothing out some of the crossings of a minimally self-intersecting concrete multi-curve. For a related result see \cite[Lemma 4.16]{MT20}.

\begin{lemma}
	\label{lem:ext_smooth}
	Let $X \in \mathcal{T}_g$ and $C$ be a self-transverse concrete multi-curve on $S_g$ minimizing the number of self-crossings in its homotopy class and having no pair of components homotopic to distinct powers of the same closed curve. Then, for every smoothing $C'$ of $C$,
	\[
	\SExt_X(C') \leq \SExt_X(C).
	\]
\end{lemma}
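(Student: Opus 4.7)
The plan is to use the variational definition of $\SExt_X$ to reduce the claim to a length estimate for concrete multi-curves. Fix any non-negative measurable $\rho \colon S_g \to \mathbf{R}_{\geq 0}$ with $0 < \mathrm{Area}(\rho h) < +\infty$, and let $L_{\rho h}(\cdot)$ denote the total $\rho h$-length of a concrete multi-curve. It suffices to show $\ell_{\rho h}(C') \leq \ell_{\rho h}(C)$, since taking the supremum over such $\rho$ then yields $\SExt_X(C') \leq \SExt_X(C)$.

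Let $D$ be an arbitrary concrete multi-curve homotopic to $C$. After an arbitrarily small perturbation that changes $L_{\rho h}(D)$ by an arbitrarily small amount, we may assume $D$ is essential and self-transverse. By Lemma \ref{lem:smooth}, applied with $C$ in the role of the minimally self-intersecting concrete multi-curve and $C'$ in the role of a fixed smoothing, $D$ admits a smoothing $D'$ homotopic to $C'$. The homotopy class of $D'$ is prescribed by a combinatorial choice of smoothing at each relevant crossing, but its concrete realization in $S_g$ is our choice.

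The heart of the argument is to show that for every $\eta > 0$ one can realize $D'$ with $L_{\rho h}(D') \leq L_{\rho h}(D) + \eta$. At each crossing point $p$ being smoothed, localize to a small topological disk about $p$, label the four legs of the X-pattern as $A_1, A_2, A_3, A_4$, and suppose the prescribed smoothing pairs the outer endpoints of $A_1, A_2$ and of $A_3, A_4$. Since $(A,B) \mapsto \inf_\gamma \int_\gamma \rho \thinspace ds_h$ defines a pseudometric on $S_g$, the triangle inequality yields, for any $\epsilon > 0$, a path from the outer endpoint of $A_i$ to that of $A_j$ of $\rho h$-length at most $L_{\rho h}(A_i) + L_{\rho h}(A_j) + \epsilon$, which after a further small perturbation avoids $p$ and all other parts of $D$. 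Replacing the four legs by the two resulting paths at each smoothed crossing and distributing $\eta$ across the finitely many such crossings proves the claim.

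Combining these estimates gives $\ell_{\rho h}(C') \leq L_{\rho h}(D') \leq L_{\rho h}(D) + \eta$; letting $\eta \to 0$ and taking the infimum over $D$ homotopic to $C$ gives the required bound $\ell_{\rho h}(C') \leq \ell_{\rho h}(C)$. The main technical subtlety is controlling the $\rho h$-length of the triangle-inequality paths when $\rho$ is only measurable and ensuring that the small smoothing disks can be chosen to be pairwise disjoint and disjoint from the remaining components of $D$; the former is handled via a Fubini argument selecting smoothing disks whose boundary circles carry finite $\rho h$-length, while the latter is achieved by working at sufficiently small scales.
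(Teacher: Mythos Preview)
Your proof is correct and follows essentially the same route as the paper: fix $\rho$, take a near-minimizer $D$ for $\ell_{\rho h}(C)$, invoke Lemma~\ref{lem:smooth} to obtain a smoothing $D'$ of $D$ homotopic to $C'$, and use that smoothing does not increase $\rho h$-length. One small remark: the perturbation of $D$ to make it self-transverse is both unnecessary (Lemma~\ref{lem:smooth} already applies to arbitrary $D$) and not obviously harmless for merely measurable $\rho$, so you should simply drop it; likewise, since the smoothed arcs can be realized using exactly the same point sets as the original legs, the $\eta$ and Fubini arguments are superfluous and the paper just asserts the length inequality directly.
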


\begin{proof}
	Let $h$ be a metric in the conformal class of $X \in \mathcal{T}_g$ and $C'$ be a smoothing of the concrete multi-curve $C$ on $S_g$. Fix a non-negative measurable function $\rho \colon S_g \to \mathbf{R}_{\geq 0}$ such that $\mathrm{Area}(\rho h) > 0$. Let $\epsilon > 0$ be arbitrary. Consider a concrete multi-curve $D := (d_i)_{i=1}^k$  homotopic to $C$ such that
	\[
	\sum_{i=1}^k \thinspace \int_{d_i} \rho \thinspace ds_h \leq \ell_{\rho h}(C) + \epsilon.
	\]
	By Lemma \ref{lem:smooth}, $D$ has a smoothing $D'$ homotopic to $C'$. As smoothing out some of the crossings of a concrete multi-curve on $S_g$ does not increase its length with respect to the metric $\rho h$ we deduce
	\[
	\ell_{\rho h}(C') \leq \sum_{i=1}^k \thinspace \int_{d_i} \rho \thinspace ds_h \leq \ell_{\rho h}(C) + \epsilon.
	\]
	As $\epsilon > 0$ is arbitrary it follows that
	\[
	\ell_{\rho h}(C') \leq \ell_{\rho h}(C).
	\]
	Dividing by $\sqrt{\mathrm{Area}(\rho h)} > 0$ we deduce
	\[
	\frac{\ell_{\rho h}(C')}{\sqrt{\mathrm{Area}(\rho h)}} \leq \frac{\ell_{\rho h}(C)}{\sqrt{\mathrm{Area}(\rho h)}}.
	\]
	Taking supremum over all measurable functions $\rho \colon S_g \to \mathbf{R}_{\geq 0}$ such that $\mathrm{Area}(\rho h) > 0$ we conclude
	\[
	\SExt_X(C') \leq \SExt_X(C). \qedhere
	\]
\end{proof}

Given a pair of concrete multi-curves $C_1$ and $C_2$ on $S_g$ denote by $C_1 \cup C_2$ the concrete multi-curve obtained by taking the union of the components of $C_1$ and $C_2$. The following result, which is a direct consequence of the definitions, describes the effect on $\SExt_X$ of applying this operation. 

\begin{lemma}
	\label{lem:ext_cup}
	\cite[Lemma 4.17]{MT20}
	Let $X \in \mathcal{T}_g$ and $C_1,C_2$ be concrete multi-curves on $S_g$. Then,
	\[
	\SExt_X(C_1 \cup C_2) \leq \SExt_X(C_1) + \SExt_X(C_2).
	\]
\end{lemma}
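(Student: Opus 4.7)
The plan is to prove the subadditivity inequality directly from the variational definition of $\SExt_X$, exploiting the fact that taking square roots turns the natural $L^2$-type minimization into a seminorm-like quantity that obeys the triangle inequality.

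First I would fix a Riemannian metric $h$ on $S_g$ in the conformal class of $X$ and a non-negative measurable function $\rho \colon S_g \to \mathbf{R}_{\geq 0}$ with $0 < \mathrm{Area}(\rho h) < +\infty$. The main observation is that the $\rho h$-length is subadditive under the union operation: if $D_1 := (d_i^1)_{i=1}^{k_1}$ is homotopic to $C_1$ and $D_2 := (d_i^2)_{i=1}^{k_2}$ is homotopic to $C_2$, then $D_1 \cup D_2$ is a concrete multi-curve homotopic to $C_1 \cup C_2$. Taking the infimum over $D_1$ and $D_2$ independently in the definition of $\ell_{\rho h}$ yields
\[
\ell_{\rho h}(C_1 \cup C_2) \leq \ell_{\rho h}(C_1) + \ell_{\rho h}(C_2).
\]

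Next I would divide both sides by $\sqrt{\mathrm{Area}(\rho h)}>0$ and bound each of the two resulting summands on the right by the corresponding supremum, giving
\[
\frac{\ell_{\rho h}(C_1 \cup C_2)}{\sqrt{\mathrm{Area}(\rho h)}} \leq \frac{\ell_{\rho h}(C_1)}{\sqrt{\mathrm{Area}(\rho h)}} + \frac{\ell_{\rho h}(C_2)}{\sqrt{\mathrm{Area}(\rho h)}} \leq \SExt_X(C_1) + \SExt_X(C_2).
\]
Finally, taking the supremum of the left hand side over all admissible $\rho$ yields the desired inequality $\SExt_X(C_1 \cup C_2) \leq \SExt_X(C_1) + \SExt_X(C_2)$.

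There is no real obstacle here: the only subtlety is noting that the suprema on the right hand side are attained using the same $\rho$, so no Cauchy-Schwarz or rearrangement is needed (which would be required if one worked with the extremal length itself, rather than its square root). The reason the argument works cleanly at the level of $\SExt_X$ is precisely that we are taking the supremum of a ratio in which the numerator $\ell_{\rho h}$ is subadditive and the denominator $\sqrt{\mathrm{Area}(\rho h)}$ is a fixed positive quantity depending only on $\rho$.
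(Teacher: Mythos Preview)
Your proof is correct and follows essentially the same route as the paper's: fix $\rho$, use additivity (or subadditivity) of $\ell_{\rho h}$ under $\cup$, divide by $\sqrt{\mathrm{Area}(\rho h)}$, and take the supremum. The only cosmetic difference is that the paper asserts the equality $\ell_{\rho h}(C_1\cup C_2)=\ell_{\rho h}(C_1)+\ell_{\rho h}(C_2)$ (which holds since any $D$ homotopic to $C_1\cup C_2$ splits componentwise), whereas you prove the inequality $\leq$, which already suffices.
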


\begin{proof}
	Let $h$ be a metric in the conformal class of $X \in \mathcal{T}_g$.  Fix a non-negative measurable function $\rho \colon S_g \to \mathbf{R}_{\geq 0}$ such that $\mathrm{Area}(\rho h) > 0$. Directly from the definitions it follows that
	\[
	\ell_{\rho h}(C_1 \cup C_2) = \ell_{\rho h}(C_1) + \ell_{\rho h}(C_2).
	\]
	Dividing by $\sqrt{\mathrm{Area}(\rho h)} > 0$ we deduce
	\[
	\frac{\ell_{\rho h}(C_1 \cup C_2)}{\sqrt{\mathrm{Area}(\rho h)}} \leq \frac{\ell_{\rho h}(C_1)}{\sqrt{\mathrm{Area}(\rho h)}} + \frac{\ell_{\rho h}(C_2)}{\sqrt{\mathrm{Area}(\rho h)}}.
	\]
	Taking supremum over all measurable functions $\rho \colon S_g \to \mathbf{R}_{\geq 0}$ such that $\mathrm{Area}(\rho h) > 0$ we conclude
	\[
	\SExt_X(C_1 \cup C_2) \leq \SExt_X(C_1) + \SExt_X(C_2). \qedhere
	\]
\end{proof}

We are now ready to prove that the functions $\SExt_X \colon \mf \to \mathbf{R}$ are convex in train track coordinates. It will be important to recall that these functions are homogeneous with respect to the natural $\mathbf{R}^+$ scaling action on $\mf$. The following proof is inspired by arguments introduced by Mirzakhani in the proof of \cite[Theorem A.1]{Mir04}.

\begin{theorem}
	\label{theo:ext_convex}
	Let $\tau$ be a maximal train track on $S_g$, $\Phi_\tau \colon U(\tau) \to V(\tau)$ be the  train track chart induced by $\tau$ on $\mf$, and $X \in \mathcal{T}_g$. Then, the composition $\SExt_X \circ \Phi_\tau^{-1} \colon V(\tau) \to \mathbf{R}$ is convex.
\end{theorem}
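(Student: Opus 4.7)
The plan is to exploit the positive homogeneity of $\SExt_X$ together with the additivity of the train track chart. Since $\SExt_X$ is positively homogeneous of degree $1$ on $\mf$ and $\Phi_\tau$ identifies the operation of taking weighted sums of singular measured foliations carried by $\tau$ with addition of counting measures, the function $\SExt_X \circ \Phi_\tau^{-1} \colon V(\tau) \to \mathbf{R}$ is positively homogeneous of degree $1$ on the convex cone $V(\tau)$. For such a function, convexity is equivalent to sub-additivity, so it is enough to show that for every $v_1, v_2 \in V(\tau)$,
\[
\SExt_X\bigl(\Phi_\tau^{-1}(v_1 + v_2)\bigr) \leq \SExt_X\bigl(\Phi_\tau^{-1}(v_1)\bigr) + \SExt_X\bigl(\Phi_\tau^{-1}(v_2)\bigr).
\]

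By continuity of $\SExt_X \colon \mf \to \mathbf{R}$ (Kerckhoff's theorem) and density of rational vectors in $V(\tau)$, it suffices to prove this inequality when $v_1$ and $v_2$ are rational. Clearing denominators via homogeneity, we further reduce to the case where $v_1$ and $v_2$ are integral, so that each $\lambda_i := \Phi_\tau^{-1}(v_i)$ is represented by a simple closed multi-curve $C_i$ carried by $\tau$ whose strands run parallel to the edges of $\tau$ with multiplicities prescribed by $v_i$.

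Choose smooth embedded representatives of $C_1$ and $C_2$ inside a common tubular neighborhood of $\tau$ so that on each edge the strands of $C_1$ and $C_2$ run parallel to the edge. The concrete multi-curve $C_1 \cup C_2$ is then carried by $\tau$ with train track weights $v_1 + v_2$, and all of its self-crossings occur inside branches of $\tau$ where parallel strands cross each other. Smoothing these crossings consistently along the direction of the corresponding branch produces a concrete multi-curve $C$ with no self-crossings, still carried by $\tau$, whose train track weights are again $v_1 + v_2$. Thus $C$ represents $\Phi_\tau^{-1}(v_1 + v_2)$. Combining Lemma \ref{lem:ext_smooth}, which gives $\SExt_X(C) \leq \SExt_X(C_1 \cup C_2)$, with Lemma \ref{lem:ext_cup}, which gives $\SExt_X(C_1 \cup C_2) \leq \SExt_X(C_1) + \SExt_X(C_2)$, then yields the desired sub-additivity estimate.

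The main obstacle is to verify the hypotheses of Lemma \ref{lem:ext_smooth} for the concrete multi-curve $C_1 \cup C_2$: that it minimizes the number of self-crossings in its homotopy class, and that no two of its components are homotopic to distinct powers of a common closed curve. The first condition I expect to follow from the bigon criterion (Proposition \ref{prop:bigon_crit}) together with the fact that the negative Euler characteristic condition on the complementary regions of a maximal train track prevents strands carried by $\tau$ from bounding bigons. The second condition is more delicate and can be arranged on a dense open subset of $V(\tau) \times V(\tau)$ by small generic perturbations that break any coincidence of powers, with the full statement then following from continuity of $\SExt_X$.
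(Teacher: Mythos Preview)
Your approach is essentially the same as the paper's: reduce convexity to sub-additivity via homogeneity, pass to integral weights by density and continuity, realize the sum as a smoothing of a union, and conclude via Lemmas~\ref{lem:ext_smooth} and~\ref{lem:ext_cup}. Two points deserve comment.

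First, your concern about the ``no distinct powers'' hypothesis of Lemma~\ref{lem:ext_smooth} is misplaced. Every component of a simple multi-curve is a primitive closed curve, so two components that are powers of a common curve must in fact be freely homotopic to that curve itself; they are never \emph{distinct} powers. The hypothesis is therefore automatic, and no perturbation argument is needed. (Your proposed fix is also in tension with the reduction you already made: you cannot genericize integral points and stay integral.) The paper sidesteps the issue entirely by reducing one step further than you do: since rationally weighted \emph{simple closed curves} (not multi-curves) are dense in $U(\tau)$, it suffices to take $v_1 = \Phi_\tau(n\alpha)$ and $v_2 = \Phi_\tau(m\beta)$ for single simple closed curves $\alpha,\beta$ carried by $\tau$.

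Second, the genuine technical content you leave unverified is the minimal-self-crossing hypothesis. You correctly point to the bigon criterion, but ``negative Euler characteristic of the complementary regions prevents bigons'' is not quite an argument. The paper fills this in as follows: realize $\alpha$ and $\beta$ in a tie neighborhood $W$ of $\tau$, transverse to each other and to the ties, and extend the tie foliation to a singular foliation $\mathcal{F}$ of $S_g$ with a three-pronged singularity in each complementary trigon. Any bigon between $\alpha$ and $\beta$ not contained in $W$ would enclose a singularity of $\mathcal{F}$, and doubling such a foliated bigon to a torus contradicts Poincar\'e--Hopf. Hence all bigons lie in $W$ and can be isotoped away while staying carried by $\tau$; taking $n$ parallel copies of $\alpha$ and $m$ of $\beta$ then gives a minimally self-intersecting $C = n\alpha \cup m\beta$, and smoothing all crossings transverse to the ties produces the carried curve with weights $v_1 + v_2$.
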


\begin{proof}
	As the composition $\SExt_X \circ \Phi_\tau^{-1} \colon V(\tau) \to \mathbf{R}$ is homogeneous, to show it is convex, it is enough to verify that for every pair of counting measures $u,v \in V(\tau)$,
	\begin{equation}
	\label{eq:Z1}
	\SExt_X \circ \Phi_\tau^{-1}(u+v) \leq \SExt_X \circ \Phi_\tau^{-1} (u) + \SExt_X \circ \Phi_\tau^{-1} (v).
	\end{equation}
	As the set of rationally weighted simple closed curves carried by $\tau$ is dense in $U(\tau)$, it is enough to verify (\ref{eq:Z1}) for pairs of counting measures $u,v \in V(\tau)$ coming from this set. Furthermore, as $\SExt_X \circ \Phi_\tau^{-1} \colon V(\tau) \to \mathbf{R}$ is homogeneous, it is enough to verify (\ref{eq:Z1}) for pairs of counting measures $u,v \in V(\tau)$ coming from integral multiples of simple closed curves carried by $\tau$. 
	
	Let $\alpha,\beta \in U(\tau)$ be a pair of simple closed curves carried by $\tau$ and $n,m \in \mathbf{N}^+$. Consider the counting measures $u  := \Phi_\tau(n \alpha) \in V(\tau)$ and $v:= \Phi_\tau(m \beta) \in V(\tau)$. To show (\ref{eq:Z1}) holds we will explicitly construct a concrete multi-curve in the homotopy class of $\Phi_\tau^{-1}(u+v) \in U(\tau)$. Consider a neighborhood $W \subseteq S_g$ of $\tau$ foliated by transverse ties as in Figure \ref{fig:ties}. Extend this foliation to a singular foliation $\mathcal{F}$ of $S_g$ with one three-pronged singularity in each complementary trigon of $\tau$. Realize $\alpha$ and $\beta$ as parametrized simple closed curves in $W$ transverse to each other and to the foliation by ties.
	
	\begin{figure}[h!]
		\centering
		\includegraphics[width=.25\textwidth]{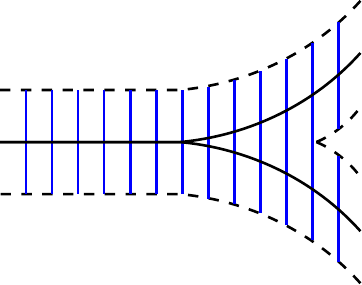}
		\caption{Neighborhood of a train track foliated by ties.} \label{fig:ties} 
	\end{figure}
	
	We claim that any bigons formed by $\alpha$ and $\beta$ must lie completely inside $W$. Indeed, if this was not the case then $\alpha$ and $\beta$ would form a bigon containing a singularity of $\mathcal{F}$ and through the procedure described in Figure \ref{fig:bigon} one would be able to construct a singular foliation of a torus with at least one three-pronged singularity, contradicting the Poincaré-Hopf theorem. 
	
	\begin{figure}[h]
		\centering
		\begin{subfigure}[b]{0.4\textwidth}
			\centering
			\includegraphics[width=0.6\textwidth]{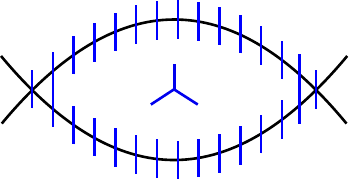} \vspace{+0.3cm}
			\caption{Foliated bigon.}
			\label{fig:A}
		\end{subfigure} 
		\quad \quad \quad
		~ %add desired spacing between images, e. g. ~, \quad, \qquad, \hfill etc. 
		%(or a blank line to force the subfigure onto a new line)\\
		\begin{subfigure}[b]{0.4\textwidth}
			\centering
			\includegraphics[width=0.4\textwidth]{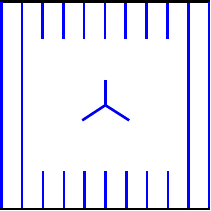}
			\caption{Foliated square.}
			\label{fig:B}
		\end{subfigure}
		\caption{Constructing a singular foliation of a torus with at least one three-pronged singularity. Deform a foliated bigon as the one in Figure \ref{fig:A} into a foliated square as the one in Figure \ref{fig:B} and identify opposite sides of the square to get a foliated torus.} 
		\label{fig:bigon}
	\end{figure}
	
	As all bigons formed by $\alpha$ and $\beta$ lie completely inside $W$, one can isotope out all such bigons, starting with the innermost one, to realize $\alpha$ and $\beta$ as parametrized simple closed curves in $W$ transverse to each other, to the foliation by ties, and forming no bigons. In particular, by Proposition \ref{prop:bigon_crit}, such realizations of $\alpha$ and $\beta$ intersect minimally. Consider $n$ disjoint parallel copies of $\alpha$ and $m$ disjoint parallel copies of $\beta$ obtained by translating $\alpha$ and $\beta$ a small distance in the direction of the ties so that any pair of copies of $\alpha$ and $\beta$ are transverse to each other and intersect minimally. Let $C =  n\alpha \cup m\beta$ be the concrete multi-curve obtained by taking the $\cup$ operation over the $n$ parallel copies of $\alpha$ and $m$ parallel copies of $\beta$. Denote by $C'$ the concrete multi-curve obtained by smoothing out all the crossings of $C$ in the direction transverse to the ties as in Figure \ref{fig:tie_smooth}. By Lemmas \ref{lem:ext_smooth} and \ref{lem:ext_cup},
	\begin{equation}
	\label{eq:Z}
	\SExt_X(C') \leq \SExt_X(C) \leq \SExt_X(n\alpha) + \SExt_X(m\beta).
	\end{equation}
	
	\begin{figure}[h]
		\centering
		\begin{subfigure}[b]{0.4\textwidth}
			\centering
			\includegraphics[width=0.45\textwidth]{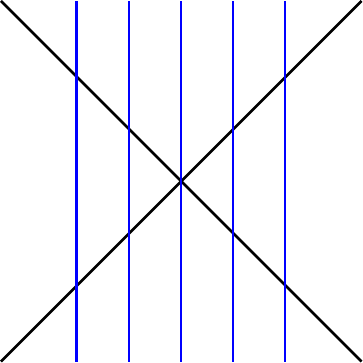}
			\caption{Before smoothing out.}
		\end{subfigure}
		\quad \quad \quad
		~ %add desired spacing between images, e. g. ~, \quad, \qquad, \hfill etc. 
		%(or a blank line to force the subfigure onto a new line)\\
		\begin{subfigure}[b]{0.4\textwidth}
			\centering
			\includegraphics[width=0.45\textwidth]{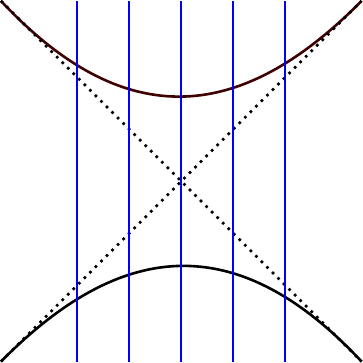}
			\caption{After smoothing out.}
		\end{subfigure}
		\captionsetup{width=\linewidth}
		\caption{Smoothing out a crossing in the direction transverse to the ties.} 
		\label{fig:tie_smooth}
	\end{figure}
	
	The concrete multi-curve $C'$ is carried by $\tau$ and its corresponding counting measure is precisely $u+v \in V(\tau)$. From (\ref{eq:Z}) we conclude (\ref{eq:Z1}) holds, thus finishing the proof of the theorem.
\end{proof}

\subsection*{Convex functions are Lipschitz.} We now discuss some general facts about convex functions in Euclidean spaces. Let $\Omega \subseteq \mathbf{R}^n$ be an open convex subset and $K \subseteq \Omega$ be a compact subset. Denote by $d(K,\partial\Omega)$ the Euclidean distance between $K$ and $\partial \Omega \subseteq \mathbf{R}^n$ with the convention that $d(K,\partial\Omega) = +\infty$ if $\Omega = \mathbf{R}^n$. Let $r = r(\Omega,K) := \min\{1,d(K,\partial\Omega) /2\}$. Denote by $\mathrm{Nbhd}_r(K) \subseteq \Omega$ the set of points in $\mathbf{R}^n$ at Euclidean distance at most $r$ from $K$. Let $f \colon \Omega \to \mathbf{R}$ be a convex function. Denote 
\[
L = L(\Omega,K,f) := \textstyle \smash{2 \cdot r^{-1} \cdot \sup_{x \in \mathrm{Nbhd}_r(K)} |f(x)|}.
\]

The following well known result shows that convex functions on Euclidean spaces are Lipschitz when restricted to compact subsets of their domain.

\begin{proposition}
	\label{prop:convex_lip}
	Let $\Omega \subseteq \mathbf{R}^n$ be an open, convex subset and $f \colon \Omega \to \mathbf{R}$ be a convex function. Then, for every $K \subseteq \Omega$ compact, the restriction $f|_K \colon K \to \mathbf{R}$ is $L$-Lipschitz for $L = L(\Omega,K,f)$ as above.
\end{proposition}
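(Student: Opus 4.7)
The plan is to execute the classical ``pivot point'' argument for convex functions: given two points $x,y \in K$, extend slightly past $y$ along the ray from $x$ through $y$ by a fixed amount $r$ to obtain an auxiliary point $z \in \mathrm{Nbhd}_r(K)$, write $y$ as an explicit convex combination of $x$ and $z$, and apply the convexity inequality to control $f(y) - f(x)$ by the oscillation of $f$ on $\mathrm{Nbhd}_r(K)$.

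More precisely, first I would verify that $\mathrm{Nbhd}_r(K) \subseteq \Omega$. This uses that $K$ is compact and $\partial \Omega$ is closed and disjoint from $K$, so $d(K,\partial \Omega) > 0$ (or $+\infty$ if $\Omega = \mathbf{R}^n$); by construction $r \leq d(K,\partial\Omega)/2$, so every point within Euclidean distance $r$ of $K$ lies in $\Omega$. Fix distinct $x,y \in K$ and set
\[
z := y + r \cdot \frac{y-x}{\|y-x\|}.
\]
Then $\|z-y\| = r$, so $z \in \mathrm{Nbhd}_r(K) \subseteq \Omega$. A direct computation shows
\[
y = (1-t)\,x + t\,z, \qquad t := \frac{\|y-x\|}{r + \|y-x\|} \in (0,1),
\]
so convexity of $f$ on $\Omega$ yields $f(y) - f(x) \leq t \bigl(f(z) - f(x)\bigr)$.

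Using $t \leq \|y-x\|/r$ together with $|f(z)-f(x)| \leq 2 \sup_{w \in \mathrm{Nbhd}_r(K)} |f(w)|$ (both $x$ and $z$ lie in $\mathrm{Nbhd}_r(K)$), this gives
\[
f(y) - f(x) \leq \frac{2}{r} \sup_{w \in \mathrm{Nbhd}_r(K)} |f(w)| \cdot \|y-x\| = L \cdot \|y-x\|.
\]
Exchanging the roles of $x$ and $y$ in the construction yields the reverse one-sided inequality, and together these give $|f(x)-f(y)| \leq L \|x-y\|$, as desired.

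There is no real obstacle here; the only subtle point is the finiteness of $\sup_{\mathrm{Nbhd}_r(K)} |f|$, but this is automatic since convex functions on open convex subsets of $\mathbf{R}^n$ are continuous and hence bounded on the compact set $\mathrm{Nbhd}_r(K) \subseteq \Omega$ (and if one wishes to avoid citing continuity, the statement is vacuous when the supremum is infinite).
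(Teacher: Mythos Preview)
Your argument is the standard and correct proof of this classical fact. The paper itself does not supply a proof of Proposition~\ref{prop:convex_lip}; it is stated as a ``well known result'' and used as a black box, so there is nothing to compare against beyond noting that your proof fills in exactly the content the paper takes for granted.
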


Let $C \subseteq \smash{(\mathbf{R}_{>0})^n}$ be a cone cut out by homogeneous linear equations and $D \subseteq C$ be a projectively compact cone. Denote by $D' \subseteq  \smash{(\mathbf{R}_{>0})^n}$ the set of points in $D$ with Euclidean norm between $1$ and $2$. Let $\partial C \subseteq \smash{(\mathbf{R}_{\geq 0})^n}$ be the boundary of $C$ when considered as a subset of $\smash{(\mathbf{R}_{\geq 0})^n}$. Denote by $d(D',\partial C)$ the Euclidean distance between $D'$ and $\partial C$. Let $r' = r'(C,D) := \min\{1,d(D',\partial C) /2\}$. Denote by $\mathrm{Nbhd}_{r'}(D') \subseteq \smash{(\mathbf{R}_{>0})^n}$ the set of points in $ \smash{(\mathbf{R}_{>0})^n}$ at Euclidean distance at most $r'$ from $D'$. Let $f \colon C \to \mathbf{R}$ be a convex, homogeneous function. Denote 
\[
L' = L'(C,D,f) := \textstyle \smash{2 \cdot (r')^{-1} \cdot \sup_{x \in \mathrm{Nbhd}_{r'}(D')} |f(x)|}.
\]

Directly from Proposition \ref{prop:convex_lip} we deduce the following result.

\begin{proposition}
	\label{prop:convex_lip_2}
	Let $C \subseteq \smash{(\mathbf{R}_{>0})^n}$ be a cone cut out by homogeneous linear equations and $f \colon C \to \mathbf{R}$ be a convex, homogeneous function. Then, for every projectively compact cone $D \subseteq C$, the restriction $f|_{D} \colon D \to \mathbf{R}$ is $L'$-Lipschitz for $L' = L'(C,D,f)$ as above.
\end{proposition}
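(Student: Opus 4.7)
The plan is to reduce the statement to Proposition \ref{prop:convex_lip} and then use the $1$-homogeneity of $f$ to propagate the Lipschitz bound from the bounded annular slice $D'$ to the whole cone $D$.

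First, I would apply Proposition \ref{prop:convex_lip} to the open convex set $C$ (viewed inside its linear span, with the Euclidean structure inherited from $\mathbf{R}^n$) with the compact subset $K := D'$. Projective compactness of $D$ implies that $D \cap S^{n-1}$ is compact, and hence $D'$, being its scaling by the compact interval $[1, 2]$, is itself compact. A direct comparison of the formulas shows that the Lipschitz constant $L(C, D', f)$ supplied by Proposition \ref{prop:convex_lip} coincides with the quantity $L'(C, D, f)$ in the statement, so $f|_{D'}$ is $L'$-Lipschitz.

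Next, I would extend this bound to all of $D$ by a scaling argument. The decisive observation is that because $f$ and the norm are both $1$-homogeneous, the Lipschitz quotient $|f(x) - f(y)|/\|x - y\|$ is scale-invariant: for every $\lambda > 0$,
\[
\frac{|f(\lambda x) - f(\lambda y)|}{\|\lambda x - \lambda y\|} = \frac{|f(x) - f(y)|}{\|x - y\|}.
\]
Given $x, y \in D$ with $\|x\| \leq \|y\|$, if $\|y\| \leq 2\|x\|$ then the choice $\lambda := 1/\|x\|$ places both $\lambda x$ and $\lambda y$ in $D'$; applying the first step to the pair $(\lambda x, \lambda y)$ and rescaling yields $|f(x) - f(y)| \leq L' \|x - y\|$.

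The remaining case $\|y\| > 2\|x\|$ I would handle by interpolating along the segment $z(t) := (1 - t)x + ty$, $t \in [0, 1]$, which stays in the convex cone $D$. The function $\psi(t) := \|z(t)\|$ is convex on $[0, 1]$ and, since $D \subseteq (\mathbf{R}_{>0})^n$ does not meet the origin, strictly positive on this compact interval, and hence bounded below by some $m > 0$. I would subdivide $[0, 1]$ around the argmin of $\psi$ into finitely many subintervals $0 = t_0 < t_1 < \cdots < t_N = 1$ whose endpoint norms differ by a factor of at most $2$, apply the preceding case to each pair $(z(t_i), z(t_{i+1}))$, and conclude via collinearity:
\[
|f(x) - f(y)| \leq \sum_{i=0}^{N-1} |f(z(t_i)) - f(z(t_{i+1}))| \leq L' \sum_{i=0}^{N-1} \|z(t_i) - z(t_{i+1})\| = L' \|x - y\|.
\]
There is no genuine obstacle here; the only points requiring care are finiteness of the subdivision, which follows from positivity of $m$, and the matching of the constants from Proposition \ref{prop:convex_lip} with the quantity $L'$ defined in the statement, which is a direct inspection of the definitions.
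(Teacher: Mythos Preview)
Your argument is correct and matches the paper's approach, which simply records that the result follows ``directly from Proposition \ref{prop:convex_lip}''; you have supplied the homogeneity-based scaling argument that makes this reduction precise. The only point to watch is that your interpolation step uses that the segment $[x,y]$ remains in $D$, i.e., that $D$ is convex---this is the intended reading of ``cone'' here and holds in every application in the paper, though it is not stated explicitly in the proposition.
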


\subsection*{Lipschitz functions in Dehn-Thurston coordinates.} We are now ready to prove the main results of this section. Recall that $\Sigma$ denotes the piecewise linear manifold $\IT:= \mathbf{R}^2 / \langle-1\rangle$ endowed with the quotient Euclidean metric. Recall that the product $\IT^{3g-3}$ is a piecewise linear manifold which we endow with the $L^2$ product metric. Recall that any set of Dehn-Thurston coordinates provides a homeomorphism $F \colon \mf \to \smash{\IT^{3g-3}}$ equivariant with respect to the natural $\mathbf{R}^+$ scaling actions on $\mf$ and $\smash{\IT^{3g-3}}$. Recall that the change of coordinates maps between Dehn-Thurston coordinates and train track coordinates are piecewise linear. The following result shows that geometric intersection numbers with respect to arbitrary geodesic currents are Lipschitz in Dehn-Thurston coordinates.

\begin{theorem}
	\label{theo:curr_lip}
	Let $F \colon \mf \to \smash{\IT^{3g-3}}$ be a set of Dehn-Thurston coordinates of $\mf$ and $K \subseteq \mathcal{C}_g$ be a compact subset of geodesic currents on $S_g$. Then, there exists a constant $L  = L(F,K) > 0$ such that for every $\alpha \in K$ the composition $i(\alpha,\cdot) \circ F^{-1} \colon \smash{\IT^{3g-3}} \to \mathbf{R}$ is $L$-Lipschitz.
\end{theorem}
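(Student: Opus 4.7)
The plan is to reduce to Mirzakhani's convexity theorem (Theorem~\ref{theo:current_convex}) by combining it with the piecewise linearity of the transition map between train track and Dehn-Thurston coordinates, and then to invoke Proposition~\ref{prop:convex_lip}. Since $i(\alpha,\cdot)$ is positively homogeneous of degree $1$ on $\mf$ and $F$ is $\mathbf{R}^+$-equivariant, the composition $g_\alpha := i(\alpha,\cdot) \circ F^{-1}$ is positively homogeneous of degree $1$ on $\IT^{3g-3}$. A short algebraic manipulation (writing $x = r_x u_x$, $y = r_y u_y$ with $u_x, u_y$ on the unit sphere, and using $x - y = r_y(u_x - u_y) + (r_x - r_y) u_x$ together with the reverse triangle inequality) shows that if $g_\alpha$ is $L$-Lipschitz on the unit $L^2$-sphere $\mathcal{S} \subseteq \IT^{3g-3}$ and bounded by $M$ there, then it is $(2L+M)$-Lipschitz globally on $\IT^{3g-3}$. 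Thus it suffices to produce such constants $L$ and $M$ uniformly over $\alpha \in K$.

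First I would cover the compact sphere $\mathcal{S}$ by finitely many open convex cones $C_1, \ldots, C_N \subseteq \IT^{3g-3}$ such that, for each $j$, there exists a maximal train track $\tau_j$ on $S_g$ with $F^{-1}(C_j) \subseteq U(\tau_j)$ and such that the transition map $\Phi_{\tau_j} \circ F^{-1}|_{C_j}$ is linear. The existence of this cover follows from the piecewise integral linearity of the change of coordinates between train track and Dehn-Thurston systems, combined with the fact that $\mf$ is covered by the open cones $U(\tau)$ over maximal train tracks $\tau$; see \cite{PH92}. On each $C_j$, the restriction of $g_\alpha$ is then the composition of the linear map $\Phi_{\tau_j} \circ F^{-1}|_{C_j}$ with the convex function $i(\alpha, \cdot) \circ \Phi_{\tau_j}^{-1}$ supplied by Theorem~\ref{theo:current_convex}, and so it is itself convex.

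Applying Proposition~\ref{prop:convex_lip} on a slight open enlargement of the intersection of $C_j$ with a bounded Euclidean neighborhood of $\mathcal{S}$ yields a Lipschitz constant $L_j$ depending linearly on $\sup |g_\alpha|$ over this enlargement. Since $K \subseteq \mathcal{C}_g$ is compact, the geometric intersection number pairing $i(\cdot,\cdot)$ is continuous on $\mathcal{C}_g \times \mathcal{C}_g$, and the enlargement is the image under $F^{-1}$ of a compact set, the supremum $\sup_{\alpha \in K} \sup_x |g_\alpha(x)|$ over this enlargement is finite, giving bounds on $L_j$ and on $M$ that are uniform in $\alpha \in K$. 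By the Lebesgue number lemma applied to the finite cover $\{C_j\}$ of $\mathcal{S}$, there exists $\delta > 0$ such that any pair of points $x, y \in \mathcal{S}$ with $\|x - y\| < \delta$ lies in a single $C_j$, yielding $|g_\alpha(x) - g_\alpha(y)| \leq (\max_j L_j)\|x - y\|$; for $\|x - y\| \geq \delta$ the trivial bound $2M \leq (2M/\delta)\|x - y\|$ suffices. Combining these bounds and applying the homogeneity reduction produces the desired uniform Lipschitz constant $L = L(F, K)$.

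The main obstacle is producing the finite polyhedral cover of $\mathcal{S}$ in the second step, which requires a careful application of the structure theory of train tracks carrying singular measured foliations and of the compatibility between the piecewise integral linear structures on $\mf$ induced by train track and Dehn-Thurston coordinates, both developed in \cite{PH92}. Once the cover is in place, convexity on each piece, uniform continuity of $i$, and the homogeneity reduction fit together mechanically to give the theorem.
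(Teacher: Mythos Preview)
Your proposal is correct and follows essentially the same route as the paper: cover the projectivized space of measured foliations by finitely many maximal train track charts, invoke Mirzakhani's convexity theorem together with the convex-implies-Lipschitz principle (Proposition~\ref{prop:convex_lip}, or its homogeneous variant Proposition~\ref{prop:convex_lip_2}) on each piece, use the piecewise linearity of the transition between Dehn-Thurston and train track coordinates to transfer the estimates, and patch via a Lebesgue number argument with uniformity in $\alpha$ coming from compactness of $K$. The only organizational difference is that the paper connects an arbitrary pair $\lambda_0,\lambda_1\in\mf$ by a near-minimizing path in the $d_F$-metric and subdivides that path across charts, whereas you first reduce by homogeneity to the unit sphere and handle distant pairs with the trivial sup bound; these are equivalent ways of globalizing the local Lipschitz estimates.
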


\begin{proof}
	Given a set of Dehn-Thurston coordinates $F \colon \mf \to \IT^{3g-3}$ of $\mf$ denote by $d_F$ the metric on $\mf$ induced through this identification and by $\ell_F(\gamma)$ the length of a piecewise smooth path $\gamma \colon [0,1] \to \mf$ with respect to this metric. Given a maximal train track $\tau$ on $S_g$ denote by $d_{\tau}$ the Euclidean metric on $U(\tau) \subseteq \mf$ induced through the train track chart $\Phi_\tau \colon U(\tau) \to V(\tau)$ and by $\ell_\tau(\gamma)$ the length of a piecewise smooth path $\gamma \colon [0,1] \to U(\tau)$ with respect to this metric. 
	
	Every singular measured foliation $\lambda \in \mf$ is carried by a maximal train track $\tau$ on $S_g$ in such a way that $\lambda$ belongs to the interior of $U(\tau) \subseteq \mf$. In particular, as $\mf$ is projectively compact, there exist a finite collection $\{\tau_i\}_{i=1}^n$ of maximal train tracks on $S_g$ and a finite collection $\{W_i\}_{i=1}^n$ of open subsets of $\mf$ such that $W_i$ is compactly contained in the interior of $U(\tau_i) \subseteq \mf$ for every $i \in \{1,\dots,n\}$ and such that $\mf \subseteq \bigcup_{i=1}^n W_i$. 
	
	Fix  a set of Dehn-Thurston coordinates $F \colon \mf \to \IT^{3g-3}$ of $\mf$, a compact subset $K \subseteq \mathcal{C}_g$, and a geodesic current $\alpha \in K$. Let $\lambda_0,\lambda_1 \in \mf$ be arbitrary. Our goal is to show that
	\[
	|i(\alpha,\lambda_0) - i(\alpha,\lambda_1)| \preceq_{F,K} d_F(\lambda_0,\lambda_1).
	\]
	
	Consider a piecewise smooth path $\gamma \colon [0,1] \to \mf$ such that $\gamma(0) = \lambda_1$, $\gamma(1) = \lambda_1$, and $\ell_F(\gamma) \leq 2 d_F(\lambda_0,\lambda_1)$. By the Lebesgue number lemma, there exists a finite partition $0 =: t_0 < t_1 < \cdots < t_{m-1} < t_m := 1$ such that for every $j \in \{0,\dots,m-1\}$ there exists $i(j) \in \{1,\dots,n\}$ satisfying $\gamma([t_{j},t_{j+1}]) \subseteq W_{i(j)}$. As the change of coordinates maps between Dehn-Thurston coordinates and train track coordinates are piecewise linear, the following estimate holds for every $j \in \{0,\dots,m-1\}$,
	\begin{equation}
	\label{eq:D1}
	\ell_{\tau_{i(j)}}\left(\gamma|_{[t_{j},t_{j+1}]}\right) \asymp_{F} \ell_{F}\left(\gamma|_{[t_{j},t_{j+1}]}\right).
	\end{equation}
	As a consequence of Theorem \ref{theo:current_convex} and Proposition \ref{prop:convex_lip_2} we deduce that for every $j \in \{0,\dots,m-1\}$,
	\begin{equation}
	\label{eq:D2}
	|i(\alpha,\gamma(t_j)) - i(\alpha,\gamma(t_{j+1}))| \preceq_{K} d_{\tau_{i(j)}}(\gamma(t_j), \gamma(t_{j+1})) \leq \ell_{\tau_{i(j)}}\left(\gamma|_{[t_{j},t_{j+1}]}\right).
	\end{equation}
	Using the triangle inequality, (\ref{eq:D2}), (\ref{eq:D1}), and the assumption that $\ell_F(\gamma) \leq 2 d_F(\lambda_0,\lambda_1)$, we conclude
	\[
	|i(\alpha,\lambda_0) - i(\alpha,\lambda_1)| \leq \sum_{j=0}^{m-1} |i(\alpha,\gamma(t_j)) - i(\alpha,\gamma(t_{j+1}))| \preceq_{F,K} d_F(\lambda_0,\lambda_1). \qedhere
	\]
\end{proof}

The arguments introduced in the proof of Theorem \ref{theo:curr_lip} can also be used to study extremal lengths by applying Theorem \ref{theo:ext_convex} in place of Theorem \ref{theo:current_convex}. More concretely, one can show that extremal lengths are Lipschitz in Dehn-Thurston coordinates in the following sense.

\begin{theorem}
	\label{theo:ext_lip}
	Let $F \colon \mf \to \smash{\IT^{3g-3}}$ be a set of Dehn-Thurston coordinates of $\mf$ and $\mathcal{K} \subseteq \mathcal{T}_g$ be a compact subset of marked complex structures on $S_g$. Then, there exists a constant $L = L(F,\mathcal{K}) > 0$ such that for every $X \in \mathcal{K}$ the composition $\SExt_X \circ F^{-1} \colon \smash{\IT^{3g-3}} \to \mathbf{R}$ is $L$-Lipschitz.
\end{theorem}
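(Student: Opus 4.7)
The plan is to mimic the proof of Theorem \ref{theo:curr_lip} line by line, substituting Theorem \ref{theo:ext_convex} for Theorem \ref{theo:current_convex} at the convexity step and adding one observation to get uniformity in $X \in \mathcal{K}$. First I would reuse the geometric setup of the previous proof: since every $\lambda \in \mf$ lies in the interior of $U(\tau)$ for some maximal train track $\tau$ on $S_g$ and since $\pmf$ is compact, there exist finitely many maximal train tracks $\{\tau_i\}_{i=1}^n$ and open subsets $\{W_i\}_{i=1}^n$ of $\mf$, each compactly contained in the interior of $U(\tau_i)$, such that $\mf \subseteq \bigcup_{i=1}^n W_i$. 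Fix a set of Dehn-Thurston coordinates $F$, a compact subset $\mathcal{K} \subseteq \mathcal{T}_g$, and $X \in \mathcal{K}$; given $\lambda_0,\lambda_1 \in \mf$, pick a piecewise smooth path $\gamma \colon [0,1] \to \mf$ with $\gamma(0) = \lambda_0$, $\gamma(1) = \lambda_1$, and $\ell_F(\gamma) \leq 2 d_F(\lambda_0,\lambda_1)$.

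Next, the Lebesgue number lemma produces a partition $0 = t_0 < \cdots < t_m = 1$ and indices $i(j) \in \{1,\dots,n\}$ with $\gamma([t_j,t_{j+1}]) \subseteq W_{i(j)}$. Since the change of coordinates maps between Dehn-Thurston and train track coordinates are piecewise linear,
\[
\ell_{\tau_{i(j)}}\bigl(\gamma|_{[t_j,t_{j+1}]}\bigr) \asymp_F \ell_F\bigl(\gamma|_{[t_j,t_{j+1}]}\bigr).
\]
The key step is to invoke Theorem \ref{theo:ext_convex}, which says that $\SExt_X \circ \Phi_{\tau_{i(j)}}^{-1} \colon V(\tau_{i(j)}) \to \mathbf{R}$ is convex and homogeneous, and then to apply Proposition \ref{prop:convex_lip_2} to the projectively compact sub-cone of $V(\tau_{i(j)})$ generated by $\Phi_{\tau_{i(j)}}(W_{i(j)})$ to obtain
\[
|\SExt_X(\gamma(t_j)) - \SExt_X(\gamma(t_{j+1}))| \preceq_{F,\mathcal{K}} d_{\tau_{i(j)}}(\gamma(t_j),\gamma(t_{j+1})) \leq \ell_{\tau_{i(j)}}\bigl(\gamma|_{[t_j,t_{j+1}]}\bigr).
\]
Summing over $j$ via the triangle inequality and combining with the comparison of path lengths yields $|\SExt_X(\lambda_0) - \SExt_X(\lambda_1)| \preceq_{F,\mathcal{K}} d_F(\lambda_0,\lambda_1)$, as required.

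The main subtlety, and the only genuinely new input compared to Theorem \ref{theo:curr_lip}, is verifying that the Lipschitz constant furnished by Proposition \ref{prop:convex_lip_2} can be chosen uniformly in $X \in \mathcal{K}$. Inspecting the formula $L' = 2 \cdot (r')^{-1} \cdot \sup_{x \in \mathrm{Nbhd}_{r'}(D')} |f(x)|$, the only $X$-dependent quantity is the supremum of $\SExt_X$ over a fixed Euclidean neighborhood of a compact slice of the sub-cone. By Kerckhoff's continuous extension of extremal length, the map $(X,\lambda) \mapsto \SExt_X(\lambda)$ is jointly continuous on $\mathcal{T}_g \times \mf$, so its supremum over the compact product of $\mathcal{K}$ with that slice is finite and depends only on $F$ and $\mathcal{K}$. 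This gives the desired uniformity and completes the proof.
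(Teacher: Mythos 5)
Your proposal is correct and matches exactly what the paper intends: the paper does not write out a proof of Theorem \ref{theo:ext_lip} but instead remarks that the argument of Theorem \ref{theo:curr_lip} carries over verbatim with Theorem \ref{theo:ext_convex} replacing Theorem \ref{theo:current_convex}, which is precisely what you do. Your additional observation about uniformity in $X \in \mathcal{K}$ via joint continuity of $(X,\lambda) \mapsto \SExt_X(\lambda)$ is the correct justification for the $\preceq_{F,\mathcal{K}}$ constant (and is the analogue of what is implicitly used, via continuity of $i(\cdot,\cdot)$, in the proof of Theorem \ref{theo:curr_lip}), so it is a welcome piece of extra detail rather than a deviation.
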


\subsection*{Lipschitz functions on the space of projective singular measured foliations.} We finish this section by discussing an important consequence of Theorem \ref{theo:ext_lip}. When proving Theorem \ref{theo:main} we will directly apply this consequence rather than refer back to the original result.

Recall that $\pmf$ denotes the space of projective singular measured foliations on $S_g$. Recall that the projectivization $P\Sigma^{3g-3}$ of $\smash{\Sigma^{3g-3}}$ can be identified with subset $\smash{\Sigma_u^{3g-3}} \subseteq \smash{\Sigma^{3g-3}}$ of points in $\smash{\Sigma^{3g-3}}$ of unit $L^1$-norm through a homeomorphism we denote by $u \colon P\Sigma^{3g-3} \to \smash{\Sigma_u^{3g-3}}$. Recall that we endow the subset $\smash{\Sigma_u^{3g-3}} \subseteq \Sigma^{3g-3}$ with the induced Euclidean metric.  Recall that given a set of Dehn-Thurston coodinates $F \colon \mf \to \smash{\Sigma^{3g-3}}$ we denote by $PF \colon \pmf \to P\Sigma^{3g-3}$ its projectivization.

Let $X \in \mathcal{T}_g$ be a marked complex structure on $S_g$. Recall that the function $\SExt_X \colon \mf \to \mathbf{R}$ is homogeneous with respect to the natural action of $\mathbf{R}^+$ on $\mf$ which scales transverse measures. Consider the map $E_X \colon \pmf \to \mf$ which to every projective class $[\lambda] \in \pmf$ assigns the unique singular measured foliation $E_X([\lambda]) := \lambda/\SExt_X(\lambda) \in \mf$ in the projective class $[\lambda]$ of unit extremal length with respect to $X$. The following result, which is a direct consequence of Theorem \ref{theo:ext_lip}, shows that the map $E_X \colon \pmf \to \mf$ is Lipschitz in Dehn-Thurston coordinates.

\begin{theorem}
	\label{theo:ext_lip_2}
	Let $F \colon \mf \to \smash{\IT^{3g-3}}$ be a set of Dehn-Thurston coordinates of $\mf$ and $\mathcal{K} \subseteq \mathcal{T}_g$ be a compact subset of marked complex structures on $S_g$. Then, there exists a constant $L = L(F,\mathcal{K}) > 0$ such that for every $X \in \mathcal{K}$ the composition $F \circ E_X  \circ PF^{-1}\circ u^{-1} \colon \smash{\Sigma^{3g-3}_u} \to \smash{\Sigma^{3g-3}}$ is $L$-Lipschitz.
\end{theorem}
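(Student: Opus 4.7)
The plan is to give the composition $F \circ E_X \circ PF^{-1}\circ u^{-1}$ an explicit formula on $\smash{\Sigma^{3g-3}_u}$, and then reduce the Lipschitz estimate to Theorem \ref{theo:ext_lip} together with uniform pointwise bounds.

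\textbf{Step 1: Explicit formula.} For $X \in \mathcal{K}$, set $\phi_X := \SExt_X \circ F^{-1} \colon \smash{\Sigma^{3g-3}} \to \mathbf{R}$. Since $F$ is $\mathbf{R}^+$-equivariant and $\SExt_X$ is homogeneous of degree $1$, so is $\phi_X$. For $p \in \smash{\Sigma^{3g-3}_u}$, unwrapping the definitions, $u^{-1}(p)$ is the projective class of $p$ in $P\Sigma^{3g-3}$, $PF^{-1}\circ u^{-1}(p) = [F^{-1}(p)] \in \pmf$, and $E_X([F^{-1}(p)]) = F^{-1}(p)/\phi_X(p) \in \mf$ is the representative of unit extremal length. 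Applying $F$ and using $\mathbf{R}^+$-equivariance gives the simple formula
\[
F \circ E_X \circ PF^{-1}\circ u^{-1}(p) \;=\; \frac{p}{\phi_X(p)}.
\]

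\textbf{Step 2: Uniform bounds on $\phi_X$.} I would show there exist constants $0 < c \leq C$, depending only on $F$ and $\mathcal{K}$, such that $c \leq \phi_X(p) \leq C$ for all $X \in \mathcal{K}$ and all $p \in \smash{\Sigma^{3g-3}_u}$. Joint continuity of $\SExt_X(\lambda)$ in $(X,\lambda) \in \mathcal{T}_g \times \mf$, positivity $\SExt_X(\lambda) > 0$ for $\lambda \neq 0$, together with compactness of $\mathcal{K}$ and of $\smash{\Sigma^{3g-3}_u}$, and the fact that $F^{-1}(p) \neq 0$ for $p \in \smash{\Sigma^{3g-3}_u}$, give these bounds directly. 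Note also that $\smash{\Sigma^{3g-3}_u}$ has uniformly bounded Euclidean norm, say $\|p\| \leq C_0$, since all norms on the finite-dimensional space $\smash{\Sigma^{3g-3}}$ are equivalent.

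\textbf{Step 3: Lipschitz estimate.} By Theorem \ref{theo:ext_lip}, there is a constant $L_1 = L_1(F,\mathcal{K}) > 0$ such that $\phi_X$ is $L_1$-Lipschitz on $\smash{\Sigma^{3g-3}}$ for every $X \in \mathcal{K}$. For $p,q \in \smash{\Sigma^{3g-3}_u}$, the standard telescoping
\[
\left\| \frac{p}{\phi_X(p)} - \frac{q}{\phi_X(q)} \right\| \;\leq\; \frac{\|p\|\cdot |\phi_X(q) - \phi_X(p)|}{\phi_X(p)\,\phi_X(q)} \;+\; \frac{\|p-q\|}{\phi_X(q)}
\]
combined with Step 2 and the Lipschitz bound on $\phi_X$ yields
\[
\left\| \frac{p}{\phi_X(p)} - \frac{q}{\phi_X(q)} \right\| \;\leq\; \left( \frac{C_0 L_1}{c^2} + \frac{1}{c} \right) \|p-q\|,
\]
so the desired $L = L(F,\mathcal{K})$ exists.

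The main issue to be careful about is the uniformity across $X \in \mathcal{K}$ in Step 2; once that is established by the compactness argument indicated, everything else is routine manipulation. No new geometric input beyond Theorem \ref{theo:ext_lip} and the continuity/positivity of the extremal length is required.
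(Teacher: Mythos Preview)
Your proposal is correct and follows essentially the same approach as the paper: both establish uniform two-sided bounds on $\SExt_X$ over $\smash{\Sigma^{3g-3}_u}$ by compactness, invoke Theorem \ref{theo:ext_lip} for the Lipschitz property of $\SExt_X$, and then use the same triangle-inequality/telescoping estimate on the quotient $p/\phi_X(p)$ to conclude. The only cosmetic difference is that the paper phrases everything in $\mf$ via the pulled-back metric $d_F$ rather than directly in $\smash{\Sigma^{3g-3}}$; just be mindful that $\smash{\Sigma^{3g-3}}$ is a quotient rather than a linear space, so your $\|p-q\|$ should be read as the quotient metric and $\|p\|$ as the distance to the origin, after which your Step 3 inequalities go through verbatim.
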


\begin{proof}
	Denote by $d_F$ be the metric on $\mf$ induced through the identification $F \colon \mf \to \IT^{3g-3}$.  This metric is homogeneous with respect to the natural scaling action of $\mathbf{R}^+$ on $\mf$. Denote by $\|\cdot \|_F$ the pullback of the Euclidean norm on $\IT^{3g-3}$ through the identification $F \colon \mf \to \IT^{3g-3}$. Notice $\| \lambda \|_F \preceq 1$ for every $\lambda \in F^{-1}(\Sigma^{3g-3}_u) \subseteq \mf$.
	The compactness of $\mathcal{K} \subseteq \mathcal{T}_g$ and $F^{-1}(\Sigma^{3g-3}_u) \subseteq \mf$ ensures the following estimate holds for every $X \in \mathcal{K}$ and every $\lambda \in F^{-1}(\Sigma^{3g-3}_u)$,
	\begin{equation}
	\label{eq:C0}
	\SExt_X(\lambda) \asymp_{F,\mathcal{K}} 1.
	\end{equation}
	
	Let $X \in \mathcal{K}$ and $\lambda_0,\lambda_1 \in F^{-1}(\Sigma^{3g-3}_u)$ be arbitrary. Our goal is to show that
	\[
	d_F\left(\frac{\lambda_0}{\SExtn_X(\lambda_0)}, \frac{\lambda_1}{\SExtn_X(\lambda_1)}\right) \preceq_{F,K} d_F(\lambda_0,\lambda_1).
	\]
	By the triangle inequality,
	\begin{gather}
	\label{eq:C1}
	d_F\left(\frac{\lambda_0}{\SExtn_X(\lambda_0)}, \frac{\lambda_1}{\SExtn_X(\lambda_1)}\right) \\
	\leq  d_F\left(\frac{\lambda_0}{\SExtn_X(\lambda_0)}, \frac{\lambda_1}{\SExtn_X(\lambda_0)}\right) +  d_F\left(\frac{\lambda_1}{\SExtn_X(\lambda_0)}, \frac{\lambda_1}{\SExtn_X(\lambda_1)}\right). \nonumber
	\end{gather}
	Using the homogeneity of the metric $d_F$ and (\ref{eq:C0}) we deduce
	\begin{equation}
	\label{eq:C2}
	d_F\left(\frac{\lambda_0}{\SExtn_X(\lambda_0)}, \frac{\lambda_1}{\SExtn_X(\lambda_0)}\right) = \frac{1}{\SExtn_X(\lambda_0)} \cdot d_F\left(\lambda_0, \lambda_1\right) \preceq_{F,\mathcal{K}} d_F\left(\lambda_0, \lambda_1\right).
	\end{equation}
	Directly from the definition of $d_F$ it follows that
	\begin{equation}
	\label{eq:C3}
	d_F\left(\frac{\lambda_1}{\SExtn_X(\lambda_0)}, \frac{\lambda_1}{\SExtn_X(\lambda_1)}\right) = \bigg|\frac{1}{\SExtn_X(\lambda_0)} - \frac{1}{\SExtn_X(\lambda_1)} \bigg| \cdot \|\lambda_1\|_F.
	\end{equation}
	Using Theorem \ref{theo:ext_lip} and (\ref{eq:C0}) we deduce
	\begin{equation}
	\label{eq:C4}
	\bigg|\frac{1}{\SExtn_X(\lambda_0)} - \frac{1}{\SExtn_X(\lambda_1)} \bigg| = \frac{| \SExtn_X(\lambda_0) - \SExtn_X(\lambda_1)|}{\SExtn_X(\lambda_0) \cdot \SExtn_X(\lambda_1)} \preceq_{F,\mathcal{K}} d_F(\lambda_0,\lambda_1).
	\end{equation}
	Putting together (\ref{eq:C1}), (\ref{eq:C2}), (\ref{eq:C3}), (\ref{eq:C4}), and using the bound $\|\lambda_1\|_F \preceq 1$, we conclude
	\[
	d_F\left(\frac{\lambda_0}{\SExtn_X(\lambda_0)}, \frac{\lambda_1}{\SExtn_X(\lambda_1)}\right) \preceq_{F,\mathcal{K}} d_F(\lambda_0,\lambda_1). \qedhere
	\]
\end{proof}

\section{Counting filling closed curves on surfaces}

\subsection*{Outline of this section.} In this section we give a complete proof of Theorem \ref{theo:main}, the main result of this paper, using the tracking method sketched in \S 1. We begin with a detailed outline of the proof of Theorem \ref{theo:main}. In accordance with the outlined strategy, we proceed to study certain localized versions of the counting function of interest. See Theorem \ref{theo:count_loc} for the main estimate concerning these local counting functions. A more precise version of Theorem \ref{theo:main} is then deduced as a direct consequence of this estimate. See Theorem \ref{theo:main_strong} for a precise statement. We end this section by stating a stronger version of Theorem \ref{theo:main} that applies to counting functions of non-connected filling closed multi-curves and which can also be proved using the tracking method.

\subsection*{Outline of the proof of Theorem \ref{theo:main}.} For the rest of this section we fix an integer $g \geq 2$ and a connected, oriented, closed surface $S_g$ of genus $g$. Unless otherwise stated we will not distinguish between closed curves on $S_g$ and their free homotopy classes. Recall that $\mcg$ denotes the mapping class group of $S_g$. Recall that $\mathcal{C}_g$ denotes the space of geodesic currents on $S_g$. Recall that $i(\cdot,\cdot)$ denotes the geometric intersection number pairing on $\mathcal{C}_g$. Recall that $\mathcal{C}_g^* \subseteq \mathcal{C}_g$ denotes the open subset of filling geodesic currents on $S_g$. Let $\alpha \in \mathcal{C}_g^*$ be a filling geodesic current on $S_g$ and $\beta$ be a filling closed curve on $S_g$. Recall that for every $L > 0$ we consider the counting function
\begin{equation}
\label{eq:c}
c(\alpha,\beta,L) := \#\{ \gamma \in \mcg \cdot \beta  \ | \  i(\alpha,\gamma) \leq L \}.
\end{equation}
Our goal is to prove a quantitative estimate with a power saving error term for this counting function. It will be convenient to consider the counting function defined for every $L > 0$ as
\begin{equation}
\label{eq:f}
f(\alpha,\beta,L) := \# \{\mc \in \mcg \ | \ i(\alpha,\mc.\beta) \leq L\}.
\end{equation}
As the closed curve $\beta$ on $S_g$ is filling, $\mathrm{Stab}(\beta) \subseteq \mcg$ is finite. In particular, for every $L > 0$,
\begin{equation}
\label{eq:count_cf}
f(\alpha,\beta,L) = |\mathrm{Stab}(\beta)| \cdot c(\alpha,\beta,L).
\end{equation}

It is equivalent then for our purposes to prove a quantitative estimate with a power saving error term for the counting function $f(\alpha,\beta,L)$. We prove this estimate following the tracking method sketched in \S 1. Roughly speaking, using the tracking principle in Theorem \ref{theo:track}, we approximate the counting function $f(\alpha,\beta,L)$ by finer and finer collections of bisector counting functions for mapping class group orbits of Teichmüller space, which we then estimate using  Theorem \ref{theo:bisect_count}. To give a more detailed account of this strategy, let us review some of the notation introduced in previous sections.

Recall that $\tt$ denotes the Teichmüller space of marked complex structures on $S_g$ and that $d_\mathcal{T}$ denotes the Teichmüller metric on $\mathcal{T}_g$. Recall that $\qut$ denotes the Teichmüller space of marked unit area quadratic differentials on $S_g$ and that $\pi \colon \qut \to \tt$ denotes the natural projection to $\mathcal{T}_g$. Recall that $\mf$ denotes the space of singular measured foliations on $S_g$ and that $\Re(q), \Im(q) \in \mf$ denote the vertical and horizontal foliations of $q \in \qut$. Recall that $\pmf $ denotes the space of projective singular measured foliations on $S_g$ and that $[\lambda] \in \pmf$ denotes the projective class of $\lambda \in \mf$.

Recall that $\Delta \subseteq \mathcal{T}_g \times \mathcal{T}_g$ denotes the diagonal of $\mathcal{T}_g \times \mathcal{T}_g$, that $S(X) := \pi^{-1}(X)$ for every $X \in \mathcal{T}_g$, and that $q_s, q_e \colon \mathcal{T}_g \times \mathcal{T}_g - \Delta \to \mathcal{Q}^1\mathcal{T}_g$ denote the maps which to every pair of distinct points $X \neq Y \in \mathcal{T}_g$ assign the marked quadratic differentials $q_s(X,Y) \in S(X)$ and $q_e(X,Y) \in S(Y)$ corresponding to the cotangent directions at $X$ and $Y$ of the unique Teichmüller geodesic segment from $X$ to $Y$.

Fix auxiliary marked hyperbolic structures $X,Y \in \mathcal{T}_g$. The tracking principle in Theorem \ref{theo:track} provides a constant $\kappa = \kappa(g) > 0$ depending only on $g$ such that for all but quantitively few mapping classes $\mc \in \mcg$, if $r := d_\mathcal{T}(X,\mc.Y)$, $q_s := q_s(X,\mc.Y)$, and $q_e := q_e(X,\mc.Y)$, then
\begin{equation}
\label{eq:track}
i(\alpha,\mc.\beta) = i(\alpha,\Re(q_s)) \cdot i(\mc.\beta,\Im(q_e)) \cdot e^r + O_{X,Y,K,\beta}\left(e^{(1-\kappa)r}\right).
\end{equation}
This estimate shows that the threshold $i(\alpha,\mc.\beta)$ of the counting function $f(\alpha,\beta,L)$ can be approximated by $e^r$ times a varying coefficient $i(\alpha,\Re(q_s)) \cdot i(\mc.\beta,\Im(q_e))$ depending on $\mc \in \mcg$. This coefficient can be controlled using bisector information. More concretely, if we restrict $[\Re(q_s)]$ and $[\Im(q_e)]$ to belong to small subsets of $\pmf$, the varying coefficient will be almost constant. 

After discarding quantitatively few mapping classes to ensure (\ref{eq:track}) holds, we refine the counting function $f(\alpha,\beta,L)$ into local counting functions with prescribed bisector information using a partition of $\pmf$ into simplices in Dehn-Thurston coordinates. Each one of these local counting functions can be estimated using Theorem \ref{theo:bisect_count}. How well the sum of the leading terms of these estimates approximates the original counting function $f(\alpha,\beta,L)$ depends on how small the variation of the leading coefficient in (\ref{eq:track}) is when restricting to each of the local counting functions. The magnitude of these variations can be controlled using Theorems \ref{theo:curr_lip} and \ref{theo:ext_lip_2}. The more we refine the partition of $\pmf$ the smaller these variations become. Each local counting function also contributes an error term as in Theorem \ref{theo:bisect_count}. To ensure the sum of these error terms remains controlled we refine the partition of $\pmf$ at an appropriate rate with respect to the parameter $L > 0$.

\subsection*{Local counting estimates.} As mentioned above, to prove Theorem \ref{theo:main} we study certain localizations of the counting function $f(\alpha,\beta,L)$. Recall that for every $X \in \mathcal{T}_g$ and every $U \subseteq \pmf$ we denote
\[
\mathrm{Sect}_U(X) := \{Y \in \mathcal{T}_g \setminus \{X\} \ | \ [\Re(q_s(X,Y))] \in U\}.
\]
Given a filling geodesic current $\alpha \in \mathcal{C}_g^*$, a filling closed curve $\beta$ on $S_g$, a pair of points $X,Y \in \mathcal{T}_g$, a pair of subsets $U,V \subseteq \pmf$, and $L > 0$, consider the local counting function
\[
f(\alpha,\beta,X,Y,U,V,L) := \# \left\lbrace
\begin{array}{l | l}
\mc \in \mcg \ & \ i(\alpha,\mc.\beta) \leq L, \\ 
& \ \mc.Y \in \mathrm{Sect}_U(X), \\
& \ \mc^{-1}.X \in \mathrm{Sect}_V(Y).
\end{array}
\right\rbrace.
\]
Our immediate goal is to prove a quantitative estimate with a power saving error term for this counting function. Theorems \ref{theo:bisect_count} and \ref{theo:track_mult} will be the main tools used in the proof of this estimate.

Let us introduce some notation that will help us simplify the statements of the results that follow. Recall that $\SExt_X(\lambda)$ denotes the square root of the extremal length of a singular measured foliation $\lambda \in \mf$ with respect to a marked complex structure $X \in \mathcal{T}_g$. Recall that for $X \in \mathcal{T}_g$ we denote by $E_X \colon \pmf \to \mf$ the map which to every projective class $[\lambda] \in \pmf$ assigns the unique singular measured foliation $E_X([\lambda]) := \lambda/\SExt_X(\lambda) \in \mf$ in the class $[\lambda]$ of unit extremal length with respect to $X$. Recall that $\pmf$ is compact. Given a filling geodesic current $\alpha \in \mathcal{C}_g^*$, a marked complex structure $X \in \mathcal{T}_g$, and a non-empty subset $U \subseteq \pmf$, consider the positive constants
\begin{gather*}
m(\alpha,X,U) := \inf_{[\lambda] \in U} i (\alpha, E_X([\lambda])),\\
M(\alpha,X,U) := \sup_{[\lambda] \in U} i (\alpha, E_X([\lambda])).
\end{gather*}

Let $X,Y \in \mathcal{T}_g$, $C > 0$, and $\kappa > 0$. Recall that, motivated by Theorem \ref{theo:teich_count}, we say that a subset of mapping classes $\M \subseteq \mcg$ is $(X,Y,C,\kappa)$-sparse if the following bound holds for every $R > 0$,
\[
\#\{\mc \in  \M \ | \ d_\mathcal{T}(X,\mc.Y) \leq R \} \leq C \cdot e^{(6g-6-\kappa)R}.
\] 

Recall that for every pair of points $X,Y \in \mathcal{T}_g$, every pair of subsets $U,V \subseteq \pmf$, and every $R > 0$, we consider the bisector counting function 
\[
N(X,Y,U,V,R) = \# \left\lbrace
\begin{array}{l | l}
\mc \in \mcg \ & \ d_\mathcal{T}(X,\mc.Y) \leq R, \\ 
& \ \mc.Y \in \mathrm{Sect}_U(X), \\
& \ \mc^{-1}.X \in \mathrm{Sect}_V(Y).
\end{array}
\right\rbrace.
\]

The following result gives an upper bound for the local counting function $f(\alpha,\beta,X,Y,U,V,L)$ in terms of the bisector counting function $N(X,Y,U,V,R)$. The tracking principle in Theorem \ref{theo:track_mult} is the main tool used in the proof of this result.

\begin{proposition} 
	\label{prop:up_2}
	There exists a constant $\kappa_3 = \kappa_3(g) > 0$ such that for every compact subset $\mathcal{K} \subseteq \mathcal{T}_g$, every compact subset $K \subseteq \mathcal{C}_g^*$, and every filling closed curve $\beta$ on $S_g$ there exist constants $A = A(\mathcal{K},K,\beta) > 0$ and  $L_1 = L_1(\mathcal{K},K,\beta) > 0$ such that for every $\alpha \in K$, every $X,Y \in \mathcal{K}$, every $U,V \subseteq \pmf$ non-empty, and every $L > L_1$, if 
	\[
	R:= \log(m(\alpha,X,U)^{-1} \cdot m(\beta,Y,V)^{-1} \cdot (1-A \cdot L^{-\kappa_3})^{-1} \cdot L),
	\]
	then the following bound holds,
	\[
	f(\alpha,\beta,X,Y,U,V,L) - N(X,Y,U,V,R) \preceq_{\mathcal{K},K,\beta} L^{6g-6-\kappa_3}.
	\]
\end{proposition}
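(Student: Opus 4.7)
The plan is to use the lower-bound half of the tracking principle (Theorem \ref{theo:track_mult}) to convert the intersection-number constraint $i(\alpha,\mc.\beta) \leq L$ into a Teichmüller distance constraint $d_\mathcal{T}(X,\mc.Y) \leq R$, after excising a quantitatively small set of mapping classes. First, fix $\kappa_2 = \kappa_2(g)$ from Theorem \ref{theo:track_mult} and choose $\kappa_3 = \kappa_3(g) > 0$ small enough that both $\kappa_3 \leq \kappa_2$ and $(6g-6)\kappa_3/\kappa_2 \leq 6g-6-\kappa_3$ hold (for instance $\kappa_3 := (6g-6)\kappa_2/((6g-6)+\kappa_2)$). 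Let $A := A(\mathcal{K},K,\beta)$ and $M = M(X,Y,\beta) \subseteq \mcg$ be the constant and $(X,Y,C,\kappa_2)$-sparse subset furnished by Theorem \ref{theo:track_mult}, and take $L_1$ large enough that $AL^{-\kappa_3} < 1/2$ for every $L > L_1$.

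The mapping classes contributing to $f(\alpha,\beta,X,Y,U,V,L)$ will be split according to $r := d_\mathcal{T}(X,\mc.Y)$ into three disjoint classes: (a) those with $r \leq r_0 := (\kappa_3/\kappa_2)\log L$; (b) those with $r > r_0$ and $\mc \in M$; (c) those with $r > r_0$ and $\mc \notin M$. For (a), Theorem \ref{theo:teich_count} gives a count $\preceq_{\mathcal{K}} e^{(6g-6)r_0} = L^{(6g-6)\kappa_3/\kappa_2} \leq L^{6g-6-\kappa_3}$. For (b), compactness of $\mathcal{K}$ and $K$ together with the fillingness of $\alpha$ and $\beta$ ensures $m(\alpha,X,U),\, m(\beta,Y,V) \succeq_{\mathcal{K},K,\beta} 1$, so $R \leq \log L + O_{\mathcal{K},K,\beta}(1)$; the $(X,Y,C,\kappa_2)$-sparsity of $M$ then bounds this contribution by $\preceq_{\mathcal{K},K,\beta} e^{(6g-6-\kappa_2)R} \preceq L^{6g-6-\kappa_2} \leq L^{6g-6-\kappa_3}$.

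For (c) I would apply the lower bound in Theorem \ref{theo:track_mult}. Two observations translate the bisector data into lower bounds on the coefficients appearing there. First, since $[\Re(q_s(X,\mc.Y))] \in U$ and $\Re(q_s)$ is the horizontal foliation of a unit-area quadratic differential over $X$, it has $\SExt_X(\Re(q_s)) = 1$, so $\Re(q_s) = E_X([\Re(q_s)])$ and $i(\alpha, \Re(q_s)) \geq m(\alpha,X,U)$. Second, $\mcg$-equivariance of $q_e$ together with the reversal identity $\Re(q_s(B,A)) = \Im(q_e(A,B))$ gives $\mc^{-1}.\Im(q_e(X,\mc.Y)) = \Re(q_s(Y,\mc^{-1}.X))$, and since $[\Re(q_s(Y,\mc^{-1}.X))] \in V$ (from $\mc^{-1}.X \in \mathrm{Sect}_V(Y)$) and $\Re(q_s(Y,\mc^{-1}.X))$ again has unit $\SExt_Y$, one gets $i(\mc.\beta,\Im(q_e)) = i(\beta, \Re(q_s(Y,\mc^{-1}.X))) \geq m(\beta,Y,V)$. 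The condition $r > r_0$ forces $e^{-\kappa_2 r} < L^{-\kappa_3}$, so $(1 - Ae^{-\kappa_2 r})^{-1} \leq (1 - AL^{-\kappa_3})^{-1}$, and the lower bound in Theorem \ref{theo:track_mult} rearranges to $r \leq R$. Thus every mapping class in case (c) is counted by $N(X,Y,U,V,R)$, and the three bounds together yield the proposition.

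The main obstacle is the circular dependence on $r$ in the multiplicative error $(1-Ae^{-\kappa_2 r})^{-1}$ supplied by Theorem \ref{theo:track_mult}: one needs a lower bound on $r$ before this factor can be replaced by the concrete $L$-dependent quantity appearing in the definition of $R$. The partition above handles this by absorbing small-$r$ mapping classes via the polynomial growth of $\mcg \cdot Y$ in $\mathcal{T}_g$, with $\kappa_3$ calibrated so that the small-$r$ count, the sparsity term from $M$, and the target bound all fit within a single $L^{6g-6-\kappa_3}$ error budget.
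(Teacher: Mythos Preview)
Your approach is essentially the same as the paper's: excise the sparse set $M$, excise mapping classes with small $r$, and show via the lower-bound half of Theorem~\ref{theo:track_mult} that every remaining mapping class satisfies $r \leq R$. The paper uses the threshold $r_0 = \tfrac{1}{2}\log L$ with $\kappa_3 = \min\{\kappa_2/2,\,3g-3\}$; your choice $r_0 = (\kappa_3/\kappa_2)\log L$ with $\kappa_3 = (6g-6)\kappa_2/((6g-6)+\kappa_2)$ works equally well, and your treatment of case~(c) (including the identities $\Re(q_s) = E_X([\Re(q_s)])$ and $\mc^{-1}.\Im(q_e(X,\mc.Y)) = \Re(q_s(Y,\mc^{-1}.X))$) matches the paper's exactly.

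There is one slip in case~(b). You bound $R \leq \log L + O_{\mathcal{K},K,\beta}(1)$ via the lower bounds on $m(\alpha,X,U)$ and $m(\beta,Y,V)$, and then apply sparsity of $M$ at radius $R$. But sparsity controls $\#\{\mc \in M : d_\mathcal{T}(X,\mc.Y) \leq R\}$, whereas the mapping classes in~(b) are only known to satisfy $i(\alpha,\mc.\beta) \leq L$; nothing yet forces $r \leq R$ for them, since the tracking bound does not apply on $M$. What is needed instead is the coarse comparison of Proposition~\ref{prop:coarse}: compactness of $\mathcal{K}$, $K$ and fillingness of $\alpha,\beta$ give $e^r \preceq_{\mathcal{K},K,\beta} i(\alpha,\mc.\beta) \leq L$, hence $r \leq \log L + O_{\mathcal{K},K,\beta}(1)$ for every $\mc$ contributing to $f$, and sparsity then applies at that radius. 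This is precisely what the paper does, and your stated ingredients (compactness, fillingness) already point to it, so the fix is a one-line substitution of Proposition~\ref{prop:coarse} for the bound on $R$.
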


\begin{proof}
	Let $\kappa_2 = \kappa_2(g) > 0$ be as in Theorem \ref{theo:track_mult} and $\kappa_3 = \kappa_3(g) := \min\{\kappa_2/2,3g-3\} > 0$. Fix $\mathcal{K} \subseteq \mathcal{T}_g$ compact, $K \subseteq \mathcal{C}_g^*$ compact, and $\beta$ a filling closed curve on $S_g$. Let $C = C(\mathcal{K},\beta) > 0$ and $A = A(\mathcal{K},K,\beta) > 0$ be as in Theorem \ref{theo:track_mult}. Fix $X,Y \in \mathcal{K}$. Let $M := M(X,Y,\beta) \subseteq \mcg$ be the $(X,Y,C,\kappa_2)$-sparse subset of mapping classes provided by Theorem \ref{theo:track_mult}. Fix $\alpha \in K$. Theorem \ref{theo:track_mult} ensures that for every $\mc \in \mcg \setminus M$, if $r := d_\mathcal{T}(X,\mc.Y)$, $q_s := q_s(X,\mc.Y)$, and $q_e := q_e(X,\mc.Y)$, then
	\begin{gather}
		\label{eq:t_0}
		i(\alpha,\Re(q_s)) \cdot i(\mc.\beta,\Im(q_e)) \cdot e^r \cdot (1 - A \cdot e^{-\kappa_2 r}) \leq i(\alpha,\mc.\beta).
	\end{gather}

	Notice that for every mapping class $\mc \in \mcg$ such that $\mc.Y \neq X$,
	\[
	q_e(X,\mc.Y) = \mc.q_e(\mc^{-1}.X,Y)= -\mc.q_s(Y,\mc^{-1}.X).
	\]
	As multiplication by $-1$ exchanges the vertical and horizontal foliations of quadratic differentials, this implies that for every mapping class $\mc \in \mcg$ such that $\mc.Y \neq X$,
	\[
	\Im(q_e(X,\mc.Y)) = \Im(-\mc.q_s(Y,\mc^{-1}.X)) = \mc.\Re(q_s(Y,\mc^{-1}.X)).
	\]
	In particular, for every mapping class $\mc \in \mcg$ such that $\mc.Y \neq X$,
	\begin{equation}
	\label{eq:t00}
	i(\mc.\beta,\Im(q_e(X,\mc.Y))) = i(\mc.\beta,\mc.\Re(q_s(Y,\mc^{-1}.X))) = i(\beta,\Re(q_s(Y,\mc^{-1}.X))).
	\end{equation}
	Recall that $\SExt_{\pi(q)}(\Re(q)) = 1$ for every $q \in \qut$. From this fact and (\ref{eq:t00}) we deduce the following identities hold for every $\mc \in \mcg$ such that $\mc.Y \neq X$,
	\begin{gather}
	i(\alpha,\Re(q_s(X,\mc.Y))) = i(\alpha,E_X([\Re(q_s(X,\mc.Y))])), \label{eq:ta}\\
	i(\mc.\beta,\Im(q_e(X,\mc.Y))) = i(\beta,E_Y([\Re(q_s(Y,\mc^{-1}.X))])). \label{eq:tb}
	\end{gather}
	
	Fix $U,V \subseteq \pmf$ non-empty and $L > 1$. Consider the truncated local counting function
	\[
	f^\dagger(\alpha,\beta,X,Y,U,V,L) := \# \left\lbrace
	\begin{array}{l | l}
	\mc \in \mcg \setminus M \ & \ i(\alpha,\mc.\beta) \leq L, \\ 
	& \ \mc.Y \in \mathrm{Sect}_U(X), \\
	& \ \mc^{-1}.X \in \mathrm{Sect}_V(Y).
	\end{array}
	\right\rbrace.
	\]
	By Proposition \ref{prop:coarse}, there exists a constant $B = B(\mathcal{K},K,\beta) > 0$ such that for every $\mc \in \mcg$,
	\begin{equation}
	\label{eq:t1}
	d_\mathcal{T}(X,\mc.Y) \leq \log( B \cdot i(\alpha,\mc.\beta)).
	\end{equation}
	As $M \subseteq \mcg$ is $(X,Y,C,\kappa_2)$-sparse, (\ref{eq:t1}) implies 
	\[
	\#\{\mc \in M \ | \ i(\alpha,\mc.\beta) \leq L\} \leq C \cdot B^{6g-6-\kappa_2} \cdot L^{6g-6-\kappa_2} \preceq_{\mathcal{K},K,\beta} L^{6g-6-\kappa_3}.
	\]
	In particular, the following estimate holds,
	\begin{equation}
	\label{eq:t2}
	f(\alpha,\beta,X,Y,U,V,L) = f^\dagger(\alpha,\beta,X,Y,U,V,L) + O_{\mathcal{K},K,\beta}\left( L^{6g-6-\kappa_3}\right).
	\end{equation}
	
	Consider now the further truncated local counting function
	\[
	f^{\ddagger}(\alpha,\beta,X,Y,U,V,L) := \# \left\lbrace
	\begin{array}{l | l}
	\mc \in \mcg \setminus M \ & \ i(\alpha,\mc.\beta) \leq L, \\ 
	& \ d_\mathcal{T}(X,\mc.Y) \geq \log(L^{1/2}), \\
	& \ \mc.Y \in \mathrm{Sect}_U(X), \\
	& \ \mc^{-1}.X \in \mathrm{Sect}_V(Y).
	\end{array}
	\right\rbrace.
	\]
	Theorem \ref{theo:teich_count} guarantees that
	\[
	\# \{\mc \in \mcg \ | \ d_\mathcal{T}(X,\mc.Y) \leq \log(L^{1/2}) \} \preceq_{\mathcal{K}} L^{3g-3} \preceq_{\mathcal{K}} L^{6g-6-\kappa_3}.
	\]
	In particular, the following estimate holds,
	\begin{equation}
	\label{eq:t3}
	f^{\dagger}(\alpha,\beta,X,Y,U,V,L) = f^{\ddagger}(\alpha,\beta,X,Y,U,V,L) + O_\mathcal{K}\left(L^{6g-6-\kappa_3}\right).
	\end{equation}
	
	Let $L_1 = L_1(\mathcal{K},K,L) := \max\{(2A)^{1/\kappa_3},1\} > 0$. For the rest of this proof we assume $L > L_1$. This guarantees $1- A \cdot L^{-\kappa_3} > 1/2$. Consider the parameter
	\[
	R:= \log(m(\alpha,X,U)^{-1} \cdot m(\beta,Y,V)^{-1} \cdot (1-A \cdot L^{-\kappa_3})^{-1} \cdot L).
	\]
	The bound in (\ref{eq:t_0}) together with the identities in (\ref{eq:ta}) and (\ref{eq:tb}) ensure that every mapping class $\mc \in \mcg \setminus M$ such that $i(\alpha,\mc.\beta) \leq L$, $d_\mathcal{T}(X,\mc.Y) \geq \log(L^{1/2})$, $\mc.Y \in \mathrm{Sect}_U(X)$, and $\mc^{-1}.X \in \mathrm{Sect}_V(Y)$ must also satisfy $d_\mathcal{T}(X,\mc.Y) \leq R$. In particular, the following bound holds,
	\begin{equation}
	\label{eq:t4}
	 f^{\ddagger}(\alpha,\beta,X,Y,U,V,L) \leq  N(X,Y,U,V,R).
	\end{equation}

	Putting together (\ref{eq:t2}), (\ref{eq:t3}), and (\ref{eq:t4}), we conclude
	\[
	f(\alpha,\beta,X,Y,U,V,L) - N(X,Y,U,V,R) \preceq_{\mathcal{K},K,\beta} L^{6g-6-\kappa_3}. \qedhere 
	\]
\end{proof}

The following result gives a lower bound for the local counting function $f(\alpha,\beta,X,Y,U,V,L)$ in terms of the bisector counting function $N(X,Y,U,V,R)$. As in the case of Proposition \ref{prop:up_2}, the tracking principle in Theorem \ref{theo:track_mult} is the main tool used in the proof of this result.

\begin{proposition} 
	\label{prop:low_2}
	There exists a constant $\kappa_3 = \kappa_3(g) > 0$ such that for every $\mathcal{K} \subseteq \mathcal{T}_g$ compact, every $K \subseteq \mathcal{C}_g^*$ compact, and every filling closed curve $\beta$ on $S_g$ there exists a  constant $A = A(\mathcal{K},K,\beta) > 0$ such that for every $\alpha \in K$, every $X,Y \in \mathcal{K}$, every $U,V \subseteq \pmf$ non-empty, and every $L > 1$, if 
	\[
	R':= \log(M(\alpha,X,U)^{-1} \cdot M(\beta,Y,V)^{-1} \cdot (1+A \cdot L^{-\kappa_3})^{-1} \cdot L),
	\]
	then the following bound holds,
	\[
	N(X,Y,U,V,R') - f(\alpha,\beta,X,Y,U,V,L) \preceq_{\mathcal{K},K,\beta} L^{6g-6-\kappa_3}.
	\]
\end{proposition}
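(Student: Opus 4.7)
The plan is to mirror the proof of Proposition \ref{prop:up_2}, replacing the lower inequality in Theorem \ref{theo:track_mult} by the upper one. I will take $\kappa_3 = \kappa_3(g) := \min\{\kappa_2/2, 3g-3\} > 0$ and reuse the same $(X,Y,C,\kappa_2)$-sparse subset $M = M(X,Y,\beta) \subseteq \mcg$ together with the constants $C = C(\mathcal{K},\beta)$ and $A = A(\mathcal{K},K,\beta)$ furnished by Theorem \ref{theo:track_mult}. First I would account for the mapping classes that must be discarded from $N(X,Y,U,V,R')$: those lying in $M$ with $d_\mathcal{T}(X,\mc.Y) \leq R'$ contribute at most $C \cdot e^{(6g-6-\kappa_2) R'}$, while those satisfying $d_\mathcal{T}(X,\mc.Y) < \log(L^{1/2})$ contribute $O_\mathcal{K}(L^{3g-3}) = O_\mathcal{K}(L^{6g-6-\kappa_3})$ by Theorem \ref{theo:teich_count}.

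To make the first of these contributions uniform in $U,V$, I need a lower bound on $M(\alpha,X,U)$ and $M(\beta,Y,V)$. This is where the fillingness hypotheses on $\alpha$ and $\beta$ enter: the function $[\lambda] \mapsto i(\alpha,E_X([\lambda]))$ is strictly positive and continuous on the compact space $\pmf$ and depends continuously on $(\alpha, X) \in K \times \mathcal{K}$, so $M(\alpha,X,U) \geq m(\alpha,X,\pmf) \succeq_{\mathcal{K},K} 1$; an analogous argument gives $M(\beta,Y,V) \succeq_{\mathcal{K},\beta} 1$. Consequently $e^{R'} \preceq_{\mathcal{K},K,\beta} L$, and the sparse contribution is $\preceq_{\mathcal{K},K,\beta} L^{6g-6-\kappa_2} \leq L^{6g-6-\kappa_3}$.

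It then suffices to show that every $\mc \in \mcg \setminus M$ counted by $N(X,Y,U,V,R')$ and satisfying $r := d_\mathcal{T}(X,\mc.Y) \geq \log(L^{1/2})$ verifies $i(\alpha,\mc.\beta) \leq L$. Setting $q_s := q_s(X,\mc.Y)$ and $q_e := q_e(X,\mc.Y)$, the upper inequality in Theorem \ref{theo:track_mult} reads
\[
i(\alpha,\mc.\beta) \leq i(\alpha,\Re(q_s)) \cdot i(\mc.\beta,\Im(q_e)) \cdot e^r \cdot (1 + A e^{-\kappa_2 r}).
\]
Since $r \geq \log(L^{1/2})$ and $\kappa_3 \leq \kappa_2/2$, one has $A e^{-\kappa_2 r} \leq A L^{-\kappa_3}$. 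The identities (\ref{eq:ta}) and (\ref{eq:tb}) derived in the proof of Proposition \ref{prop:up_2}, together with the sector constraints $[\Re(q_s)] \in U$ and $[\Re(q_s(Y,\mc^{-1}.X))] \in V$, bound the two intersection factors by $M(\alpha,X,U)$ and $M(\beta,Y,V)$ respectively. Combining these with $e^r \leq e^{R'}$ and the definition of $R'$ yields $i(\alpha,\mc.\beta) \leq L$, so $\mc$ is counted by $f(\alpha,\beta,X,Y,U,V,L)$. The main technical point, namely ensuring that the error term is uniform in $U,V \subseteq \pmf$ non-empty, is handled through the compactness and fillingness arguments in the preceding paragraph; the rest is a formal parallel of the proof of Proposition \ref{prop:up_2}.
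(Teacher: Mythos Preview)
Your proposal is correct and follows essentially the same approach as the paper's own proof: the same choice $\kappa_3 = \min\{\kappa_2/2,3g-3\}$, the same two truncations of $N(X,Y,U,V,R')$ (discarding the sparse set $M$ and the mapping classes with $d_\mathcal{T}(X,\mc.Y) < \log(L^{1/2})$), the same fillingness/compactness argument to get $M(\alpha,X,U), M(\beta,Y,V) \succeq 1$ and hence $e^{R'} \preceq_{\mathcal{K},K,\beta} L$, and the same use of the upper inequality in Theorem~\ref{theo:track_mult} together with (\ref{eq:ta})--(\ref{eq:tb}) to conclude $i(\alpha,\mc.\beta) \leq L$ for the surviving mapping classes.
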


\begin{proof}
	Let $\kappa_2 = \kappa_2(g) > 0$ be as in Theorem \ref{theo:track_mult} and $\kappa_3 = \kappa_3(g) := \min\{\kappa_2/2,3g-3\} > 0$. Fix $\mathcal{K} \subseteq \mathcal{T}_g$ compact, $K \subseteq \mathcal{C}_g^*$ compact, and $\beta$ a filling closed curve on $S_g$. Let $C = C(\mathcal{K},\beta) > 0$ and $A = A(\mathcal{K},K,\beta) > 0$ be as in Theorem \ref{theo:track_mult}. Fix $X,Y \in \mathcal{K}$. Let $M := M(X,Y,\beta) \subseteq \mcg$ be the $(X,Y,C,\kappa_2)$-sparse subset of mapping classes provided by Theorem \ref{theo:track_mult}. Fix $\alpha \in K$. Theorem \ref{theo:track_mult} ensures that for every $\mc \in \mcg \setminus M$, if $r := d_\mathcal{T}(X,\mc.Y)$, $q_s := q_s(X,\mc.Y)$, and $q_e := q_e(X,\mc.Y)$, then
	\begin{gather}
	\label{eq:r_0}
	i(\alpha,\mc.\beta) \leq i(\alpha,\Re(q_s)) \cdot i(\mc.\beta,\Im(q_e)) \cdot e^r \cdot (1 + A \cdot e^{-\kappa_2 r}).
	\end{gather}
	
	As explained in the proof of Proposition \ref{prop:up_2}, see (\ref{eq:ta}) and (\ref{eq:tb}), the following identities hold for every mapping class $\mc \in \mcg$ such that $\mc.Y \neq X$,
	\begin{gather}
	i(\alpha,\Re(q_s(X,\mc.Y))) = i(\alpha,E_X([\Re(q_s(X,\mc.Y))])), \label{eq:ra}\\
	i(\mc.\beta,\Im(q_e(X,\mc.Y))) = i(\beta,E_Y([\Re(q_s(Y,\mc^{-1}.X))])). \label{eq:rb}
	\end{gather}
	
	Fix $U,V \subseteq \pmf$ non-empty and $L > 1$. Consider the parameter
	\[
	R':= \log(M(\alpha,X,U)^{-1} \cdot M(\beta,Y,V)^{-1} \cdot (1+A \cdot L^{-\kappa_3})^{-1} \cdot L).
	\]
	Consider the truncated bisector counting function
	\[
	N^\dagger(X,Y,U,V,R') := \# \left\lbrace
	\begin{array}{l | l}
	\mc \in \mcg \setminus M \ & \ d_\mathcal{T}(X,\mc.Y) \leq R', \\ 
	& \ \mc.Y \in \mathrm{Sect}_U(X), \\
	& \ \mc^{-1}.X \in \mathrm{Sect}_V(Y).
	\end{array}
	\right\rbrace.
	\]
	The compactness of $K \subseteq \mathcal{C}_g^*$, $\mathcal{K} \subseteq \mathcal{T}_g$, and $\pmf$, and the assumption that $\beta$ is filling ensure that
	\begin{gather}
	M(\alpha,X,U) \geq \inf_{[\lambda] \in \pmf} i (\alpha, E_X([\lambda])) \succeq_{\mathcal{K},K} 1, \label{eq:r1}\\
	M(\beta,Y,V) \geq \inf_{[\lambda] \in \pmf} i (\beta, E_Y([\lambda])) \succeq_{\mathcal{K},\beta} 1. \label{eq:r2}
	\end{gather}
	As $M \subseteq \mcg$ is $(X,Y,C,\kappa_2)$-sparse, (\ref{eq:r1}) and (\ref{eq:r2}) imply
	\[
	\# \{\mc \in M \ | \ d_\mathcal{T}(X,\mc.Y) \leq R'\} \preceq_{\mathcal{K},K,\beta} L^{6g-6-\kappa_2} \preceq_{\mathcal{K},K,\beta} L^{6g-6-\kappa_3}.
	\]
	In particular, the following estimate holds,
	\begin{equation}
	\label{eq:r3}
	N(X,Y,U,V,R') = N^\dagger(X,Y,U,V,R') + O_{\mathcal{K},K,\beta}\left(L^{6g-6-\kappa_3}\right).
	\end{equation}
	
	Consider now the further truncated bisector counting function
	\[
	N^\ddagger(X,Y,U,V,R') := \# \left\lbrace
	\begin{array}{l | l}
	\mc \in \mcg \setminus M \ & \ d_\mathcal{T}(X,\mc.Y) \leq R', \\ 
	& \ d_\mathcal{T}(X,\mc.Y) \geq \log(L^{1/2}), \\
	& \ \mc.Y \in \mathrm{Sect}_U(X), \\
	& \ \mc^{-1}.X \in \mathrm{Sect}_V(Y).
	\end{array}
	\right\rbrace.
	\]
	Theorem \ref{theo:teich_count} guarantees that
	\[
	\# \{\mc \in \mcg \ | \ d_\mathcal{T}(X,\mc.Y) \leq \log(L^{1/2}) \} \preceq_{\mathcal{K}} L^{3g-3} \preceq_{\mathcal{K}} L^{6g-6-\kappa_3}.
	\]
	In particular, the following estimate holds,
	\begin{equation}
	\label{eq:r4}
	N^\dagger(X,Y,U,V,R') = N^\ddagger(X,Y,U,V,R') + O_{\mathcal{K}}\left( L^{6g-6-\kappa_3}\right).
	\end{equation}
	
	The bound in (\ref{eq:r_0}) together with the identities in (\ref{eq:ra}) and (\ref{eq:rb}) ensure that every mapping class $\mc \in \mcg \setminus M$ such that $d_\mathcal{T}(\alpha,\mc.\beta) \leq R'$, $d_\mathcal{T}(X,\mc.Y) \geq \log(L^{1/2})$, $\mc.Y \in \mathrm{Sect}_U(X)$, and $\mc^{-1}.X \in \mathrm{Sect}_V(Y)$ must also satisfy $i(\alpha,\beta) \leq L$. In particular, the following bound holds,
	\begin{equation}
	\label{eq:r5}
	N^\ddagger(X,Y,U,V,R') \leq  f(\alpha,\beta,X,Y,U,V,L).
	\end{equation}
	
	Putting together (\ref{eq:r3}), (\ref{eq:r4}), and (\ref{eq:r5}), we conclude
	\[
	N(X,Y,U,V,R') - f(\alpha,\beta,X,Y,U,V,L) \preceq_{\mathcal{K},K,\beta} L^{6g-6-\kappa_3}. \qedhere
	\]
\end{proof}

Putting together Propositions \ref{prop:up_2} and \ref{prop:low_2} we obtain the following bounds for the local counting function $f(\alpha,\beta,X,Y,U,V,L)$ in terms of the bisector counting function $N(X,Y,U,V,R)$. 

\begin{proposition} 
	\label{prop:total_2}
	There exists a constant $\kappa_3 = \kappa_3(g) > 0$ such that for every compact subset $\mathcal{K} \subseteq \mathcal{T}_g$, every compact subset $K \subseteq \mathcal{C}_g^*$, and every filling closed curve $\beta$ on $S_g$ there exist constants $A = A(\mathcal{K},K,\beta) > 0$ and  $L_1 = L_1(\mathcal{K},K,\beta) > 0$ such that for every $\alpha \in K$, every $X,Y \in \mathcal{K}$, every $U,V \subseteq \pmf$ non-empty, and every $L > L_1$, if 
	\begin{gather*}
	R:= \log(m(\alpha,X,U)^{-1} \cdot m(\beta,Y,V)^{-1} \cdot (1-A \cdot L^{-\kappa_3})^{-1} \cdot L),\\
	R' := \log(M(\alpha,X,U)^{-1} \cdot M(\beta,Y,V)^{-1} \cdot (1+A \cdot L^{-\kappa_3})^{-1} \cdot L),
	\end{gather*}
	then the following bounds hold,
	\begin{gather*}
	f(\alpha,\beta,X,Y,U,V,L) - N(X,Y,U,V,R) \preceq_{\mathcal{K},K,\beta} L^{6g-6-\kappa_3},\\
	N(X,Y,U,V,R') - f(\alpha,\beta,X,Y,U,V,L) \preceq_{\mathcal{K},K,\beta} L^{6g-6-\kappa_3}.
	\end{gather*}
\end{proposition}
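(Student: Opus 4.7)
The statement is essentially the concatenation of Propositions \ref{prop:up_2} and \ref{prop:low_2}, so my plan is to extract the constants from both results, reconcile them, and assemble the two bounds.

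First, I would let $\kappa_3 = \kappa_3(g) > 0$ be the common constant produced by Propositions \ref{prop:up_2} and \ref{prop:low_2}; both are defined as $\min\{\kappa_2/2, 3g-3\}$ with the same $\kappa_2$ from Theorem \ref{theo:track_mult}, so they agree. Given a compact subset $\mathcal{K} \subseteq \mathcal{T}_g$, a compact subset $K \subseteq \mathcal{C}_g^*$, and a filling closed curve $\beta$ on $S_g$, I would invoke each proposition separately to obtain constants $A_{\mathrm{up}}, A_{\mathrm{low}} > 0$ and a threshold $L_{1,\mathrm{up}} > 0$, each depending only on $\mathcal{K}, K, \beta$. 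I would then set $A := \max\{A_{\mathrm{up}}, A_{\mathrm{low}}\}$ and $L_1 := \max\{L_{1,\mathrm{up}}, 1\}$.

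Next I would verify that replacing the proposition-specific $A_{\mathrm{up}}$ (resp.\ $A_{\mathrm{low}}$) by the larger $A$ in the definition of $R$ (resp.\ $R'$) only strengthens the bound in the direction it is needed. More precisely, increasing $A$ in the formula for $R$ decreases $R$, which only makes $N(X,Y,U,V,R)$ smaller, so the upper bound for $f(\alpha,\beta,X,Y,U,V,L) - N(X,Y,U,V,R)$ in Proposition \ref{prop:up_2} is preserved (up to absorbing a harmless factor into the implicit constant via the $L > L_1$ condition, which I would check by noting $(1 - A \cdot L^{-\kappa_3})^{-1}$ remains bounded above for $L$ large). A symmetric check handles Proposition \ref{prop:low_2}: increasing $A$ in the formula for $R'$ decreases $R'$, which only decreases $N(X,Y,U,V,R')$, so the difference $N(X,Y,U,V,R') - f(\alpha,\beta,X,Y,U,V,L)$ can only decrease.

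Finally, with these unified constants in hand, applying Proposition \ref{prop:up_2} yields the first displayed bound and applying Proposition \ref{prop:low_2} yields the second, completing the proof. There is no substantive obstacle here; the only care needed is the bookkeeping of the constants $A$ and $L_1$ so that a single choice works in both inequalities simultaneously.
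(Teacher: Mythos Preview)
Your approach is correct and matches the paper, which simply states that the proposition follows by ``putting together Propositions \ref{prop:up_2} and \ref{prop:low_2}.'' However, there is a sign slip in your monotonicity check for $R$: since $R = \log\bigl(m(\alpha,X,U)^{-1} m(\beta,Y,V)^{-1} (1 - A L^{-\kappa_3})^{-1} L\bigr)$, increasing $A$ makes $1 - A L^{-\kappa_3}$ smaller and hence $(1 - A L^{-\kappa_3})^{-1}$ larger, so $R$ \emph{increases}, not decreases. Fortunately this is still the direction you need: larger $R$ means larger $N(X,Y,U,V,R)$, hence $f - N$ can only decrease, and the bound from Proposition \ref{prop:up_2} is preserved. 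Your argument for $R'$ is correct as written.

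Two further remarks. First, if you enlarge $A$ you must also enlarge $L_1$ accordingly (e.g.\ to $\max\{L_{1,\mathrm{up}}, (2A)^{1/\kappa_3}, 1\}$) so that $1 - A L^{-\kappa_3}$ stays bounded away from zero; your choice $L_1 = \max\{L_{1,\mathrm{up}}, 1\}$ does not guarantee this. Second, the reconciliation step is in fact unnecessary: inspecting the proofs of Propositions \ref{prop:up_2} and \ref{prop:low_2} shows that both take the \emph{same} constant $A = A(\mathcal{K},K,\beta)$ directly from Theorem \ref{theo:track_mult}, so $A_{\mathrm{up}} = A_{\mathrm{low}}$ and the two bounds combine with no adjustment at all.
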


The bounds in Proposition \ref{prop:total_2} for the local counting function $f(\alpha,\beta,X,Y,U,V,L)$ can be further developed using the effective estimate in Theorem \ref{theo:bisect_count} for the bisector counting function $N(X,Y,U,V,R)$ under the assumption that $U,V \subseteq \pmf$ are simplices in Dehn-Thurston coordinates. To explore this idea further, let us review some of the relevant notation introduced in previous sections.

Recall that $\Sigma$ denotes the piecewise linear manifold $\Sigma := \mathbf{R}^2 / \langle-1\rangle$ endowed with the quotient Euclidean metric. Recall that the product $\IT^{3g-3}$ is a piecewise linear manifold which we endow with the $L^2$ product metric.  Recall that the projectivization $P\Sigma^{3g-3}$ of $\smash{\Sigma^{3g-3}}$ can be canonically identified with the subset $\smash{\Sigma_u^{3g-3}} \subseteq \smash{\Sigma^{3g-3}}$ of points in $\Sigma^{3g-3}$ of unit $L^1$-norm through a homeomorphism we denote by $u \colon \pmf \to \smash{\Sigma_u^{3g-3}}$. Recall that we endow the subset $\smash{\Sigma_u^{3g-3}} \subseteq \Sigma^{3g-3}$ with the induced Euclidean metric. Recall that the subset $\smash{\Sigma_u^{3g-3}} \subseteq \Sigma^{3g-3}$ can be represented as a union of $2^{3g-3}$ closed affine simplices of dimension $6g-7$ we refer to as the facets of $\smash{\Sigma_u^{3g-3}}$.

Recall that any set of Dehn-Thurston coodinates provides a homeomorphism $F \colon \mf \to \IT^{3g-3}$ equivariant with respect to the natural $\mathbf{R}^+$ scaling actions on $\mf$ and $\IT^{3g-3}$. Recall that we denote the projectivization of any such homeomorphism by $PF \colon \pmf \to P\Sigma^{3g-3}$. Recall that a measurable subset $U \subseteq \pmf$ is said to be an $F$-simplex if there exists an open affine simplex $W \subseteq \smash{\Sigma_u^{3g-3}}$ of dimension $6g-7$ contained in an $F$-facet of $\smash{\Sigma_u^{3g-3}}$ such that $W \subseteq u \circ PF(U) \subseteq \smash{\overline{W}}$.

Recall the definition of the constant $b_g > 0$ introduced in (\ref{eq:bg}). Recall that $\nu_\mathrm{Thu}$ denotes the Thurston measure on $\mf$. Recall that for every $X \in \mathcal{T}_g$ we consider the measure $\nu_X$ on $\pmf$ which to every measurable subset $U \subseteq \pmf$ assigns the value
\[
\nu_X(U) := \nu_{\mathrm{Thu}}\left(\left\lbrace\lambda \in \mf \ | \ \SExt_X(\lambda) \leq 1, \ [\lambda] \in U \right\rbrace \right).
\]

Combining Proposition \ref{prop:total_2} with Theorem \ref{theo:bisect_count} we obtain the following bounds for the local counting function $f(\alpha,\beta,X,Y,U,V,L)$ independent of the bisector counting function $N(X,Y,U,V,R)$.

\begin{proposition} 
	\label{prop:total_DT}
	There exists $\kappa_4 = \kappa_4(g) > 0$ such that for every $\mathcal{K} \subseteq \mathcal{T}_g$ compact, every $K \subseteq \mathcal{C}_g^*$ compact, and every filling closed curve $\beta$ on $S_g$ there exist constants $A = A(\mathcal{K},K,\beta) > 0$ and  $L_2 = L_2(\mathcal{K},K,\beta) > 0$ such that for every $\alpha \in K$, every $X,Y \in \mathcal{K}$, every set of Dehn-Thurston coordinates $F \colon \mf \to \smash{\Sigma^{3g-3}}$, every pair of non-empty $F$-simplices $U,V \subseteq \pmf$, and every $L > L_2$, if 
	\begin{gather*}
	C:= m(\alpha,X,U) \cdot m(\beta,Y,V) \cdot (1-A \cdot L^{-\kappa_4}),\\
	C':= M(\alpha,X,U) \cdot M(\beta,Y,V) \cdot (1+A \cdot L^{-\kappa_4}),
	\end{gather*}
	then the following bounds hold,
	\begin{gather*}
	f(\alpha,\beta,X,Y,U,V,L) - \frac{\nu_X(U) \cdot \nu_Y(V)}{C^{6g-6} \cdot b_g} \cdot L^{6g-6} \preceq_{\mathcal{K},K,\beta,F} L^{6g-6-\kappa_4},\\
	\frac{\nu_X(U) \cdot \nu_Y(V)}{(C')^{6g-6} \cdot b_g} \cdot L^{6g-6}  - f(\alpha,\beta,X,Y,U,V,L) \preceq_{\mathcal{K},K,\beta,F} L^{6g-6-\kappa_4}.
	\end{gather*}
\end{proposition}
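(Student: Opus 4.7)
The plan is to combine Proposition \ref{prop:total_2} with the bisector counting estimate of Theorem \ref{theo:bisect_count} in a direct manner. Let $\kappa_3 = \kappa_3(g) > 0$ be the constant from Proposition \ref{prop:total_2} and $\kappa_1 = \kappa_1(g) > 0$ be the constant from Theorem \ref{theo:bisect_count}. I would set $\kappa_4 := \min\{\kappa_1, \kappa_3\} > 0$ and choose the constant $A$ to be the larger of the two constants produced by Proposition \ref{prop:total_2} (relative to $\kappa_4$ instead of $\kappa_3$, which only requires enlarging $A$).

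The preliminary step is to establish that $m(\alpha,X,U)$, $M(\alpha,X,U)$, $m(\beta,Y,V)$, $M(\beta,Y,V) \asymp_{\mathcal{K},K,\beta} 1$ uniformly in the non-empty subsets $U, V \subseteq \pmf$. Since $E_X([\lambda]) \in \mf \setminus \{0\}$ and $\alpha$ is filling, $i(\alpha, E_X([\lambda])) > 0$ everywhere on $\pmf$; continuity together with compactness of $\mathcal{K}$, $K$, and $\pmf$ then forces both upper and lower bounds on $i(\alpha, E_X([\lambda]))$ that depend only on $\mathcal{K}$ and $K$. The analogous argument for $i(\beta, E_Y([\mu]))$ uses the assumption that $\beta$ is filling. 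Since $U, V \subseteq \pmf$, these bounds transfer to $m, M$. Choosing $L_2 = L_2(\mathcal{K},K,\beta) > 0$ large enough that $A \cdot L^{-\kappa_4} < 1/2$ for all $L > L_2$, I conclude $C, C' \asymp_{\mathcal{K},K,\beta} 1$.

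Next I would plug the defining formulas for $R, R'$ into Theorem \ref{theo:bisect_count}. A direct computation yields $e^R = L/C$ and $e^{R'} = L/C'$, hence $e^{(6g-6)R} = L^{6g-6}/C^{6g-6}$ and similarly for $R'$. Since $U, V$ are $F$-simplices, Theorem \ref{theo:bisect_count} gives
\[
N(X,Y,U,V,R) = \frac{\nu_X(U) \cdot \nu_Y(V)}{b_g \cdot C^{6g-6}} \cdot L^{6g-6} + O_{\mathcal{K},F}\!\left(e^{(6g-6-\kappa_1)R}\right),
\]
and the analogous identity for $R'$. The uniform boundedness $C, C' \asymp_{\mathcal{K},K,\beta} 1$ established in the previous paragraph upgrades the error term to $e^{(6g-6-\kappa_1)R} \preceq_{\mathcal{K},K,\beta} L^{6g-6-\kappa_1} \leq L^{6g-6-\kappa_4}$.

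Finally, concatenating these identities with the two one-sided inequalities supplied by Proposition \ref{prop:total_2} yields the desired upper and lower bounds on $f(\alpha,\beta,X,Y,U,V,L)$, with implicit constants depending on $\mathcal{K}, K, \beta, F$ (the dependence on $F$ enters through Theorem \ref{theo:bisect_count}). The only genuinely delicate point — and what I would flag as the main obstacle — is ensuring that the uniform lower bound $C, C' \succeq_{\mathcal{K},K,\beta} 1$ holds \emph{independently of the simplices $U, V$}, so that the bisector error term $e^{(6g-6-\kappa_1)R}$ does not blow up when $U$ or $V$ is pushed close to a region where $i(\alpha, E_X(\cdot))$ or $i(\beta, E_Y(\cdot))$ might degenerate; the filling hypotheses on $\alpha$ and $\beta$, together with the compactness of $\pmf$, are precisely what prevent this degeneracy.
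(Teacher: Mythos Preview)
Your proposal is correct and follows essentially the same route as the paper: set $\kappa_4 = \min\{\kappa_1,\kappa_3\}$, feed the $R,R'$ from Proposition \ref{prop:total_2} into Theorem \ref{theo:bisect_count}, and use the filling hypotheses together with compactness of $\mathcal{K}$, $K$, and $\pmf$ to obtain the uniform bounds $m,M \asymp_{\mathcal{K},K,\beta} 1$ needed to control the error terms. One small remark: you do not actually need to enlarge $A$ when replacing $\kappa_3$ by $\kappa_4$ in the definition of $R,R'$, since $\kappa_4 \leq \kappa_3$ makes the new $R$ larger and the new $R'$ smaller, and the monotonicity of $N(X,Y,U,V,\cdot)$ preserves the one-sided bounds from Proposition \ref{prop:total_2}.
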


\begin{proof}
	Let $\kappa_1 = \kappa_1(g) > 0$ be as in Theorem \ref{theo:bisect_count}, $\kappa_3 = \kappa_3(g) > 0$ be as in Proposition \ref{prop:total_2}, and $\kappa_4 = \kappa_4(g) := \min\{\kappa_1,\kappa_3\} > 0$. Fix a compact subset $\mathcal{K} \subseteq \mathcal{T}_g$, a compact subset $K \subseteq \mathcal{C}_g^*$, and a filling closed curve $\beta$ on $S_g$. Let $A = A(\mathcal{K},K,\beta) > 0$ and $L_1 = L_1(\mathcal{K},K,\beta) > 0$ be as in Proposition \ref{prop:total_2}. Consider the constant $L_2 = L_2(\mathcal{K},K,\beta) := \max\{L_0,(2A)^{1/\kappa_4},1\} > 0$. Fix $\alpha \in K$ and $X,Y \in \mathcal{K}$. Let $F \colon \mf \to \Sigma^{3g-3}$ be a set of Dehn-Thurston coordinates of $\mf$. Consider a pair of non-empty $F$-simplices $U,V \subseteq \pmf$. Let $L > L_2$. Notice that $1-A \cdot L^{-\kappa_4} > 1/2$. Consider the parameters
	\begin{gather*}
	R:= \log(m(\alpha,X,U)^{-1} \cdot m(\beta,Y,V)^{-1} \cdot (1-A \cdot L^{-\kappa_4})^{-1} \cdot L),\\
	R' := \log(M(\alpha,X,U)^{-1} \cdot M(\beta,Y,V)^{-1} \cdot (1+A \cdot L^{-\kappa_4})^{-1} \cdot L).
	\end{gather*}
	Proposition \ref{prop:total_2} ensures that
	\begin{gather}
	f(\alpha,\beta,X,Y,U,V,L) - N(X,Y,U,V,R) \preceq_{\mathcal{K},K,\beta} L^{6g-6-\kappa_4}, \label{eq:y1} \\
	N(X,Y,U,V,R') - f(\alpha,\beta,X,Y,U,V,L) \preceq_{\mathcal{K},K,\beta} L^{6g-6-\kappa_4}. \label{eq:y2}
	\end{gather}
	
	Applying Theorem \ref{theo:bisect_count} we deduce
	\begin{gather}
	N(X,Y,U,V,R) = \frac{\nu_X(U) \cdot \nu_Y(V)}{b_g} \cdot e^{(6g-6)R} + O_{\mathcal{K},F}\left(e^{(6g-6-\kappa_4)R}\right), \label{eq:y3}\\
	N(X,Y,U,V,R') = \frac{\nu_X(U) \cdot \nu_Y(V)}{b_g} \cdot e^{(6g-6)R'} + O_{\mathcal{K},F}\left(e^{(6g-6-\kappa_4)R'}\right). \label{eq:y4}
	\end{gather}
	Consider the parameters
	\begin{gather*}
	C:= m(\alpha,X,U) \cdot m(\beta,Y,V) \cdot (1-A \cdot L^{-\kappa_4}),\\
	C':= M(\alpha,X,U) \cdot M(\beta,Y,V) \cdot (1+A \cdot L^{-\kappa_4}).
	\end{gather*}
	A direct computation shows that
	\begin{gather}
	e^{(6g-6)R} = C^{-(6g-6)} \cdot L^{6g-6}, \label{eq:y5}\\
	e^{(6g-6)R'} = (C')^{-(6g-6)} \cdot L^{6g-6}. \label{eq:y6}
	\end{gather}
	The compactness of $K \subseteq \mathcal{C}_g^*$, $\mathcal{K} \subseteq \mathcal{T}_g$, and $\pmf$, and the assumption that $\beta$ is filling ensure that
	\begin{gather*}
	M(\alpha,X,U) \geq m(\alpha,X,U) \geq \inf_{[\lambda] \in \pmf} i (\alpha, E_X([\lambda])) \succeq_{\mathcal{K},K} 1, \\
	M(\beta,Y,V) \geq m(\beta,Y,V) \geq \inf_{[\lambda] \in \pmf} i (\beta, E_Y([\lambda])) \succeq_{\mathcal{K},\beta} 1. 
	\end{gather*}
	These bounds together with the observation $1 + A \cdot L^{-\kappa_4} > 1-A \cdot L^{-\kappa_4} > 1/2$ imply
	\begin{gather}
	e^{(6g-6-\kappa_4)R} \preceq_{\mathcal{K},K,\beta} L^{6g-6-\kappa_4}, \label{eq:y7}\\
	e^{(6g-6-\kappa_4)R'} \preceq_{\mathcal{K},K,\beta} L^{6g-6-\kappa_4}. \label{eq:y8}
	\end{gather}
	
	Putting together (\ref{eq:y1}), (\ref{eq:y2}), (\ref{eq:y3}), (\ref{eq:y4}), (\ref{eq:y5}), (\ref{eq:y6}), (\ref{eq:y7}), and (\ref{eq:y8}), we conclude
	\begin{gather*}
	f(\alpha,\beta,X,Y,U,V,L) - \frac{\nu_X(U) \cdot \nu_Y(V)}{C^{6g-6} \cdot b_g} \cdot L^{6g-6} \preceq_{\mathcal{K},K,\beta,F} L^{6g-6-\kappa_4},\\
	\frac{\nu_X(U) \cdot \nu_Y(V)}{(C')^{6g-6} \cdot b_g} \cdot L^{6g-6}  - f(\alpha,\beta,X,Y,U,V,L) \preceq_{\mathcal{K},K,\beta,F} L^{6g-6-\kappa_4}. \qedhere
	\end{gather*}
\end{proof}

To obtain an effective estimate for the local counting function $f(\alpha,\beta,X,Y,U,V,L)$ from the bounds in Proposition \ref{prop:total_DT} we need to control the difference between the leading terms of these bounds. Given a set of Dehn-Thurston coordinates $F \colon \mf \to \Sigma^{3g-3}$ and a subset $U \subseteq \pmf$, denote by $\mathrm{diam}_F(U)$ the diameter of $u \circ PF (U) \subseteq \smash{\Sigma_u^{3g-3}}$ with respect to the induced Euclidean metric on $\smash{\Sigma_u^{3g-3}} \subseteq \smash{\Sigma^{3g-3}}$. The following estimate is a direct consequence of Theorems \ref{theo:curr_lip} and \ref{theo:ext_lip_2}.

\begin{proposition}
	\label{prop:bound_1}
	Let $K \subseteq \mathcal{C}_g$ compact, $\mathcal{K} \subseteq \mathcal{T}_g$ compact, and $F \colon \mf \to \Sigma^{3g-3}$ be a set of Dehn-Thurston coordinates. Then, for every $\alpha \in K$, every $X \in \mathcal{K}$, and every $U \subseteq \pmf$ non-empty,
	\[
	M(\alpha,X,U) - m(\alpha,X,U) \preceq_{K,\mathcal{K},F} \mathrm{diam}_F(U).
	\]
\end{proposition}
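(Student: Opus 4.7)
The plan is to view the function $[\lambda] \mapsto i(\alpha, E_X([\lambda]))$ as a composition of two maps, each already shown to be Lipschitz in Dehn--Thurston coordinates, and then read the desired inequality off directly from the Lipschitz property of the composition.

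First, I would fix $\alpha \in K$, $X \in \mathcal{K}$, a set of Dehn--Thurston coordinates $F \colon \mf \to \Sigma^{3g-3}$, and a non-empty subset $U \subseteq \pmf$. By Theorem \ref{theo:ext_lip_2}, the map $F \circ E_X \circ PF^{-1} \circ u^{-1} \colon \Sigma_u^{3g-3} \to \Sigma^{3g-3}$ is $L'$-Lipschitz for some constant $L' = L'(F,\mathcal{K}) > 0$ depending only on $F$ and $\mathcal{K}$. By Theorem \ref{theo:curr_lip}, the map $i(\alpha,\cdot) \circ F^{-1} \colon \Sigma^{3g-3} \to \mathbf{R}$ is $L$-Lipschitz for some constant $L = L(F,K) > 0$ depending only on $F$ and $K$. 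Composing these two observations, the function
\[
\Psi \colon \Sigma_u^{3g-3} \to \mathbf{R}, \quad \Psi := \bigl(i(\alpha,\cdot) \circ F^{-1}\bigr) \circ \bigl(F \circ E_X \circ PF^{-1} \circ u^{-1}\bigr) = i(\alpha, E_X(\cdot)) \circ PF^{-1} \circ u^{-1}
\]
is $(L \cdot L')$-Lipschitz, where $L \cdot L'$ depends only on $F$, $K$, and $\mathcal{K}$.

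Next, for any $[\lambda_0], [\lambda_1] \in U$, setting $x_j := u \circ PF([\lambda_j]) \in u \circ PF(U) \subseteq \Sigma_u^{3g-3}$, the Lipschitz bound yields
\[
\bigl| i(\alpha, E_X([\lambda_0])) - i(\alpha, E_X([\lambda_1])) \bigr| = |\Psi(x_0) - \Psi(x_1)| \leq L \cdot L' \cdot \|x_0 - x_1\| \leq L \cdot L' \cdot \mathrm{diam}_F(U),
\]
where the last inequality is the definition of $\mathrm{diam}_F(U)$ as the diameter of $u \circ PF(U)$ in the induced Euclidean metric on $\Sigma_u^{3g-3}$. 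Taking the supremum of the left-hand side over $[\lambda_0], [\lambda_1] \in U$ gives $M(\alpha,X,U) - m(\alpha,X,U) \leq L \cdot L' \cdot \mathrm{diam}_F(U)$, which is exactly the asserted bound with implicit constant depending only on $K$, $\mathcal{K}$, and $F$.

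There is no serious obstacle here; the statement is essentially a packaging of Theorems \ref{theo:curr_lip} and \ref{theo:ext_lip_2}. The only minor point worth checking is that the two Lipschitz constants, which a priori are defined with respect to the ambient Euclidean metric on $\Sigma^{3g-3}$, chain correctly through the intermediate space $\Sigma^{3g-3}$: this is automatic since $F \circ E_X \circ PF^{-1} \circ u^{-1}$ lands in $\Sigma^{3g-3}$ (as the image of the extremal-length normalization map), which is precisely the domain on which $i(\alpha,\cdot) \circ F^{-1}$ is Lipschitz.
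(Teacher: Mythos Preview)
Your proof is correct and follows essentially the same approach as the paper: both factor the map $[\lambda] \mapsto i(\alpha, E_X([\lambda]))$ through $\Sigma_u^{3g-3} \to \Sigma^{3g-3} \to \mathbf{R}$, invoke Theorems~\ref{theo:curr_lip} and~\ref{theo:ext_lip_2} to bound the Lipschitz constants of the two factors uniformly in $K$, $\mathcal{K}$, and $F$, and then read off the oscillation bound from the definition of $\mathrm{diam}_F(U)$.
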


\begin{proof}
	Fix $\alpha \in K$, $X \in \mathcal{K}$, and $U \subseteq \pmf$ non-empy. Recall that
	\begin{gather*}
	m(\alpha,X,U) := \inf_{[\lambda] \in U} i (\alpha, E_X([\lambda])),\\
	M(\alpha,X,U) := \sup_{[\lambda] \in U} i (\alpha, E_X([\lambda])).
	\end{gather*}
	Notice that the function $i(\alpha,E_X(\cdot)) \colon \pmf \to \mathbf{R}^+$ can be written as the composition of the maps
	\begin{gather}
	i(\alpha,\cdot) \circ F^{-1} \colon \Sigma^{3g-3} \to \mathbf{R}^+, \label{eq:n1}\\
	F \circ E_X \circ PF^{-1} \circ u^{-1} \colon \smash{\Sigma_u^{3g-3}} \to \Sigma^{3g-3}, \label{eq:n2}\\
	u \circ PF \colon \pmf \to \smash{\Sigma_u^{3g-3}}. \label{eq:n3}
	\end{gather}
	Theorems \ref{theo:curr_lip} and \ref{theo:ext_lip_2} guarantee the maps in (\ref{eq:n1}) and (\ref{eq:n2}) are Lipschitz with Lipschitz constants bounded uniformly in terms of $K \subseteq \mathcal{C}_g$, $\mathcal{K} \subseteq \mathcal{T}_g$, and $F \colon \mf \to \Sigma^{3g-3}$. As $\mathrm{diam}_F(U)$ is by definition the diameter of the image of $U \subseteq \pmf$ under the map in (\ref{eq:n3}), we conclude
	\[
	M(\alpha,X,U) - m(\alpha,X,U) \preceq_{K,\mathcal{K},F} \mathrm{diam}_F(U). \qedhere
	\]
\end{proof}

Given a filling geodesic current $\alpha \in \mathcal{C}_g^*$ denote by $\nu_\alpha$ the measure on $\pmf$ which to every measurable subset $U \subseteq \pmf$ assigns the value
\[
\nu_\alpha(U) := \nu_{\mathrm{Thu}}\left(\left\lbrace\lambda \in \mf \ | \ i(\alpha,\lambda) \leq 1, \ [\lambda] \in U \right\rbrace \right).
\]
In the ensuing discussion we make use of the following scaling property of the Thurston measure: for every measurable subset $A \subseteq \mf$ and every $t > 0$,
\begin{equation}
\label{eq:scale_prop}
\nu_{\mathrm{Thu}}(t \cdot A) = t^{6g-6} \cdot \nu_{\mathrm{Thu}}(A).
\end{equation}

We now use the bound in Proposition \ref{prop:bound_1} to control the difference between the leading terms of the bounds in Proposition \ref{prop:total_DT} for the local counting function $f(\alpha,\beta,X,Y,U,V,L)$.

\begin{proposition}
	\label{prop:bound_2}
	Let $K \subseteq \mathcal{C}_g^*$ be a compact subset, $\mathcal{K} \subseteq \mathcal{T}_g$ be a compact subset, $\beta$ be a filling closed curve on $S_g$, $F \colon \mf \to \smash{\Sigma^{3g-3}}$ be a set of Dehn-Thurston coordinates, $A > 0$, $\kappa > 0$, and $L_3 = L_3(A,\kappa) := \smash{(2A)^{1/\kappa}} > 0$. Then, for every $\alpha \in K$, every $X,Y \in \mathcal{K}$, every pair of non-empty subsets $U,V \subseteq \pmf$, and every $L > L_3$, if 
	\begin{gather*}
	S := \nu_X(U) \cdot \nu_Y(V) \cdot \left(m(\alpha,X,U) \cdot m(\beta,Y,V) \cdot (1-A \cdot L^{-\kappa})\right)^{-(6g-6)},\\
	S' := \nu_X(U) \cdot \nu_Y(V) \cdot \left(M(\alpha,X,U) \cdot M(\beta,Y,V) \cdot (1+A \cdot L^{-\kappa})\right)^{-(6g-6)},
	\end{gather*}
	then the following bounds hold,
	\begin{gather}
	S' \leq \nu_\alpha(U) \cdot \nu_\beta(V) \leq S, \label{eq:m1}\\
	S - S' \preceq_{\mathcal{K},K,\beta,F} \nu_X(U) \cdot \nu_Y(V) \cdot\left(\mathrm{diam}_F(U) + \mathrm{diam}_F(V)\right) + A \cdot L^{-\kappa}. \label{eq:m2}
	\end{gather}
\end{proposition}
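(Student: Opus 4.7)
The plan is to reduce both inequalities to a comparison between the Thurston-measure slices $\{\lambda : i(\alpha,\lambda)\le 1,\,[\lambda]\in U\}$ and $\{\lambda : \SExt_X(\lambda)\le 1,\,[\lambda]\in U\}$ (and analogously for $\beta,Y,V$), using the key identity $i(\alpha,\lambda) = \SExt_X(\lambda)\cdot i(\alpha,E_X([\lambda]))$, which follows from the homogeneity of the intersection pairing and the definition $E_X([\lambda]) = \lambda/\SExt_X(\lambda)$.

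For \eqref{eq:m1}: for any $[\lambda]\in U$, the above identity shows that $i(\alpha,\lambda)\le 1$ is equivalent to $\SExt_X(\lambda)\le i(\alpha,E_X([\lambda]))^{-1}$. Bounding $i(\alpha,E_X([\lambda]))$ between $m(\alpha,X,U)$ and $M(\alpha,X,U)$ sandwiches the set $\{\lambda : i(\alpha,\lambda)\le 1,\,[\lambda]\in U\}$ between the $\mathbf{R}^+$-dilates of $\{\lambda : \SExt_X(\lambda)\le 1,\,[\lambda]\in U\}$ by the factors $M(\alpha,X,U)^{-1}$ and $m(\alpha,X,U)^{-1}$. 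Applying the scaling property \eqref{eq:scale_prop} of $\nu_{\mathrm{Thu}}$ then yields
\[
M(\alpha,X,U)^{-(6g-6)}\cdot \nu_X(U) \;\le\; \nu_\alpha(U) \;\le\; m(\alpha,X,U)^{-(6g-6)}\cdot \nu_X(U),
\]
and similarly for $\nu_\beta(V)$. Multiplying the two sandwiches and using the elementary inequalities $(1-A\cdot L^{-\kappa})^{-(6g-6)}\ge 1\ge (1+A\cdot L^{-\kappa})^{-(6g-6)}$ produces \eqref{eq:m1}.

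For \eqref{eq:m2}: abbreviate $m_1,M_1,m_2,M_2$ for the four extremal quantities and set $\delta:=A\cdot L^{-\kappa}$; the assumption $L > L_3$ forces $\delta<1/2$. Compactness of $\mathcal{K}$, $K$, and $\pmf$ combined with the filling assumptions on $\alpha\in K$ and $\beta$ gives uniform two-sided bounds $m_i,M_i\asymp_{\mathcal{K},K,\beta} 1$, so $m_1 m_2(1-\delta)$ and $M_1 M_2(1+\delta)$ lie in a fixed compact subinterval of $(0,\infty)$. Proposition \ref{prop:bound_1} supplies $M_1-m_1\preceq_{\mathcal{K},K,F}\mathrm{diam}_F(U)$ and $M_2-m_2\preceq_{\mathcal{K},\beta,F}\mathrm{diam}_F(V)$, whence
\[
M_1 M_2(1+\delta) - m_1 m_2(1-\delta) = (M_1-m_1)M_2 + m_1(M_2-m_2) + \delta(M_1 M_2 + m_1 m_2) \preceq_{\mathcal{K},K,\beta,F} \mathrm{diam}_F(U)+\mathrm{diam}_F(V)+\delta.
\]
The mean value theorem applied to $f(x)=x^{-(6g-6)}$ on $[m_1 m_2(1-\delta),\,M_1 M_2(1+\delta)]$, where $|f'|$ is bounded uniformly in the parameters, transfers this estimate to the difference $(m_1 m_2(1-\delta))^{-(6g-6)}-(M_1 M_2(1+\delta))^{-(6g-6)}$ of the reciprocal powers appearing in $S$ and $S'$.

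Multiplying by $\nu_X(U)\nu_Y(V)$ and invoking the uniform upper bound $\nu_X(U),\nu_Y(V)\le\Lambda_g$ allows the $\delta$-contribution to be absorbed as a free additive $A\cdot L^{-\kappa}$, yielding \eqref{eq:m2}. The main obstacle is the clean splitting of the $(\mathrm{diam}_F(U)+\mathrm{diam}_F(V))$ piece from the additive error $A\cdot L^{-\kappa}$, which relies essentially on the uniform lower bounds for $m_1,m_2$ furnished by the filling hypotheses and on the boundedness of the Thurston masses $\nu_X(U),\nu_Y(V)$ by the Hubbard-Masur constant.
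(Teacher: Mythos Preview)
Your proof is correct and follows essentially the same approach as the paper's: the sandwich \eqref{eq:m1} is obtained exactly via the scaling property of $\nu_{\mathrm{Thu}}$ together with the bounds $m \le i(\alpha,E_X([\lambda])) \le M$, and for \eqref{eq:m2} the paper uses the same compactness bounds, the same decomposition of $C'-C$, and the same appeal to Proposition~\ref{prop:bound_1} and the Hubbard--Masur bound $\nu_X(U),\nu_Y(V)\le\Lambda_g$. The only cosmetic difference is that the paper handles the difference of reciprocal powers via the algebraic factorization $(C')^{6g-6}-C^{6g-6}=(C'-C)\sum (C')^j C^k$ rather than the mean value theorem, which is an equivalent elementary step.
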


\begin{proof}
	Let $K \subseteq \mathcal{C}_g^*$ compact, $\mathcal{K} \subseteq \mathcal{T}_g$ compact, $\beta$ a filling closed curve on $S_g$, $F \colon \mf \to \smash{\Sigma^{3g-3}}$ a set of Dehn-Thurston coordinates, $A > 0$, $\kappa > 0$, and $L_3 = L_3(A,\kappa) := \smash{(2A)^{1/\kappa}} > 0$. Fix $\alpha \in K$, $X,Y \in \mathcal{K}$, non-empty subsets $U,V \subseteq \pmf$, and $L > L_3$. Notice that $1-A \cdot L^{-\kappa}  > 1/2$. Consider 
	\begin{gather}
		S := \nu_X(U) \cdot \nu_Y(V) \cdot \left(m(\alpha,X,U) \cdot m(\beta,Y,V) \cdot (1-A \cdot L^{-\kappa})\right)^{-(6g-6)}, \label{eq:ma}\\
		S' := \nu_X(U) \cdot \nu_Y(V) \cdot \left(M(\alpha,X,U) \cdot M(\beta,Y,V) \cdot (1+A \cdot L^{-\kappa})\right)^{-(6g-6)}. \label{eq:mb}
	\end{gather}
	
	We first show (\ref{eq:m1}) holds. A direct computation using the scaling property in (\ref{eq:scale_prop})  shows that
	\begin{gather*}
	M(\alpha,X,U)^{-(6g-6)} \cdot \nu_X(U) \leq \nu_\alpha(U) \leq m(\alpha,X,U)^{-(6g-6)} \cdot \nu_X(U),\\
	M(\beta,Y,V)^{-(6g-6)} \cdot \nu_Y(V) \leq \nu_\beta(V) \leq m(\beta,Y,V)^{-(6g-6)} \cdot \nu_Y(V).
	\end{gather*}
	Using these bounds and the observation $(1+A \cdot L^{-\kappa})^{-(6g-6)} < 1 < (1-A \cdot L^{-\kappa})^{-(6g-6)}$ we conclude
	\[
	S' \leq \nu_\alpha(U) \cdot \nu_\beta(V) \leq S.
	\]
	
	We now show (\ref{eq:m2}) holds. For convenience we denote
	\begin{align*}
	\hspace{+4.3cm} &m = m(\alpha,X,U), & &m' = m(\beta,Y,V), \hspace{+4.2cm}\\
	&M = M(\alpha,X,U), & &M' = M(\beta,Y,V).
	\end{align*}
	Consider the parameters
	\begin{gather*}
	C:= m \cdot m' \cdot (1-A \cdot L^{-\kappa}).\\
	C':=M \cdot M' \cdot (1+A \cdot L^{-\kappa}),
	\end{gather*}
	The compactness of $K \subseteq \mathcal{C}_g^*$, $\mathcal{K} \subseteq \mathcal{T}_g$, and $\pmf$, and the assumption that $\beta$ is filling ensure that
	\begin{align}
	\hspace{+4.1cm} &m \asymp_{\mathcal{K},K} 1, & &m' \asymp_{\mathcal{K},\beta} 1, \hspace{+4.9cm} \label{eq:m3}\\
	&M \asymp_{\mathcal{K},K} 1, & &M' \asymp_{\mathcal{K},\beta} 1. \label{eq:m4}
	\end{align}
	As a consequence of (\ref{eq:m3}) and (\ref{eq:m4}) we deduce
	\begin{equation}
	\label{eq:m5}
	\frac{1}{C^{6g-6}} - \frac{1}{(C')^{6g-6}} = (C' - C) \cdot \frac{\sum_{i=0}^{6g-5} (C')^{6g-5-i} \cdot C^i}{C^{6g-6} \cdot (C')^{6g-6}} \preceq_{\mathcal{K},K,\beta} C'- C.
	\end{equation}
	A direct computation shows that 
	\begin{equation}
	\label{eq:m6}
	C' - C = \left( M \cdot M' - m \cdot m' \right) + \left(M \cdot M' + m \cdot m'\right) \cdot A \cdot L^{-\kappa}.
	\end{equation}
	As a consequence of (\ref{eq:m3}) and (\ref{eq:m4}) we deduce
	\begin{equation}
	\label{eq:m7}
	\left(M \cdot M' + m \cdot m'\right) \cdot A \cdot L^{-\kappa} \preceq_{\mathcal{K},K,\beta} A \cdot L^{-\kappa}.
	\end{equation}
	A direct computation using (\ref{eq:m3}) and (\ref{eq:m4}) implies
	\begin{equation}
	\label{eq:m8}
	M \cdot M' - m \cdot m' = (M-m) \cdot M' + m \cdot (M'-m') \preceq_{\mathcal{K},K,\beta} (M-m) + (M' - m').
	\end{equation}
	Proposition \ref{prop:bound_1} guarantees
	\begin{gather*}
	M - m \preceq_{\mathcal{K},K,F} \mathrm{diam}_F(U),\\
	M' - m' \preceq_{\mathcal{K},\beta,F} \mathrm{diam}_F(V).
	\end{gather*}
	Putting together (\ref{eq:m5}), (\ref{eq:m6}), (\ref{eq:m7}), and (\ref{eq:m8}) we deduce
	\begin{equation}
	\label{eq:m9}
	\frac{1}{C^{6g-6}} - \frac{1}{(C')^{6g-6}} \preceq_{\mathcal{K},K,\beta,F} \mathrm{diam}_F(U) + \mathrm{diam}_F(V) + A \cdot L^{-\kappa}.
	\end{equation}
	
	Recall the definition of the constant $\Lambda_g > 0$ in (\ref{eq:hmc}). Notice that
	\begin{equation}
	\label{eq:m10}
	\nu_X(U) \leq \Lambda_g, \quad \nu_Y(V) \leq \Lambda_g.
	\end{equation}
	Putting together (\ref{eq:ma}), (\ref{eq:mb}), (\ref{eq:m9}), and (\ref{eq:m10}) we conclude
	\[
	S - S' \preceq_{\mathcal{K},K,\beta,F} \nu_X(U) \cdot \nu_Y(V) \cdot\left(\mathrm{diam}_F(U) + \mathrm{diam}_F(V)\right) + A \cdot L^{-\kappa}. \qedhere
	\]
\end{proof}

We are now ready to prove the main estimate for the local counting function $f(\alpha,\beta,X,Y,U,V,L)$. We will later deduce Theorem \ref{theo:main} as a direct consequence of this estimate.

\begin{theorem}
	\label{theo:count_loc}
	There exists $\kappa_4 = \kappa_4(g) > 0$ such that for every $K \subseteq \mathcal{C}_g^*$ compact, every $\alpha \in K$, every filling closed curve $\beta$ on $S_g$, every $\mathcal{K} \subseteq \mathcal{T}_g$ compact, every $X,Y \in \mathcal{K}$, every set of Dehn-Thurston coordinates $F \colon \mf \to \Sigma^{3g-3}$, every non-empty $F$-simplices $U,V \subseteq \pmf$, and every $L > 0$,
	\begin{gather*}
	f(\alpha,\beta,X,Y,U,V,L) \\= \frac{\nu_\alpha(U) \cdot \nu_\beta(V)}{b_g} \cdot L^{6g-6} + O_{\mathcal{K},K,\beta,F}\left(\nu_X(U) \cdot \nu_Y(V) \cdot (\mathrm{diam}_F(U)+ \mathrm{diam}_F(V)) \cdot L^{6g-6} + L^{6g-6-\kappa_4} \right). 
	\end{gather*}
\end{theorem}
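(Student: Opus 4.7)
The plan is to combine the two main results of the preceding discussion: Proposition \ref{prop:total_DT}, which sandwiches $f(\alpha,\beta,X,Y,U,V,L)$ between $S/b_g \cdot L^{6g-6}$ and $S'/b_g \cdot L^{6g-6}$ up to an $L^{6g-6-\kappa_4}$ error, and Proposition \ref{prop:bound_2}, which shows that both $S$ and $S'$ are close to $\nu_\alpha(U)\nu_\beta(V)$ with an explicit gap. I take $\kappa_4$ to be the exponent of Proposition \ref{prop:total_DT} and let $A, L_2$ be the constants it produces for the given data $\mathcal{K}, K, \beta$.

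For the main range $L > L_2$ (which in particular forces $L > L_3 = (2A)^{1/\kappa_4}$ by the definition of $L_2$ in the proof of Proposition \ref{prop:total_DT}, so Proposition \ref{prop:bound_2} also applies), Proposition \ref{prop:total_DT} yields
\[
\frac{S'}{b_g} L^{6g-6} - O_{\mathcal{K},K,\beta,F}(L^{6g-6-\kappa_4}) \leq f(\alpha,\beta,X,Y,U,V,L) \leq \frac{S}{b_g} L^{6g-6} + O_{\mathcal{K},K,\beta,F}(L^{6g-6-\kappa_4}).
\]
Applying Proposition \ref{prop:bound_2} with $\kappa = \kappa_4$ gives the pincer $S' \leq \nu_\alpha(U)\nu_\beta(V) \leq S$ together with the gap estimate $S - S' \preceq_{\mathcal{K},K,\beta,F} \nu_X(U)\nu_Y(V)(\mathrm{diam}_F(U)+\mathrm{diam}_F(V)) + A L^{-\kappa_4}$. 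Since $0 \leq S - \nu_\alpha(U)\nu_\beta(V) \leq S - S'$ and $0 \leq \nu_\alpha(U)\nu_\beta(V) - S' \leq S - S'$, multiplying through by $L^{6g-6}/b_g$ and absorbing the resulting $A L^{6g-6-\kappa_4}$ contribution into the existing $L^{6g-6-\kappa_4}$ error yields the desired estimate in this range.

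For the remaining range $0 < L \leq L_2$, I observe that $\mcg \cdot \beta$ is discrete in $\mathcal{C}_g$ because $\mathrm{Stab}(\beta)$ is finite and the $\mcg$-action is properly discontinuous off the zero current; together with the compactness of $K \subseteq \mathcal{C}_g^*$, this yields a positive threshold $L_0 = L_0(K,\beta) > 0$ such that $i(\alpha, \mc.\beta) \geq L_0$ for every $\mc \in \mcg$ and every $\alpha \in K$, whence $f(\alpha,\beta,X,Y,U,V,L) = 0$ on $(0, L_0)$. On $(0, L_0)$ the leading term is bounded by $\Lambda_g^2 L^{6g-6}/b_g$, and since $L \leq L_2$ may be assumed to be $\leq 1$ without loss (enlarging the implicit constant otherwise), this is $O(L^{6g-6-\kappa_4})$. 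On $[L_0, L_2]$, both $f$ (bounded by applying the upper bound of Proposition \ref{prop:total_DT} at $L = L_2 + 1$) and the leading term are uniformly bounded in terms of $\mathcal{K}, K, \beta, F$, and the constant bound on their difference is absorbed into $L^{6g-6-\kappa_4}$ by using $L \geq L_0 > 0$ and enlarging the implicit constant accordingly. No genuine obstacle arises; the content of the theorem is entirely captured by Propositions \ref{prop:total_DT} and \ref{prop:bound_2}, and the proof amounts to a clean assembly of those results plus the routine bookkeeping above.
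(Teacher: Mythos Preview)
Your proof is correct and follows essentially the same approach as the paper: apply Proposition~\ref{prop:total_DT} to obtain the two-sided sandwich, then invoke Proposition~\ref{prop:bound_2} with $\kappa=\kappa_4$ to replace $S$ and $S'$ by $\nu_\alpha(U)\nu_\beta(V)$ at the cost of the diameter and $L^{-\kappa_4}$ terms. Your observation that $L_2 \geq (2A)^{1/\kappa_4} = L_3$ (so no separate threshold $L_4=\max\{L_2,L_3\}$ is needed) is correct and slightly sharper than the paper's bookkeeping; your treatment of the range $0<L\le L_2$ is more detailed than the paper's one-line ``increase the implicit constant,'' but amounts to the same routine verification.
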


\begin{proof}
	Let $\kappa_4 = \kappa_4(g) > 0$ be as in Proposition \ref{prop:total_DT}. Fix $K \subseteq \mathcal{C}_g^*$ compact, a filling closed curve $\beta$ on $S_g$, and $\mathcal{K} \subseteq \mathcal{T}_g$ compact. Let $A = A(\mathcal{K},K,\beta) > 0$ and $L_2 =  L_2(\mathcal{K},K,\beta) > 0$ be as in Proposition \ref{prop:total_DT}, $L_3 = L_3(A,\kappa_4) > 0$ be as in Proposition \ref{prop:bound_2}, and $L_4 = L_4(\mathcal{K},K,\beta) := \max\{L_2,L_3\} > 0$. Fix $\alpha \in K$, $X,Y \in \mathcal{T}_g$, a set of Dehn-Thurston coordinates $F \colon \mf \to \Sigma^{3g-3}$, a pair of non-empty $F$-simplices $U,V \subseteq \pmf$, and $L > L_4$. Consider the parameters
	\begin{gather*}
	C:= m(\alpha,X,U) \cdot m(\beta,Y,V) \cdot (1-A \cdot L^{-\kappa_4}),\\
	C':= M(\alpha,X,U) \cdot M(\beta,Y,V) \cdot (1+A \cdot L^{-\kappa_4}).
	\end{gather*}
	Proposition \ref{prop:total_DT} guarantees
	\begin{gather}
	f(\alpha,\beta,X,Y,U,V,L) - \frac{\nu_X(U) \cdot \nu_Y(V)}{C^{6g-6} \cdot b_g} \cdot L^{6g-6} \preceq_{\mathcal{K},K,\beta,F} L^{6g-6-\kappa_4}, \label{eq:l1}\\
	\frac{\nu_X(U) \cdot \nu_Y(V)}{(C')^{6g-6} \cdot b_g} \cdot L^{6g-6}  - f(\alpha,\beta,X,Y,U,V,L) \preceq_{\mathcal{K},K,\beta,F} L^{6g-6-\kappa_4}. \label{eq:l2}
	\end{gather}
	Consider the parameters
	\begin{gather*}
	S := \nu_X(U) \cdot \nu_Y(V) \cdot C^{-(6g-6)},\\
	S' := \nu_X(U) \cdot \nu_Y(V) \cdot (C')^{-(6g-6)}.
	\end{gather*}
	Proposition \ref{prop:bound_2} guarantees
	\begin{gather}
	S' \leq \nu_\alpha(U) \cdot \nu_\beta(V) \leq S, \label{eq:l3}\\
	S - S' \preceq_{\mathcal{K},K,\beta,F} \nu_X(U) \cdot \nu_Y(V) \cdot \left(\mathrm{diam}_F(U) + \mathrm{diam}_F(V)\right) + A \cdot L^{-\kappa_4}. \label{eq:l4}
	\end{gather}
	Putting together (\ref{eq:l1}), (\ref{eq:l2}), (\ref{eq:l3}), and (\ref{eq:l4}) we conclude
	\begin{gather*}
	f(\alpha,\beta,X,Y,U,V,L) \\= \frac{\nu_\alpha(U) \cdot \nu_\beta(V)}{b_g} \cdot L^{6g-6} + O_{\mathcal{K},K,\beta,F}\left(\nu_X(U) \cdot \nu_Y(V) \cdot \left(\mathrm{diam}_F(U)+ \mathrm{diam}_F(V)\right) \cdot L^{6g-6} + L^{6g-6-\kappa_4} \right). 
	\end{gather*}
	The same estimate holds for every $L > 0$ by increasing the implicit constant.
\end{proof}

\subsection*{From local to global counting estimates.} We are now ready to prove Theorem \ref{theo:main}, the main result of this paper. Recall the definitions of the counting functions $c(\alpha,\beta,L)$ and $f(\alpha,\beta,L)$ in (\ref{eq:c}) and (\ref{eq:f}). Recall that for every filling geodesic current $\alpha \in \mathcal{C}_g^*$ we consider the positive constant
\[
c(\alpha) := \nu_{\mathrm{Thu}}\left(\{\lambda \in \mf \ | \ i(\alpha,\lambda) \leq 1\}\right).
\]

We will actually prove the following stronger version of Theorem \ref{theo:main} with a more explicit description of the leading term and a more precise control of the implicit constant.

\begin{theorem}
	\label{theo:main_strong}
	There exists a constant $\kappa = \kappa(g) > 0$ such that for every compact subset $K \subseteq \mathcal{C}_g^*$, every filling geodesic current $\alpha \in K$, every filling closed curve $\beta$ on $S_g$, and every $L > 0$,
	\[
	c(\alpha,\beta,L) = \frac{c(\alpha) \cdot c(\beta)}{|\mathrm{Stab}(\beta)| \cdot b_g} \cdot L^{6g-6} + O_{K,\beta}\left(L^{6g-6-\kappa}\right).
	\]
\end{theorem}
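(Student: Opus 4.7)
The plan is to deduce Theorem \ref{theo:main_strong} from the local counting estimate in Theorem \ref{theo:count_loc} by partitioning $\pmf$ into small $F$-simplices, summing the local contributions, and optimizing the diameter of the partition. By (\ref{eq:count_cf}), it suffices to prove the corresponding estimate for $f(\alpha,\beta,L)$. Fix, once and for all, a basepoint $X_0 \in \mathcal{T}_g$ and a set of Dehn-Thurston coordinates $F \colon \mf \to \smash{\Sigma^{3g-3}}$; take $X = Y = X_0$ when invoking Theorem \ref{theo:count_loc}, so that the compact set $\mathcal{K} = \{X_0\}$ and the system $F$ become absolute and all implicit constants depend only on $g$, $K$, and $\beta$.

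For every $\delta > 0$, subdivide each of the $2^{3g-3}$ facets of $\smash{\Sigma_u^{3g-3}}$ into open affine simplices of dimension $6g-7$ and Euclidean diameter at most $\delta$, producing a decomposition of $\pmf$ into a finite collection of $F$-simplices $\{U_i\}_{i=1}^N$ with $\mathrm{diam}_F(U_i) \leq \delta$ and $N \preceq \delta^{-(6g-7)}$. Since the sets $\{\mc \in \mcg \ | \ \mc.X_0 \in \mathrm{Sect}_{U_i}(X_0)\}$ partition $\mcg$ up to a set of measure zero for the counting measure (the exceptional mapping classes are those for which $[\Re(q_s(X_0,\mc.X_0))]$ lies on the boundary between simplices, a finite set for each $L$ after a generic perturbation of the partition), we may write
\[
f(\alpha,\beta,L) = \sum_{i,j=1}^N f(\alpha,\beta,X_0,X_0,U_i,U_j,L).
\]
Applying Theorem \ref{theo:count_loc} to each summand and summing yields
\[
f(\alpha,\beta,L) = \frac{L^{6g-6}}{b_g}\sum_{i,j} \nu_\alpha(U_i) \nu_\beta(U_j) + O_{K,\beta}\!\left(L^{6g-6}\sum_{i,j}\nu_{X_0}(U_i)\nu_{X_0}(U_j)\,(\mathrm{diam}_F(U_i)+\mathrm{diam}_F(U_j))  + N^2 \cdot L^{6g-6-\kappa_4}\right).
\]
The main term collapses to $b_g^{-1}\, \nu_\alpha(\pmf) \nu_\beta(\pmf) \cdot L^{6g-6} = b_g^{-1}\, c(\alpha) c(\beta) \cdot L^{6g-6}$ by the definitions of $\nu_\alpha$, $\nu_\beta$, $c(\alpha)$, and $c(\beta)$. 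Using $\sum_i \nu_{X_0}(U_i) = \Lambda_g$ (by (\ref{eq:hmc})), the first error term is bounded by $2\Lambda_g^2 \cdot \delta \cdot L^{6g-6}$, and the second is bounded by a constant times $\delta^{-2(6g-7)} L^{6g-6-\kappa_4}$.

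Choosing $\delta := L^{-\kappa_4/(12g-13)}$ balances both error terms at $L^{6g-6-\kappa}$ with $\kappa := \kappa_4/(12g-13) > 0$, depending only on $g$. Dividing by $|\mathrm{Stab}(\beta)|$ yields the claimed estimate for $c(\alpha,\beta,L)$. The main obstacle is the technical bookkeeping around the partition: one must argue that the exceptional boundary simplices contribute negligibly (which follows either by a small perturbation making the boundary disjoint from the relevant countable set of mapping classes, or by noting that the lower-dimensional facets of a simplicial decomposition are themselves measure-zero loci to which Theorem \ref{theo:count_loc} still applies with vanishing main term and an error of the same shape), and that the implicit constants from Theorem \ref{theo:count_loc} remain uniform over the family of $F$-simplices produced by the subdivision, which is immediate since the dependence on $F$ in that theorem is on the coordinate system, not on the particular simplex.
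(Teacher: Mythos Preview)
Your proposal is correct and follows essentially the same approach as the paper: fix auxiliary Teichmüller data and Dehn--Thurston coordinates, partition $\pmf$ into $F$-simplices of diameter $\leq \delta$, sum the local estimates from Theorem~\ref{theo:count_loc}, and optimize $\delta$ as a power of $L$. One small simplification: the paper's definition of an $F$-simplex already allows $W \subseteq u \circ PF(U) \subseteq \overline{W}$, so a genuine measurable partition of $\pmf$ into $F$-simplices exists by assigning boundary faces to one side, and your perturbation/boundary discussion is unnecessary.
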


\begin{proof}
	Let $\kappa_4 = \kappa_4(g) > 0$ be as in Theorem \ref{theo:count_loc}. For the rest of this proof we fix an auxiliary pair of points $X,Y \in \mathcal{T}_g$ and an auxiliary set of Dehn-Thurston coordinates $F \colon \mf \to \Sigma^{3g-3}$ depending only on $g$. Let $K \subseteq \mathcal{C}_g^*$ compact. Fix $\alpha \in K$, a filling closed curve $\beta$ on $S_g$, and $L > 1$. Let $\delta = \delta(L) \in (0,1)$ to be fixed later. Consider a partition $\smash{\{U_i\}_{i=1}^k}$ of $\pmf$ into $k \preceq_g \smash{\delta^{-(6g-7)}}$ $F$-simplices of diameter $\mathrm{diam}_F(U_i) \leq \delta$. Theorem \ref{theo:count_loc} guarantees that, for every $i,j \in \{1,\dots,k\}$, 
	\begin{gather}
	f(\alpha,\beta,X,Y,U_i,U_j,L) \label{eq:g1} \\= \frac{\nu_\alpha(U_i) \cdot \nu_\beta(U_j)}{b_g} \cdot L^{6g-6} + O_{K,\beta}\left(\nu_X(U_i) \cdot \nu_Y(U_j) \cdot \delta \cdot L^{6g-6} + L^{6g-6-\kappa_4} \right). \nonumber
	\end{gather}
	Recall the definition of the constant $\Lambda_g > 0$ in (\ref{eq:hmc}). Notice that
	\begin{align}
	\hspace{+3.2cm} &\sum_{i=1}^k \nu_\alpha(U_i)  = c(\alpha), & &\sum_{j=1}^k \nu_\beta(U_j)  = c(\beta), \hspace{+4.2cm} \label{eq:g3}\\
	&\sum_{i=1}^k \nu_X(U_i)  = \Lambda_g, & &\sum_{j=1}^k \nu_Y(U_j)  = \Lambda_g. \label{eq:g4}
	\end{align}
	Adding (\ref{eq:g1}) over all $i,j \in \{1,\dots,k\}$, using (\ref{eq:g3}) and (\ref{eq:g4}), and recalling $k \preceq_g \delta^{-(6g-7)}$, we deduce
	\begin{equation}
	\label{eq:g5}
	f(\alpha,\beta,L) = \frac{c(\alpha) \cdot c(\beta)}{b_g} \cdot L^{6g-6} + O_{K,\beta}\left(\delta \cdot L^{6g-6} + \delta^{-(12g-14)} \cdot L^{6g-6-\kappa_4}\right).
	\end{equation}
	Let $\delta = L^{-\eta}$ with $\eta = \eta(g) > 0$ small enough so that $(12g-14) \eta < \kappa_4$. Consider $\kappa = \kappa(g) := \min\{\eta,\kappa_4 -(12g-14) \eta\} > 0$. It follows from (\ref{eq:g5}) that 
	\[
	f(\alpha,\beta,L) = \frac{c(\alpha) \cdot c(\beta)}{b_g} \cdot L^{6g-6} + O_{K,\beta}\left(L^{6g-6-\kappa}\right). 
	\]	
	This estimate together with the identity in (\ref{eq:count_cf}) imply that, for every $L > 1$,
	\[
	c(\alpha,\beta,L) = \frac{c(\alpha) \cdot c(\beta)}{|\mathrm{Stab}(\beta)| \cdot b_g} \cdot L^{6g-6} + O_{K,\beta}\left(L^{6g-6-\kappa}\right). 
	\]	
	The same estimate holds for every $L > 0$ by increasing the implicit constant.
\end{proof}

\subsection*{Counting filling closed multi-curves on surfaces.}  We finish this section by stating a stronger version of Theorem \ref{theo:main_strong} that applies to counting functions of non-connected filling closed multi-curves. 

A closed multi-curve $\gamma := (\gamma_i)_{i=1}^k$ on $S_g$ is an ordered tuple of closed curves on $S_g$. The mapping class group $\mcg$ acts naturally on multi-curves of $S_g$ by acting on each of their individual components. Given a geodesic current $\alpha \in \mathcal{C}_g$ and a closed multi-curve $\gamma := (\gamma_i)_{i=1}^k$ on $S_g$ consider the vector $i(\alpha,\gamma) := (i(\alpha, \gamma_i))_{i=1}^k \in \smash{(\mathbf{R}_{\geq 0})^k}$. Let $\alpha \in \mathcal{C}_g^*$ be a filling geodesic current on $S_g$ and $\beta := (\beta_i)_{i=1}^k$ be a closed multi-curve on $S_g$. Consider a homogeneous, Lipschitz function $F \colon \smash{(\mathbf{R}_{\geq 0})^k} \to \mathbf{R}_{\geq 0}$ that is positive away from the origin. Relevant examples include
\begin{gather*}
F(x_1,\dots,x_k) := x_1 + \dots + x_k,\\
F(x_1,\dots,x_k) := \max_{i=1,\dots,k} x_i.
\end{gather*}
 For every $L > 0$ consider the counting function 
\[
c(\alpha,\beta,F,L) := \#\{\gamma \in \mcg \cdot \beta \ | \ F(i(\alpha,\gamma)) \leq L\}.
\]

 A closed multi-curve $\beta := (\beta_i)_{i=1}^k$ on $S_g$ is said to be filling if every homotopically non-trivial closed curve on $S_g$ intersects at least one component of $\beta$.  If $\beta$ is a filling closed multi-curve on $S_g$ then its stabilizer $\mathrm{Stab}(\beta) \subseteq \mcg$ is finite. The tracking method introduced in the proof of Theorem \ref{theo:main} can also be used to prove a quantitative estimate with a power saving error term for the counting function $c(\alpha,\beta,F,L)$ under the assumption that the closed multi-curve $\beta$ is filling.

To give a precise statement of this estimate let us first introduced some notation. Let $k \in \mathbf{N}$ and $F \colon \smash{(\mathbf{R}_{\geq 0})^k} \to \mathbf{R}_{\geq 0}$ be a homogeneous, Lipschitz function. Denote by $\mathrm{Lip}(F) > 0$ the Lipschitz constant of $F$. Denote by $\mathbf{S}^{k-1} \subseteq \mathbf{R}^k$ the unit sphere centered at the origin with respect to the standard Euclidean metric. Consider the constants
\begin{gather*}
m(F) := \min_{x \in \mathbf{S}^{k-1} \cap \smash{(\mathbf{R}_{\geq 0})^k}} F(x),\\
M(F) := \max_{x \in \mathbf{S}^{k-1} \cap \smash{(\mathbf{R}_{\geq 0})^k}} F(x).
\end{gather*}
Notice that the condition $m(F) > 0$ is equivalent to $F \colon \smash{(\mathbf{R}_{\geq 0})^k} \to \mathbf{R}_{\geq 0}$ being positive away from the origin. Given a filling close multi-curve $\beta := (\beta_i)_{i=1}^k$ on $S_g$ consider the positive constant
\[
c(\beta,F) := \nu_{\mathrm{Thu}}\left(\{\lambda \in \mf \ | \ F(i(\lambda,\beta)) \leq 1\}\right)
\]

Minor modifications to the proof of Theorem \ref{theo:main_strong} yield a proof of the following effective estimate for the counting function $c(\alpha,\beta,F,L)$ under the assumption that the closed multi-curve $\beta$ is filling.

\begin{theorem}
	\label{theo:main_multi}
	There exists a constant $\kappa = \kappa(g) > 0$ such that for every compact subset $K \subseteq \mathcal{C}_g^*$, every $\alpha \in K$, every filling closed curve $\beta$ on $S_g$, every $a,b,c > 0$, every homogeneous, Lipschitz function $F \colon \smash{(\mathbf{R}_{\geq 0})^k} \to \mathbf{R}_{\geq 0}$  such that $\mathrm{Lip}(F) < a$, $m(F) > b$, and $M(F) < c$, and every $L > 0$,
	\[
	c(\alpha,\beta,F,L) = \frac{c(\alpha) \cdot c(\beta,F)}{|\mathrm{Stab}(\beta)| \cdot b_g} \cdot L^{6g-6} + O_{K,\beta,a,b,c}\left(L^{6g-6-\kappa}\right).
	\]
\end{theorem}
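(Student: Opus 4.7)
The plan is to adapt the proof of Theorem \ref{theo:main_strong} to the multi-curve setting, with the homogeneous Lipschitz function $F$ replacing the identity. Since $\beta$ is filling, its stabilizer in $\mcg$ is finite, so it suffices to estimate $f(\alpha,\beta,F,L) := |\mathrm{Stab}(\beta)| \cdot c(\alpha,\beta,F,L)$, which counts mapping classes $\mc \in \mcg$ with $F(i(\alpha,\mc.\beta)) \leq L$. The core step is a multi-curve tracking principle: for $\mc$ outside a sparse subset of $\mcg$ (the union of the sparse sets produced by Theorem \ref{theo:track} applied to each component $\beta_j$, which is still sparse), applying Theorem \ref{theo:track} componentwise and combining the $k$ estimates via the homogeneity and Lipschitz-ness of $F$ yields
\[
F(i(\alpha,\mc.\beta)) = i(\alpha,\Re(q_s)) \cdot F(i(\mc.\beta,\Im(q_e))) \cdot e^r + O_{\mathcal{K},K,\beta,a}\!\left(e^{(1-\kappa)r}\right).
\]
To convert this additive error into a multiplicative one I would use that $\beta$ being filling gives $\sum_j i(\beta_j,\Im(\mc^{-1}.q_e)) \succeq_{\mathcal{K},\beta} 1$ uniformly, and then the bound $F(x) \geq m(F) \cdot \|x\|_1/\sqrt{k} \geq b \cdot \|x\|_1/\sqrt{k}$ supplies a uniform positive lower bound on $F(i(\mc.\beta,\Im(q_e)))$. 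The same argument provides the coarse estimate $e^r \preceq_{\mathcal{K},K,\beta,b} F(i(\alpha,\mc.\beta))$ playing the role of Proposition \ref{prop:coarse}.

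With these two ingredients, the derivations of Propositions \ref{prop:up_2}--\ref{prop:total_DT} transfer almost verbatim, providing effective upper and lower bounds for the local counting function
\[
f(\alpha,\beta,F,X,Y,U,V,L) := \#\{\mc \in \mcg \mid F(i(\alpha,\mc.\beta)) \leq L,\ \mc.Y \in \mathrm{Sect}_U(X),\ \mc^{-1}.X \in \mathrm{Sect}_V(Y)\}
\]
in terms of $N(X,Y,U,V,R)$, where now $R$ involves $\log L$ minus the logarithm of $m(\alpha,X,U) \cdot m(\beta,F,Y,V)$ with $m(\beta,F,Y,V) := \inf_{[\lambda] \in V} F(i(\beta,E_Y([\lambda])))$ and $M(\beta,F,Y,V)$ defined analogously. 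Invoking Theorem \ref{theo:bisect_count} and controlling the discrepancy between the leading terms as in Propositions \ref{prop:bound_1} and \ref{prop:bound_2} requires the function $[\lambda] \mapsto F(i(\beta,E_Y([\lambda])))$ to be Lipschitz in Dehn-Thurston coordinates, which follows by combining Theorems \ref{theo:curr_lip} and \ref{theo:ext_lip_2} applied componentwise with the assumption $\mathrm{Lip}(F) < a$.

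As in the proof of Theorem \ref{theo:main_strong}, I would then partition $\pmf$ into $O_g(\delta^{-(6g-7)})$ $F$-simplices of diameter at most $\delta = L^{-\eta}$, sum the local estimates, and use the identities $\sum_i \nu_\alpha(U_i) = c(\alpha)$, $\sum_j \nu_{\beta,F}(U_j) = c(\beta,F)$, and $\sum_i \nu_X(U_i) = \sum_j \nu_Y(U_j) = \Lambda_g$ (where $\nu_{\beta,F}(U) := \nu_{\mathrm{Thu}}(\{\lambda \in \mf \mid F(i(\beta,\lambda)) \leq 1,\ [\lambda] \in U\})$) to assemble the leading term; choosing $\eta = \eta(g) > 0$ sufficiently small then produces the power saving error term. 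The main obstacle I expect is ensuring that all implicit constants depend only on $\mathcal{K},K,\beta,a,b,c$ and not on finer features of $F$; this amounts to carefully tracking where $\mathrm{Lip}(F)$, $m(F)$, and $M(F)$ enter each of the adapted estimates, most delicately in the passage from the additive tracking identity to its multiplicative form and in the Lipschitz control of $F(i(\beta,E_Y(\cdot)))$.
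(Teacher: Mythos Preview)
Your proposal is correct and matches the paper's approach: the paper does not give a detailed proof of this theorem but simply states that ``minor modifications to the proof of Theorem~\ref{theo:main_strong} yield a proof,'' and the modifications you outline---applying the tracking principle of Theorem~\ref{theo:track} to each component $\beta_j$, pulling out the common factor $i(\alpha,\Re(q_s))\cdot e^r$ via the homogeneity of $F$, controlling the error via $\mathrm{Lip}(F)<a$, and using $m(F)>b$ together with the filling assumption on $\beta$ to obtain the multiplicative form and the coarse estimate---are precisely the intended ones. Your identification of where each of the parameters $a,b,c$ enters (Lipschitz control for the error and for the regularity of $F(i(\beta,E_Y(\cdot)))$, $m(F)$ for the lower bound replacing the filling argument in Theorem~\ref{theo:track_mult} and Proposition~\ref{prop:coarse}, and $M(F)$ for the upper bounds in the analogue of Proposition~\ref{prop:total_DT}) is accurate.
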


\section{Counting with respect to Thurston's asymmetric metric}

\subsection*{Outline of this section.} In this section we explain how to adapt the tracking method introduced in the proof of Theorem \ref{theo:main} to prove the effective estimate in Theorem \ref{theo:main_3} for the number of points in a mapping class group orbit of Teichmüller space that lie within a Thurston asymmetric metric ball of given center and large radius. We first state the tracking principle for Thurston's asymmetric metric proved in the prequel \cite{Ara20c}. See Theorem \ref{theo:thurston_track}. We then discuss the regularity of certain functions of interest on the space of singular measured foliations. See Theorems \ref{theo:d_convex} and \ref{theo:d_lip}. We end this section by stating and briefly discussing the proof of a more precise version of Theorem \ref{theo:main_3}. See Theorem \ref{theo:main_3_strong}.

\subsection*{The tracking principle for Thurston's asymmetric metric.} For the rest of this section we fix an integer $g \geq 2$ and a connected, oriented, closed surface $S_g$ of genus $g$. Recall that $\mathcal{T}_g$ denotes the Teichmüller space of marked complex structures on $S_g$, or, equivalently, the Teichmüller space of marked hyperbolic structure on $S_g$. Recall that $d_\mathcal{T}$ denotes the Teichmüller metric on $\mathcal{T}_g$ and that $d_\mathrm{Thu}$ denotes Thurston's asymmetric metric on $\mathcal{T}_g$. Recall that the mapping class group $\mcg$ acts naturally on $\mathcal{T}_g$ by changing the markings. Recall that this action preserves $d_\mathcal{T}$ and $d_\mathrm{Thu}$. 

We now state the tracking principle for Thurston's asymmetric metric proved in the prequel \cite{Ara20c}. Let us first review some relevant notation. Recall that $\qut$ denotes the Teichmüller space of marked unit area quadratic differentials on $S_g$ and that $\pi \colon \qut \to \tt$ denotes the natural projection to $\mathcal{T}_g$. Recall that $\mf$ denotes the space of singular measured foliations on $S_g$ and that $\Re(q), \Im(q) \in \mf$ denote the vertical and horizontal foliations of $q \in \qut$. Recall that $\ell_{\lambda}(X)$ denotes the length of a singular measured foliation $\lambda \in \mf$ with respect to a marked hyperbolic structure $X \in \mathcal{T}_g$.

Recall that $\Delta \subseteq \mathcal{T}_g \times \mathcal{T}_g$ denotes the diagonal of $\mathcal{T}_g \times \mathcal{T}_g$, that $S(X) := \pi^{-1}(X)$ for every $X \in \mathcal{T}_g$, and that $q_s, q_e \colon \mathcal{T}_g \times \mathcal{T}_g - \Delta \to \mathcal{Q}^1\mathcal{T}_g$ denote the maps which to every pair of distinct points $X \neq Y \in \mathcal{T}_g$ assign the quadratic differentials $q_s(X,Y) \in S(X)$ and $q_e(X,Y) \in S(Y)$ corresponding to the cotangent directions at $X$ and $Y$ of the unique Teichmüller geodesic segment from $X$ to $Y$.

Recall that $\mathcal{C}_g$ denotes the space of geodesic currents on $S_g$. Recall that $i(\cdot,\cdot)$ denotes the geometric intersection number pairing on $\mathcal{C}_g$. Recall that $\ell_\beta(X)$ denotes the length of the unique geodesic representative of a free homotopy class  $\beta$ of simple closed curve on $S_g$ with respect to a marked hyperbolic structure $X \in \mathcal{T}_g$. Denote by $\mathcal{S}_g$ be the set of all free homotopy classes of simple closed curves on $S_g$. For every marked hyperbolic structure $X \in \mathcal{T}_g$ denote by $D_X \colon \mf \to \smash{\mathbf{R}}$ the homogeneous function which to every singular measured foliation $\lambda \in \mf$ assigns the value
\begin{equation}
\label{eq:dx}
D_X(\lambda) := \sup_{\beta \in \mathcal{S}_g}\left( \frac{i(\beta,\lambda)}{\ell_\beta(X)}\right).
\end{equation}

Let us highlight an important formula for Thurston's asymmetric metric. By work of Thurston \cite{Thu98}, for every pair of marked hyperbolic structures $X,Y \in \mathcal{T}_g$,
\begin{equation}
\label{eq:Thu}
d_\mathrm{Thu}(X,Y) = \log\left(\sup_{\beta \in \mathcal{S}_g} \frac{\ell_{\beta}(Y)}{\ell_{\beta}(X)}\right).
\end{equation}

In the prequel \cite{Ara20c}, the following tracking principle for Thurston's asymmetric metric is deduced as a consequence of the formula in (\ref{eq:Thu}) and the tracking principle in Theorem \ref{theo:track}. A multiplicative version of this result analogous to Theorem \ref{theo:track_mult} can be proved using similar arguments.

\begin{theorem} \cite[Theorem 9.7]{Ara20c}
	\label{theo:thurston_track}
	There exists a constant $\kappa = \kappa(g) > 0$ such that for every compact subset $\mathcal{K} \subseteq \mathcal{T}_g$ there exists a constant $C  = C(\mathcal{K}) > 0$ with the following property. For every $X,Y \in \mathcal{K}$, there exists an $(X,Y,C,\kappa)$-sparse subset of mapping classes $M = M(X,Y) \subseteq \mcg$ such that for every $\mc \in \mcg \setminus M$, if $r := d_\mathcal{T}(X,\mc.Y)$, $q_s := q_s(X,\mc.Y)$, and $q_e := q_e(X,\mc.Y)$, then
	\[
	d_\mathrm{Thu}(X,\mc.Y) = d_\mathcal{T}(X,\mc.Y) + \log D_X(\Re(q_s)) + \log \ell_{\Im(q_e)}(\mc.Y) + O_\mathcal{K}\left(e^{-\kappa r}\right).
	\]
\end{theorem}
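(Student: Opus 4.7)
The plan is to combine Thurston's length formula (\ref{eq:Thu}) with the tracking principle of Theorem \ref{theo:track} applied to the Liouville current of $Y$. The first step is to write, for each simple closed curve $\beta \in \mathcal{S}_g$,
\[
\ell_\beta(\mc.Y) \;=\; \ell_{\mc^{-1}.\beta}(Y) \;=\; i(Y, \mc^{-1}.\beta),
\]
where $Y$ is viewed as its own Liouville current in $\mathcal{C}_g$. I then apply Theorem \ref{theo:track} to the mapping class $\mc^{-1}$ with basepoints $(Y, X)$, current $\alpha = Y$, and base curve $\beta$. Because the reverse of a Teichmüller geodesic is generated by negation of the quadratic differential, and negation swaps $\Re$ and $\Im$, a short computation yields $\Re(q_s(Y, \mc^{-1}.X)) = \mc^{-1}.\Im(q_e)$ and $\Im(q_e(Y, \mc^{-1}.X)) = \mc^{-1}.\Re(q_s)$. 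Pushing the $\mc^{-1}$ through the pairing by $\mcg$-invariance produces, for $\mc$ outside a sparse subset a priori depending on $\beta$,
\[
\ell_\beta(\mc.Y) \;=\; \ell_{\Im(q_e)}(\mc.Y) \cdot i(\beta, \Re(q_s)) \cdot e^r + O_\mathcal{K}\bigl(e^{(1-\kappa)r}\bigr).
\]

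Dividing by $\ell_\beta(X)$, which is bounded below uniformly by the systole of $X \in \mathcal{K}$ on simple closed curves, and taking the supremum over $\beta \in \mathcal{S}_g$ transforms this into
\[
e^{d_\mathrm{Thu}(X, \mc.Y)} \;=\; e^r \cdot \ell_{\Im(q_e)}(\mc.Y) \cdot D_X(\Re(q_s)) + O_\mathcal{K}\bigl(e^{(1-\kappa)r}\bigr),
\]
by Thurston's formula (\ref{eq:Thu}) and the definition of $D_X$ in (\ref{eq:dx}). The filling property of the Liouville current $X$ combined with compactness of $\pmf$ and $\mathcal{K}$ forces both $D_X(\Re(q_s))$ and $\ell_{\Im(q_e)}(\mc.Y)$ to be bounded below uniformly (and Proposition \ref{prop:coarse} gives a consistent overall lower bound on the product), so the main term is $\succeq_\mathcal{K} e^r$. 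The relative multiplicative error is thus $O_\mathcal{K}(e^{-\kappa r})$, and taking logarithms via $\log(1+x) = x + O(x^2)$ yields the additive estimate in the statement.

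The main obstacle is that Theorem \ref{theo:track} produces a sparse set $M(X, Y, \beta)$ depending on the curve $\beta$, while Thurston's supremum ranges over all of $\mathcal{S}_g$ simultaneously; a curve-independent sparse set $M(X, Y)$ cannot be obtained by naively unioning over all $\beta$. To resolve this I would exploit the density of projective simple closed curves in the compact space $\pmf$ together with the continuity in Dehn-Thurston coordinates of $[\lambda] \mapsto i(Y, \mc^{-1}.\lambda)/\ell_\lambda(X)$ (controlled via the Lipschitz estimates of Theorems \ref{theo:curr_lip} and \ref{theo:ext_lip_2}) to reduce the supremum to a maximum over a finite family $\{\beta_1, \ldots, \beta_N\}$ chosen so that the approximation error falls below the target $O(e^{-\kappa r})$. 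The union $\bigcup_{i=1}^N M(X, Y, \beta_i)$ then provides the required $(X, Y, C, \kappa)$-sparse set after absorbing $N$ into the exponential decay rate at the cost of a slightly smaller $\kappa$.
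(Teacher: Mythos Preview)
The paper does not actually prove this theorem; it is quoted from the prequel \cite{Ara20c} with only the one-line indication that it is ``deduced as a consequence of the formula in (\ref{eq:Thu}) and the tracking principle in Theorem \ref{theo:track}.'' Your overall strategy---rewrite $\ell_\beta(\mc.Y)$ as $i(Y,\mc^{-1}.\beta)$, invoke Theorem \ref{theo:track} with the Liouville current of $Y$ as $\alpha$, untangle the foliations via $q_s(Y,\mc^{-1}.X)=-\mc^{-1}.q_e$, divide by $\ell_\beta(X)$, and take the supremum---is exactly the route the paper signals, and your computations with $\Re$, $\Im$ and the $\mcg$-equivariance are correct.

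The part that is not yet a proof is the resolution of the $\beta$-dependence of the sparse set. Your plan is to replace $\sup_{\beta\in\mathcal S_g}$ by $\max_{1\le i\le N}$ over a finite family with $N$ growing like a small power of $e^r$, and then absorb $N$ into the exponent. Two quantities have to be controlled simultaneously for this to go through, and you only address one of them. First, the sparsity constant in Theorem \ref{theo:track} is $C=C(\mathcal{K},\beta)$: when you union $M(Y,X,\beta_1),\dots,M(Y,X,\beta_N)$ you get an $(Y,X,\sum_i C(\mathcal{K},\beta_i),\kappa)$-sparse set, and nothing in the statements available in this paper bounds $C(\mathcal{K},\beta_i)$ uniformly as the family is refined. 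Second, your Lipschitz argument bounds the approximation error for $[\lambda]\mapsto i(\lambda,\Re(q_s))/\ell_\lambda(X)$ on $\pmf$ (which indeed has Lipschitz constant depending only on $\mathcal{K}$ and $F$), but you also need $\max_i \ell_{\beta_i}(\mc.Y)/\ell_{\beta_i}(X)$ to approximate the full supremum $e^{d_{\mathrm{Thu}}(X,\mc.Y)}$, and the relevant Lipschitz constant there involves the Liouville current of $\mc.Y$, which does not stay in a fixed compact set. In short, the ``absorb $N$ into $\kappa$'' step needs a uniform bound on $C(\mathcal{K},\beta)$ over the chosen family and a comparison of the two suprema that does not go through $\mc.Y$ directly; both of these presumably come from the finer statements in \cite{Ara20c} rather than from Theorem \ref{theo:track} as quoted here.
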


\subsection*{Regularity of the $\boldsymbol{D_X}$ functions.} As in the case of geometric intersection numbers and extremal lengths, the functions $D_X \colon \mf \to \smash{\mathbf{R}}$ introduced in (\ref{eq:dx}) satisfy convenient regularity properties. 

Let $\tau$ be a maximal train track on $S_g$. Recall that the cone $U(\tau) \subseteq \mf$ of singular measured foliations on $S_g$ carried by $\tau$ can be identified with the cone $V(\tau) \subseteq \smash{(\mathbf{R}_{\geq0})^{18g-18}}$ of non-negative counting measures on the edges of $\tau$ satisfying the switch conditions through a natural $\mathbf{R}^+$-equivariant bijection $\Phi_\tau \colon U(\tau) \to V(\tau)$ we refer to as the train track chart induced by $\tau$ on $\mf$. Directly from Theorem \ref{theo:curr_lip} we deduce that the functions $D_X \colon \mf \to \smash{\mathbf{R}}$ are convex in train track coordinates.

\begin{theorem}
	\label{theo:d_convex}
	Let $\tau$ be a maximal train track on $S_g$, $\Phi_\tau \colon U(\tau) \to V(\tau)$ be the  train track chart induced by $\tau$ on $\mf$, and $X \in \mathcal{T}_g$. Then, the composition $D_X \circ \Phi_\tau^{-1} \colon V(\tau) \to \mathbf{R}$ is convex.
\end{theorem}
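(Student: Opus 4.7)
The plan is to exploit the definition of $D_X$ as a supremum and reduce to Theorem \ref{theo:current_convex}. Fix a marked hyperbolic structure $X \in \mathcal{T}_g$ and a maximal train track $\tau$ on $S_g$. By definition, for every $\lambda \in U(\tau)$,
\[
D_X(\lambda) = \sup_{\beta \in \mathcal{S}_g} \frac{i(\beta,\lambda)}{\ell_\beta(X)},
\]
so $D_X \circ \Phi_\tau^{-1}$ can be written on $V(\tau)$ as the pointwise supremum of the family of functions $\{g_\beta\}_{\beta \in \mathcal{S}_g}$ defined by $g_\beta := \ell_\beta(X)^{-1} \cdot (i(\beta,\cdot) \circ \Phi_\tau^{-1})$.

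First I would verify that each $g_\beta$ is convex. Every simple closed curve $\beta \in \mathcal{S}_g$ embeds into $\mathcal{C}_g$ as a geodesic current, so by Theorem \ref{theo:current_convex} the function $i(\beta,\cdot) \circ \Phi_\tau^{-1} \colon V(\tau) \to \mathbf{R}$ is convex. Dividing by the positive constant $\ell_\beta(X) > 0$ preserves convexity, hence each $g_\beta \colon V(\tau) \to \mathbf{R}$ is convex.

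Next I would invoke the elementary fact that the pointwise supremum of an arbitrary family of convex functions on a convex subset of $\mathbf{R}^n$ is convex, provided the supremum is everywhere finite. The finiteness of $D_X$ on $\mf$ is standard: for instance, $D_X(\lambda)$ admits an equivalent description as the length $\ell_\lambda(X)$ viewed through Thurston's duality (see the identity (\ref{eq:Thu})), which is finite for all $\lambda \in \mf$ and all $X \in \mathcal{T}_g$. Combining these observations, $D_X \circ \Phi_\tau^{-1}$ is convex on $V(\tau)$, as claimed.

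I do not anticipate a genuine obstacle here; the content of the theorem lies entirely in Mirzakhani's convexity theorem for geometric intersection numbers in train track coordinates, which is the result quoted as Theorem \ref{theo:current_convex}. The only mild subtlety is ensuring pointwise finiteness of the supremum so that the standard convexity-under-suprema principle applies; this is handled by the duality formula identifying $\sup_\beta i(\beta,\lambda)/\ell_\beta(X)$ with the hyperbolic length of $\lambda$ with respect to $X$.
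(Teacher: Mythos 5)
Your proof is correct and follows essentially the same route as the paper: both arguments reduce the convexity of $D_X \circ \Phi_\tau^{-1}$ to Mirzakhani's Theorem \ref{theo:current_convex} applied to each simple closed curve $\beta$ regarded as a geodesic current, divide by the positive constant $\ell_\beta(X)$, and pass to the supremum. The paper phrases the last step via the equivalence between convexity and subadditivity for homogeneous functions (deriving subadditivity of each $i(\beta,\cdot) \circ \Phi_\tau^{-1}$ from Theorem \ref{theo:current_convex} and summing), whereas you invoke the standard fact that a pointwise supremum of convex functions is convex; these are the same argument in slightly different clothing. One small caution: the justification you give for finiteness of the supremum is not right. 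The identity (\ref{eq:Thu}) is Thurston's formula for $d_\mathrm{Thu}(X,Y)$ between two points of $\mathcal{T}_g$ and does not yield $D_X(\lambda) = \ell_\lambda(X)$; in general these quantities are distinct. Finiteness of $D_X(\lambda)$ is seen more directly: $\beta \mapsto i(\beta,\lambda)$ and $\beta \mapsto \ell_\beta(X)$ both extend to continuous, positively homogeneous functions on $\mf$, the latter nowhere vanishing away from the zero foliation, so the ratio descends to a continuous function on the compact space $\pmf$ and is therefore bounded.
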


\begin{proof}
	As the composition $D_X \circ \Phi_\tau^{-1} \colon V(\tau) \to \mathbf{R}$ is homogeneous, to show it is convex, it is enough to verify that for every pair of counting measures $u,v \in V(\tau)$,
	\begin{equation}
	\label{eq:ZZ1}
	D_X \circ \Phi_\tau^{-1}(u+v) \leq D_X \circ \Phi_\tau^{-1} (u) + D_X \circ \Phi_\tau^{-1} (v).
	\end{equation}
	Let $u,v \in V(\tau)$ be arbitrary. Theorem \ref{theo:curr_lip} guarantees that, for every $\beta \in \mathcal{S}_g$,
	\[
	i(\beta, \cdot ) \circ \Phi_\tau^{-1}(u+v) \leq i(\beta, \cdot) \circ \Phi_\tau^{-1} (u)  + i(\beta, \cdot) \circ \Phi_\tau^{-1} (v).
	\]
	Dividing this inequality by $\ell_{\beta}(X) > 0$ we deduce that, for every $\beta \in \mathcal{S}_g$,
	\[
	\frac{i(\beta, \cdot )}{\ell_{\beta}(X)} \circ \Phi_\tau^{-1}(u+v) \leq \frac{i(\beta, \cdot)}{\ell_{\beta}(X)} \circ \Phi_\tau^{-1} (u)  + \frac{i(\beta, \cdot)}{\ell_\beta(X)} \circ \Phi_\tau^{-1} (v).
	\]
	Taking supremum over all $\beta \in \mathcal{S}_g$ we conclude (\ref{eq:ZZ1}) holds, i.e.,
	\[
	D_X \circ \Phi_\tau^{-1}(u+v) \leq D_X \circ \Phi_\tau^{-1} (u) + D_X \circ \Phi_\tau^{-1} (v). \qedhere
	\]
\end{proof}

Recall that $\Sigma$ denotes the piecewise linear manifold $\Sigma := \mathbf{R}^2 / \langle-1\rangle$ endowed with the quotient Euclidean metric. Recall that the product $\IT^{3g-3}$ is a piecewise linear manifold which we endow with the $L^2$ product metric. Recall that any set of Dehn-Thurston coodinates provides a homeomorphism $F \colon \mf \to \IT^{3g-3}$ equivariant with respect to the natural $\mathbf{R}^+$ scaling actions on $\mf$ and $\IT^{3g-3}$. The arguments introduced in the proof of Theorem \ref{theo:curr_lip} can also be used to study the functions $D_X \colon \mf \to \smash{\mathbf{R}}$ by applying Theorem \ref{theo:d_convex} in place of Theorem \ref{theo:current_convex}. More concretely, one can show that the functions $D_X \colon \mf \to \smash{\mathbf{R}}$ are Lipschitz in Dehn-Thurston coordinates.

\begin{theorem}
	\label{theo:d_lip}
	Let $F \colon \mf \to \IT^{3g-3}$ be a set of Dehn-Thurston coordinates of $\mf$ and $\mathcal{K} \subseteq \mathcal{T}_g$ be a compact subset of marked hyperbolic structures on $S_g$. Then, there exists a constant $L = L(F,\mathcal{K}) > 0$ such that for every $X \in \mathcal{K}$ the composition $D_X\circ F^{-1} \colon \IT^{3g-3} \to \mathbf{R}$ is $L$-Lipschitz.
\end{theorem}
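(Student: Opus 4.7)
The plan is to mirror the proof of Theorem \ref{theo:curr_lip} almost verbatim, substituting Theorem \ref{theo:d_convex} for Theorem \ref{theo:current_convex} throughout. I would reuse the notation $d_F, \ell_F$ for the metric on $\mf$ induced by Dehn-Thurston coordinates and its associated path length functional, and $d_\tau, \ell_\tau$ for their train track counterparts, as in that proof. Using projective compactness of $\mf$ and the fact that every singular measured foliation lies in the interior of the cone carried by some maximal train track, I would produce finite collections $\{\tau_i\}_{i=1}^n$ of maximal train tracks on $S_g$ and $\{W_i\}_{i=1}^n$ of open subsets of $\mf$ such that each $\overline{W_i}$ is compactly contained in the interior of $U(\tau_i)$ and $\mf \subseteq \bigcup_{i=1}^n W_i$.

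Given $\lambda_0, \lambda_1 \in \mf$, I would connect them by a piecewise smooth path $\gamma \colon [0,1] \to \mf$ with $\ell_F(\gamma) \leq 2 d_F(\lambda_0,\lambda_1)$, apply the Lebesgue number lemma to partition $[0,1]$ into subintervals $[t_j,t_{j+1}]$ with $\gamma([t_j,t_{j+1}]) \subseteq W_{i(j)}$, and exploit the piecewise linearity of the change of coordinates between Dehn-Thurston and train track coordinates to obtain $\ell_{\tau_{i(j)}}(\gamma|_{[t_j,t_{j+1}]}) \asymp_F \ell_F(\gamma|_{[t_j,t_{j+1}]})$. On each segment, Theorem \ref{theo:d_convex} combined with Proposition \ref{prop:convex_lip_2} would deliver
\begin{equation*}
|D_X(\gamma(t_j)) - D_X(\gamma(t_{j+1}))| \preceq_{\mathcal{K},F} d_{\tau_{i(j)}}(\gamma(t_j),\gamma(t_{j+1})),
\end{equation*}
and telescoping via the triangle inequality would yield $|D_X(\lambda_0) - D_X(\lambda_1)| \preceq_{\mathcal{K},F} d_F(\lambda_0,\lambda_1)$.

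The main obstacle will be securing the uniformity of the Lipschitz constant from Proposition \ref{prop:convex_lip_2} as $X$ ranges over $\mathcal{K}$. By inspection of that proposition, this reduces to producing a uniform bound on $\sup_{\lambda \in W'_i} |D_X(\lambda)|$ for slightly enlarged neighborhoods $W_i \subseteq W'_i$ still compactly contained in $U(\tau_i)$, as $X$ varies in $\mathcal{K}$. To obtain it I would fix a basepoint $X_0 \in \mathcal{K}$ and invoke Thurston's formula (\ref{eq:Thu}), which gives, for every $X \in \mathcal{K}$ and every $\lambda \in \mf$,
\begin{equation*}
D_X(\lambda) = \sup_{\beta \in \mathcal{S}_g} \frac{i(\beta,\lambda)}{\ell_\beta(X_0)} \cdot \frac{\ell_\beta(X_0)}{\ell_\beta(X)} \leq D_{X_0}(\lambda) \cdot e^{d_\mathrm{Thu}(X,X_0)}.
\end{equation*}
Continuity of $D_{X_0}$ on each $U(\tau_i)$, a consequence of its convexity in train track coordinates (Theorem \ref{theo:d_convex}), implies $D_{X_0}$ is bounded on the relatively compact $W'_i$, while compactness of $\mathcal{K}$ ensures $e^{d_\mathrm{Thu}(X,X_0)}$ is uniformly bounded as $X$ ranges over $\mathcal{K}$, completing the argument.
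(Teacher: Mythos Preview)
Your proposal is correct and matches the paper's approach exactly: the paper does not write out a proof but simply instructs the reader to rerun the argument of Theorem \ref{theo:curr_lip} with Theorem \ref{theo:d_convex} substituted for Theorem \ref{theo:current_convex}, which is precisely what you do. Your explicit treatment of the uniformity in $X \in \mathcal{K}$ via Thurston's formula is a welcome addition that the paper leaves to the reader.
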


\subsection*{Counting with respect to Thurston's asymmetric metric.} We are now ready to discuss the main result of this section. Recall that $A_R(X) \subseteq \mathcal{T}_g$ denotes the closed ball of radius $R > 0$ centered at $X \in \mathcal{T}_g$ with respect to Thurston's asymmetric metric. More precisely,
\[
A_R(X) := \{Y \in \mathcal{T}_g \ | \ d_\mathrm{Thu}(X,Y) \leq R\}.
\]
Recall the definition of the constant $b_g > 0$ in (\ref{eq:bg}). Given $X \in \mathcal{T}_g$ denote by $\mathrm{Stab}(X) \subseteq \mcg$ its stabilizer with respect to the $\mcg$ action on $\mathcal{T}_g$ and consider the positive constants
\begin{gather*}
d(X) := \nu_{\mathrm{Thu}}\left(\{\lambda \in \mf \ | \ D_X(\lambda) \leq 1 \} \right),\\
c(X) := \nu_{\mathrm{Thu}}\left(\{\lambda \in \mf \ | \ \ell_{\lambda}(X) \leq 1 \} \right).
\end{gather*}

The tracking method introduced in the proof of Theorem \ref{theo:main} can also be used to prove a quantitative estimate with a power saving error term for the number of points in a mapping class group orbit of Teichmüller space that lie within a Thurston asymmetric metric ball of given center and large radius. Indeed, carefully following the proof of Theorem \ref{theo:main} but using Theorem \ref{theo:thurston_track} in place of Theorem \ref{theo:track} and Theorem \ref{theo:d_lip} in place of Theorem \ref{theo:curr_lip} yields the following more precise version of Theorem \ref{theo:main_3}.

\begin{theorem}
	\label{theo:main_3_strong}
	There exists a constant $\kappa = \kappa(g) > 0$ such that for every compact subset $\mathcal{K} \subseteq \mathcal{T}_g$, every pair of marked hyperbolic structures $X,Y \in \mathcal{K}$, and every $R > 0$,
	\[
	\#\left(\mcg \cdot Y \cap A_R(X)\right) = \frac{d(X) \cdot c(Y)}{|\mathrm{Stab}(Y)| \cdot b_g}\cdot e^{(6g-6)R} + O_{\mathcal{K}}\left(e^{(6g-6-\kappa)R} \right).
	\]
\end{theorem}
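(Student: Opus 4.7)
The plan is to mirror the proof of Theorem \ref{theo:main_strong} step by step, swapping each geometric intersection number ingredient for its Thurston asymmetric metric counterpart. First I would set $g(X,Y,R) := \#\{\mc \in \mcg \ | \ d_\mathrm{Thu}(X,\mc.Y) \leq R\}$, note that $g(X,Y,R) = |\mathrm{Stab}(Y)| \cdot \#(\mcg \cdot Y \cap A_R(X))$ since $\mathrm{Stab}(Y)$ is finite, and then localize into bisector counts
\[
g(X,Y,U,V,R) := \#\left\lbrace \mc \in \mcg \ \bigg| \
\begin{array}{l}
 d_\mathrm{Thu}(X,\mc.Y) \leq R, \\
 \mc.Y \in \mathrm{Sect}_U(X), \\
 \mc^{-1}.X \in \mathrm{Sect}_V(Y)
\end{array}
\right\rbrace
\]
for $U,V \subseteq \pmf$ non-empty $F$-simplices in a fixed set of Dehn-Thurston coordinates. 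Applying Theorem \ref{theo:thurston_track} (in its multiplicative form, proved analogously to Theorem \ref{theo:track_mult}) and discarding an $(X,Y,C,\kappa)$-sparse subset of mapping classes, the condition $d_\mathrm{Thu}(X,\mc.Y) \leq R$ becomes, up to a power saving error, $d_\mathcal{T}(X,\mc.Y) \leq R - \log D_X(\Re(q_s)) - \log \ell_{\Im(q_e)}(\mc.Y)$.

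Next I would rewrite the two correction terms in bisector form: since $\Re(q_s)$ has unit extremal length at $X$ we have $D_X(\Re(q_s)) = D_X(E_X([\Re(q_s)]))$, and using $\Im(q_e(X,\mc.Y)) = \mc.\Re(q_s(Y,\mc^{-1}.X))$ together with mapping class group invariance of hyperbolic length yields $\ell_{\Im(q_e)}(\mc.Y) = \ell_{E_Y([\Re(q_s(Y,\mc^{-1}.X))])}(Y)$. Thus when the bisector conditions place $[\Re(q_s)] \in U$ and $[\Re(q_s(Y,\mc^{-1}.X))] \in V$, both correction factors are almost constant: by Theorem \ref{theo:d_lip} combined with Theorem \ref{theo:ext_lip_2} the function $D_X \circ E_X$ is Lipschitz on $\Sigma^{3g-3}_u$, and by Theorems \ref{theo:curr_lip} and \ref{theo:ext_lip_2} applied to the Liouville current of $Y$ the function $\ell_{E_Y(\cdot)}(Y)$ is Lipschitz. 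Running the analogues of Propositions \ref{prop:up_2}, \ref{prop:low_2}, \ref{prop:total_2}, \ref{prop:total_DT}, \ref{prop:bound_1}, and \ref{prop:bound_2}, where the roles of $m(\alpha,X,U)$ and $m(\beta,Y,V)$ are played by the infima and suprema of $D_X \circ E_X$ on $U$ and of $\ell_{E_Y(\cdot)}(Y)$ on $V$, and invoking Theorem \ref{theo:bisect_count} on a Teichmüller ball of the shifted radius, produces a local estimate
\[
g(X,Y,U,V,R) = \frac{\nu_X(U) \cdot \nu_Y(V)}{b_g \cdot D_X(E_X([\lambda_U]))^{6g-6} \cdot \ell_{E_Y([\lambda_V])}(Y)^{6g-6}} \cdot e^{(6g-6)R} + O_\mathcal{K}\bigl( \cdots \bigr),
\]
with error of the same shape as in Theorem \ref{theo:count_loc}.

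Finally, I would sum this estimate over a partition of $\pmf$ into $O_g(\delta^{-(6g-7)})$ $F$-simplices of diameter $\leq \delta$ and optimize $\delta = e^{-\eta R}$, exactly as in the proof of Theorem \ref{theo:main_strong}. The telescoping Lipschitz sum identifies the leading constant as
\[
\sum_U \frac{\nu_X(U)}{D_X(E_X([\lambda_U]))^{6g-6}} \cdot \sum_V \frac{\nu_Y(V)}{\ell_{E_Y([\lambda_V])}(Y)^{6g-6}} \xrightarrow{\delta \to 0} d(X) \cdot c(Y),
\]
where the identities
\[
d(X) = \int_{\pmf} D_X(E_X([\lambda]))^{-(6g-6)} \, d\nu_X([\lambda]), \qquad c(Y) = \int_{\pmf} \ell_{E_Y([\lambda])}(Y)^{-(6g-6)} \, d\nu_Y([\lambda])
\]
follow from the scaling property $\nu_{\mathrm{Thu}}(t \cdot A) = t^{6g-6} \nu_{\mathrm{Thu}}(A)$ and a polar decomposition along level sets of $\SExt_X$ and $\SExt_Y$. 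Dividing by $|\mathrm{Stab}(Y)|$ yields the theorem. The main obstacle I anticipate is bookkeeping of the two multiplicative correction factors $D_X(\Re(q_s))$ and $\ell_{\Im(q_e)}(\mc.Y)$ simultaneously in the bisector estimate, since both must be bounded away from zero uniformly on $\mathcal{K}$ (which holds by compactness of $\pmf$ and $\mathcal{K}$ together with the fact that $D_X$ is positive on nonzero foliations and the Liouville current of $Y$ is filling) and must be shown Lipschitz in Dehn-Thurston coordinates; once that is secured, the rest of the argument is a direct transcription of the proofs in \S 5.
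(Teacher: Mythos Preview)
Your proposal is correct and matches the paper's approach exactly: the paper does not give a standalone proof but simply instructs the reader to follow the proof of Theorem~\ref{theo:main_strong} using Theorem~\ref{theo:thurston_track} in place of Theorem~\ref{theo:track} and Theorem~\ref{theo:d_lip} in place of Theorem~\ref{theo:curr_lip}, which is precisely the substitution scheme you spell out. Your additional care in handling the second correction factor $\ell_{\Im(q_e)}(\mc.Y)$ via the Liouville current of $Y$ and Theorem~\ref{theo:curr_lip} is the right move, and your identification of the leading constant through the scaling property of $\nu_{\mathrm{Thu}}$ is also correct.
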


%    Bibliographies can be prepared with BibTeX using amsplain,
%    amsalpha, or (for "historical" overviews) natbib style.

\bibliographystyle{amsalpha}

%    Insert the bibliography data here.

\bibliography{bibliography}

\end{document}